\documentclass[reqno]{amsart}
\usepackage{amssymb,latexsym,amsmath,amsthm,enumerate,amsbsy}
\usepackage[mathscr]{eucal}
\usepackage{framed,color,graphicx}
\usepackage{mathrsfs}%花体字母
\usepackage{cite}%文献连接
\usepackage[all]{xy}
\usepackage{tikz}
\usetikzlibrary{shadows, shadings, calc}

\makeatletter
\@namedef{subjclassname@2020}{\textup{2020} Mathematics Subject Classification}
\makeatother

\usetikzlibrary{positioning,decorations.pathreplacing,patterns,decorations.pathmorphing}
\tikzset{%
element/.style={draw, shape=circle, fill=white, inner sep=1.4pt}
}
\DeclareSymbolFont{bbold}{U}{bbold}{m}{n}
\DeclareSymbolFontAlphabet{\mathbbold}{bbold}

\theoremstyle{plain}
\newtheorem{theorem}{Theorem}[section]
\newtheorem{lemma}[theorem]{Lemma}
\newtheorem{corollary}[theorem]{Corollary}
\newtheorem{proposition}[theorem]{Proposition}

\theoremstyle{definition}

\newtheorem{remark}[theorem]{Remark}

\newcommand{\bp}{\mathbf{p}}
\newcommand{\bq}{\mathbf{q}}
\newcommand{\br}{\mathbf{r}}
\newcommand{\bs}{\mathbf{s}}
\newcommand{\bt}{\mathbf{t}}
\newcommand{\bu}{\mathbf{u}}

\begin{document}
\title[Nonfinitely based ai-semiring not SNFB, limit variety]
{A nonfinitely based additively idempotent semiring that is not strongly nonfinitely based and generates a limit variety}

\author{Xiaolei Shao}
\address{School of Mathematics, Northwest University, Xi'an, 710127, Shaanxi, P.R. China}
\email{xiaoleishao@yeah.net}

\author{Miaomiao Ren}
\address{School of Mathematics, Northwest University, Xi'an, 710127, Shaanxi, P.R. China}
\email{miaomiaoren@yeah.net}

\author{Zidong Gao}
\address{School of Mathematics, Northwest University, Xi'an, 710127, Shaanxi, P.R. China}
\email{zidonggao@yeah.net}

\subjclass[2020]{16Y60, 03C05, 08B15, 08B05}
\keywords{additively idempotent semiring, finite basis problem, strongly nonfinitely based, limit variety, subvariety lattice.}
%\thanks{Miaomiao Ren, corresponding author, is supported by National Natural Science Foundation of China (12371024, 12571020).}

\begin{abstract}
We present an explicit infinite equational basis for the six-element additively idempotent semiring $TR_6$
and prove that $TR_6$ is nonfinitely based.
We also give a complete description of the subvariety lattice of the variety generated by $TR_6$,
showing that it forms a four-element chain.
Our results demonstrate that the variety generated by $TR_6$ is a limit variety: it is itself nonfinitely based,
yet all of its proper subvarieties are finitely based.
This provides a new limit variety of additively idempotent semirings, distinct from all previously known ones.
In fact, $\mathsf{V}(TR_6)$ is the first explicit limit subvariety of the variety generated by the max-plus algebra $\mathbf{N}$.
Moreover, $TR_6$ is not strongly nonfinitely based: it belongs to a finitely based variety generated by a finite additively idempotent semiring.
Together with the six-element additively idempotent semiring $SR_6$,
these are the first two finite additively idempotent semirings that are nonfinitely based but not strongly nonfinitely based.
Finally, we study the variety generated by $SR_6$ and $TR_6$,
showing that it is nonfinitely based and has exactly nine subvarieties,
four of which are nonfinitely based and the remaining five are finitely based.
\end{abstract}

\maketitle

\section{Introduction}
A \emph{variety} is a class of algebras closed under taking subalgebras, homomorphic images, and arbitrary direct products.
By Birkhoff's celebrated theorem, a class of algebras is a variety if and only if it is an \emph{equational class};
that is, the class of all algebras satisfying a certain set of identities; such a set is an \emph{equational basis} of the variety.
A variety is \emph{finitely based} if it admits a finite equational basis; otherwise, it is \emph{nonfinitely based}.
An algebra $A$ is called finitely based (resp., nonfinitely based)
if the variety $\mathsf{V}(A)$ it generates is finitely based (resp., nonfinitely based).
If $\Sigma$ is an equational basis of the variety $\mathsf{V}(A)$,
we also say that $\Sigma$ is an equational basis of $A$.

The \emph{finite basis problem} for a class of algebras, one of the central problems in universal algebra,
concerns the classification of its members according to whether they are finitely based.
This problem has been intensively studied for groups, rings, semigroups, and semirings; see, for example,
\cite{neumann11, lee23, Volkov2001, JacksonRenZhao2022, Sapir2014}.

A variety is \emph{hereditarily finitely based} if all of its subvarieties are finitely based.
A variety is a \emph{limit variety} if it is itself nonfinitely based, but every proper subvariety of it is finitely based.
Equivalently, limit varieties are precisely the minimal nonfinitely based varieties.
By Zorn's lemma, every nonfinitely based variety contains a limit variety;
consequently, a variety is hereditarily finitely based if and only if it contains no limit subvarieties.
Thus the classification of hereditarily finitely based varieties is essentially equivalent to the classification of limit varieties.

The study of limit varieties is of intrinsic interest, as observed by Kharlampovich and Sapir~\cite{KharlampovichSapir1995},
who noted that problems concerning the boundary between finitely and nonfinitely based varieties are particularly appealing.
Nonetheless, limit varieties are notoriously difficult to find.
Lee and Volkov~\cite{LeeVolkov2011} emphasized that classifying limit varieties presents a number of challenges,
and that even producing a single concrete example can be a difficult task.

To date, no explicit example of a limit variety of groups has been found,
although the existence of infinitely many such varieties was established in \cite{Newman1971}.
This remains one of the central open problems in the theory of group varieties.
Within semigroups, however, several explicit examples are known (see~\cite{GusevSapir2022, Jackson2005, gulz, OSapir2023}).
In this paper, we present a new limit variety of additively idempotent semirings.

An \emph{additively idempotent semiring} (or an \emph{ai-semiring}, for short) is
an algebra $(S,+,\cdot)$ such that the additive reduct $(S,+)$ is a commutative idempotent semigroup,
the multiplicative reduct $(S,\cdot)$ is a semigroup, and the distributive laws
\[
x(y+z)\approx xy+xz,\qquad (x+y)z\approx xz+yz
\]
hold. The set of nonnegative integers, with $\max$ as addition and ordinary addition as multiplication,
forms an ai-semiring, known as the max-plus algebra restricted to the nonnegative integers.
We denote this ai-semiring by $\mathbf{N}$.
Such algebras arise naturally in various areas of mathematics and have important applications in
algebraic geometry, tropical geometry, information science, and theoretical computer science; see \cite{ConnesConsani2021, MaclaganSturmfels2015, Glazek2001, Golan1992}.

Let $S$ be an ai-semiring. The relation $\leq$ on $S$ given by
\[
a \leq b \Leftrightarrow a+b=b
\]
is a partial order; moreover, $(S, \leq)$ is an upper semilattice, with $a+b$ serving as the supremum of $a$ and $b$.
It is also compatible with multiplication, and for this reason ai-semirings are often called \emph{semilattice-ordered semigroups}.
Unless otherwise stated, all order-theoretic statements refer to this order.

Over the past two decades or so, the finite basis problem for ai-semirings has seen considerable progress; see \cite{aei, Dolinka2007, JacksonRenZhao2022, Jackson2008, ZhaoEtAl2020, GaoRen2025, RenLiuZengChen2025, RenLiuYueChen2026, YueRenZengShao2025, WuRenZhao2024, Shaprynskii2024, GhoshPastijnZhao2005,Pastijn2005,RenZhao2016,RenZhaoWang2017,DolinkaGusevVolkov2025, RenJacksonZhaoLei2023, yrg, yr2601}.
In particular, Aceto et al.~\cite{aei} showed that the max-plus algebra is nonfinitely based.
Dolinka~\cite{Dolinka2007} found the first example of a nonfinitely based finite ai-semiring,
while Jackson~\cite{Jackson2008} subsequently presented infinitely many such examples.
Pastijn et al.~\cite{GhoshPastijnZhao2005, Pastijn2005} and
Ren et al.~\cite{RenZhao2016, RenZhaoWang2017} showed that every ai-semiring satisfying the identity $x^3 \approx x$ is finitely based.
Dolinka et al.~\cite{DolinkaGusevVolkov2025}
solved the finite basis problem for the endomorphism semirings of finite semilattices.
Jackson et al.~\cite{JacksonRenZhao2022} and Zhao et al.~\cite{ZhaoEtAl2020} completed the classification of
ai-semirings of order at most three with respect to the finite basis property.
Recently, the finite basis problem for four-element ai-semirings has been investigated in \cite{RenLiuZengChen2025, RenLiuYueChen2026, YueRenZengShao2025, WuRenZhao2024, Shaprynskii2024, yr2601, yrg}.

In contrast, the study of limit varieties of ai-semirings has progressed more slowly;
it was not until recently that substantial advances were made.
Ren et al.~\cite{RenJacksonZhaoLei2023} provided the first explicit examples of limit varieties of ai-semirings.
They gave a complete characterization of limit varieties generated by flat extensions of finite groups, yielding an infinite family of such varieties, each generated by a finite flat semiring (for more on flat semirings, see~\cite{Jackson2008}).
In addition to this infinite family, they exhibited several other examples: an ad hoc example generated by a finite flat semiring, a continuum-sized family with no finite generator, and two further ad hoc examples.
Another example was later given by Gao and Ren~\cite{GaoRen2025},
who constructed a limit variety generated by all acyclic graph semirings; this variety is not finitely generated.

Very recently, Lyu et al.~\cite{LyuRenYue2026} showed that the variety generated by the six-element commutative ai-semiring $SR_6$ is a limit variety (see Table~\ref{tab:SR6} for its Cayley tables).
More precisely, they established a sufficient condition for an ai-semiring to be nonfinitely based,
and applied it to prove that $SR_6$ is nonfinitely based, providing an explicit infinite equational basis for it.
They also described the subvariety lattice of $\mathsf{V}(SR_6)$ in full detail, showing that it forms a four-element chain; in particular, every proper subvariety of $\mathsf{V}(SR_6)$ is finitely based.
Thus the variety $\mathsf{V}(SR_6)$ is a limit variety with a completely described subvariety lattice.

\begin{table}[htbp]
\centering
\caption{The Cayley tables of $SR_6$}\label{tab:SR6}
\begin{tabular}{c|cccccc}
$+$               &1 & 2 & 3 & 4 & 5 & 6\\
\hline
                 1& 1 & 1 & 1 & 1 & 1 & 1 \\
                 2&1 & 2 & 1 & 1 & 2 & 1\\
                 3&1 & 1 & 3 & 1 & 1 & 1\\
                 4&1 & 1 & 1 & 4 & 1 & 4\\
                 5&1 & 2 & 1 & 1 & 5 & 1\\
                 6&1 & 1 & 1 & 4 & 1 & 6
\end{tabular}
\qquad\qquad
\begin{tabular}{c|cccccc}
$\cdot$               &1 & 2 & 3 & 4 & 5 & 6\\
\hline
                   1&1 & 1 & 1 & 1 & 1 & 1 \\
                   2&1 & 1 & 1 & 1 & 1 & 3\\
                   3&1 & 1 & 1 & 1 & 1 & 1\\
                   4&1 & 1 & 1 & 1 & 3 & 1\\
                   5&1 & 1 & 1 & 3 & 1 & 3\\
                   6&1 & 3 & 1 & 1 & 3 & 1
\end{tabular}
\end{table}

A finite ai-semiring is \emph{inherently nonfinitely based} (INFB) if it belongs to no finitely based locally finite variety;
that is, every locally finite variety containing it is nonfinitely based.
A finite ai-semiring is \emph{strongly nonfinitely based} (SNFB) if
it belongs to no finitely based finitely generated variety;
that is, every finitely generated variety containing it is nonfinitely based.

It is immediate from the definitions that every INFB ai-semiring is SNFB, and every SNFB ai-semiring is nonfinitely based.
The converse of the first implication does not hold: a SNFB ai-semiring need not be INFB (see \cite[Example~6.7]{JacksonRenZhao2022}).
Moreover, the converse of the second implication also fails: a nonfinitely based finite ai-semiring need not be SNFB.
Prior to this paper, however, no such example was known.
Although Lyu et al.~\cite{LyuRenYue2026} showed that $SR_6$ generates a limit variety,
they did not establish whether $SR_6$ is SNFB.
Here we prove that it is not, and we also provide a second such example in Theorem~\ref{thm:MR6}.
We now give the first such example.

\begin{theorem}
The ai-semiring $SR_6$ is nonfinitely based, but not strongly nonfinitely based.
\end{theorem}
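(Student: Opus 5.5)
The first half of the statement needs no new work: that $SR_6$ is nonfinitely based is the main result of Lyu et al.~\cite{LyuRenYue2026}, which we simply cite. What remains is to show that $SR_6$ is not SNFB. For this we will exhibit a finite ai-semiring $B$ such that $SR_6\in\mathsf{V}(B)$ and $\mathsf{V}(B)$ is finitely based. Then $SR_6$ lies in a finitely based, finitely generated variety, which is exactly what fails for SNFB algebras.

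The natural candidates for $B$ are finite ai-semirings whose varieties are already known to be finitely based and large. Examples are the flat extensions of small groups or the small ai-semirings classified in \cite{JacksonRenZhao2022, ZhaoEtAl2020}. More promising still is a finite ai-semiring $B$ satisfying $x^3\approx x$, since by \cite{GhoshPastijnZhao2005, Pastijn2005, RenZhao2016, RenZhaoWang2017} every such ai-semiring is finitely based. However, $SR_6$ is nilpotent-like (all products of three elements equal $1$), so it does not satisfy $x^3\approx x$, and that route fails.

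Instead I would take $B$ to be a finite semiring that generates a variety with an easily described finite basis containing $xyz\approx uvw$-type identities. A concrete choice is a finite ai-semiring $B$ that generates the variety defined by $xyz\approx x_1y_1z_1$, $xy\approx yx$ together with the ai-semiring axioms. This variety is a variety of ``3-nilpotent'' commutative ai-semirings and is finitely based by definition.

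The key steps are as follows.
\begin{enumerate}
\item Verify that $SR_6$ satisfies $xyz\approx uvw$, i.e.\ every product of length three equals the absorbing element $1$. This is checked directly from the Cayley table: every product is $1$ or $3$, and $3$ multiplied by anything gives $1$. Commutativity of multiplication is also read off the table.
\item Let $\mathbf{W}$ be the variety defined by the ai-semiring axioms, $xy\approx yx$, and $xyz\approx uvw$. Show that $\mathbf{W}$ is locally finite. Modulo these identities, every term is a finite sum of variables and products of two variables, so $\mathbf{W}$ is locally finite. It is not obviously finitely generated, however.
\item Show that $\mathbf{W}$ is generated by a single finite member $B$. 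For this I would construct $B$ as a suitable finite product or subdirect combination of small 3-nilpotent ai-semirings. I would then prove that any identity $\mathbf{u}\approx\mathbf{v}$ failing in $\mathbf{W}$ fails in $B$, by analysing normal forms: sums of sets of linear and quadratic words, plus possibly the zero term.
\end{enumerate}

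The main obstacle is step~3. The variety $\mathbf{W}$ itself may well not be finitely generated, and a naive choice could be nonfinitely based. Indeed, $SR_6$ itself lies in $\mathbf{W}$ and is NFB, so $\mathbf{W}$ cannot be generated by $SR_6$. I would first attempt to describe the identities of $\mathbf{W}$ explicitly via normal forms. If the free objects of $\mathbf{W}$ on $n$ generators cannot be separated uniformly by a fixed finite algebra, I would fall back to the variant that the paper will presumably use: a larger finite $B$, built from $SR_6$ by adjoining elements, whose identities are pinned down by the finite basis above, so that $\mathsf{V}(B)=\mathbf{W}$.
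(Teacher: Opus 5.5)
\textbf{Verdict: genuine gap, but a fillable one.} Your reduction is sound as far as it goes. $SR_6$ is commutative and satisfies $xyz\approx uvw$ (every product lies in $\{1,3\}$, and $3$ is annihilated by everything), so $SR_6\in\mathbf W$. Also, $\mathbf W$ is finitely based by definition. But the whole content of ``not SNFB'' is that this finitely based variety is generated by a \emph{finite} algebra. That is exactly your step~3, and you leave it open. You even suggest that $\mathbf W$ ``may well not be finitely generated'' and fall back on an unspecified $B$ obtained by adjoining elements to $SR_6$. That is also not what the paper does: it places $SR_6$ in $\mathsf{V}(S_{(4,413)})$, a four-element algebra with a known five-identity basis, and checks those five identities in $SR_6$. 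As written, your proposal proves nothing beyond the cited nonfinite basedness.

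\textbf{The missing idea.} Your doubt is unfounded, and the fix is short: $\mathbf W$ is generated by its $3$-generated free algebra.
\begin{enumerate}
\item Let $N$ be the commutative semigroup of words of length $1$ or $2$ over $\{a,b,c\}$, together with a zero $0$, where every product of length $\geq 3$ is set to $0$. Let $B=P_f(N)$, with union as addition and elementwise product as multiplication.
\item Then $B$ is a finite commutative ai-semiring with $2^{10}-1$ elements, and every triple product in $B$ equals $\{0\}$. Hence $B\in\mathbf W$.
\item Let $\bq\preceq\bu$, with $\bq$ a word, be an inequality that does not follow from the identities of $\mathbf W$. Then $\bq\notin\bu$, and if $\ell(\bq)\geq 3$ then $\bu$ contains no word of length $\geq 3$.
\item Define a homomorphism $\varphi$ into $B$. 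If $\ell(\bq)\geq 3$, send every variable to $\{a\}$. Otherwise write $\bq$ as $x$, $x^2$ or $xy$ with $x\neq y$, and send $x\mapsto\{a\}$, $y\mapsto\{b\}$, and every other variable to $\{c\}$.
\item Then $\varphi(\bq)$ is a singleton $\{w\}$, and no word of $\bu$ is mapped to $w$. For instance, when $\bq=xy$ the words of $\bu$ land in $\{a,b,c,a^2,b^2,c^2,ac,bc,0\}$, which does not contain $ab$. The other cases are checked the same way.
\item Since the order on $B$ is inclusion, $\varphi(\bq)\not\leq\varphi(\bu)$. So every such inequality fails in $B$.
\end{enumerate}
Hence $\mathsf{V}(B)=\mathbf W$, which is a finitely based, finitely generated variety containing $SR_6$.

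\textbf{Comparison with the paper.} Completed this way, your route differs from the paper's. It is self-contained, rather than relying on the external finite basis for $S_{(4,413)}$. It is also uniform: every finite commutative ai-semiring satisfying $xyz\approx uvw$ lies in $\mathbf W$, including $TR_6$ (which has the same multiplicative reduct). The paper, by contrast, needs a separate generator $S_{53}$ for $TR_6$. The price of your route is a large generator in place of a four-element one.
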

\begin{proof}
By \cite[Corollary 4.7]{LyuRenYue2026}, $SR_6$ is nonfinitely based.
It remains to show that $SR_6$ is not strongly nonfinitely based.
Let $S_{(4, 413)}$ denote the four-element ai-semiring whose Cayley tables are given in Table~\ref{S4-413}.
By \cite[Proposition~5.4]{YueRenZengShao2025},
the ai-semiring variety $\mathsf{V}(S_{(4, 413)})$ is defined by the following five identities:
\[
xy \approx yx,\quad x\preceq x^2,\quad x+xy\approx x^2+xy,\quad x_4\preceq x_1x_2x_3,\quad xy \preceq x^2+yz.
\]
It is routine to check that $SR_6$ satisfies these identities,
and so $SR_6$ belongs to the finitely based ai-semiring variety $\mathsf{V}(S_{(4, 413)})$, which is finitely generated.
Therefore, $SR_6$ is not strongly nonfinitely based.
\end{proof}

\begin{table}[htbp]
\centering
\small
\caption{The Cayley tables of \(S_{(4,413)}\)}\label{S4-413}
\renewcommand{\arraystretch}{1.08}
\setlength{\tabcolsep}{4.8pt}
\begin{tabular}{c|cccc}
$+$ & 1 & 2 & 3 & 4 \\ \hline
1 & 1 & 1 & 1 & 1 \\
2 & 1 & 2 & 3 & 4 \\
3 & 1 & 3 & 3 & 1 \\
4 & 1 & 4 & 1 & 4
\end{tabular}
\qquad\qquad
\begin{tabular}{c|cccc}
$\cdot$ & 1 & 2 & 3 & 4 \\ \hline
1 & 1 & 1 & 1 & 1 \\
2 & 1 & 3 & 1 & 3 \\
3 & 1 & 1 & 1 & 1 \\
4 & 1 & 3 & 1 & 1
\end{tabular}
\end{table}

\begin{table}[htbp]
\centering
\caption{The Cayley tables of $TR_6$}\label{cayleyMR6}
\small
\renewcommand{\arraystretch}{1.08}
\setlength{\tabcolsep}{4.8pt}
\begin{tabular}{c|cccccc}
$+$&1&2&3&4&5&6\\ \hline
1&1&1&1&1&1&1\\
2&1&2&3&3&2&3\\
3&1&3&3&3&3&3\\
4&1&3&3&4&3&4\\
5&1&2&3&3&5&3\\
6&1&3&3&4&3&6
\end{tabular}
\hspace{1.2cm}
\begin{tabular}{c|cccccc}
$\cdot$&1&2&3&4&5&6\\ \hline
1&1&1&1&1&1&1\\
2&1&1&1&1&1&3\\
3&1&1&1&1&1&1\\
4&1&1&1&1&3&1\\
5&1&1&1&3&1&3\\
6&1&3&1&1&3&1
\end{tabular}
\end{table}

Let $TR_6$ denote the six-element commutative ai-semiring $\{1,2,3,4,5,6\}$ whose Cayley tables are given in Table~\ref{cayleyMR6}.
Then $TR_6$ and $SR_6$ share the same multiplicative reduct, but have different additive reducts.
In both of them, the element $1$ is the multiplicative zero element and the additive maximum element.
These similarities, however, do not imply any inclusion between the varieties they generate.
For instance, the inequality $x \preceq xy$ is satisfied by $TR_6$ but not by $SR_6$, while the identity $x^2\approx x+xy$ holds in $SR_6$ but fails in $TR_6$.
Thus the varieties $\mathsf{V}(TR_6)$ and $\mathsf{V}(SR_6)$ are incomparable,
but they have the same finite basis property.
We now present the second example of a finite ai-semiring that is nonfinitely based but not SNFB.

\begin{theorem}\label{thm:MR6}
The ai-semiring $TR_6$ is nonfinitely based, but not strongly nonfinitely based.
\end{theorem}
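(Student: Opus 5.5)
Theorem~\ref{thm:MR6} has two independent parts, and I would prove them separately: the easy half first, then the hard half.

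\textbf{Not SNFB.} I would copy the proof for $SR_6$ and show that $TR_6$ lies in a finitely based, finitely generated variety. The natural candidate is again $\mathsf{V}(S_{(4,413)})$, which \cite[Proposition~5.4]{YueRenZengShao2025} shows is defined by
\[
xy \approx yx,\quad x\preceq x^2,\quad x+xy\approx x^2+xy,\quad x_4\preceq x_1x_2x_3,\quad xy \preceq x^2+yz.
\]
I would check these in $TR_6$ directly from Table~\ref{cayleyMR6}:
\begin{itemize}
\item Commutativity is visible from the multiplication table.
\item Every product of three elements is $1$, the additive top, so $x_4\preceq x_1x_2x_3$ holds.
\item Every square is $1$, so $x\preceq x^2$ holds.
\item $x+xy\approx x^2+xy$ holds because $x+xy\le x+1=1$ and $x^2+xy=1$.
\item $xy\preceq x^2+yz$ holds because the right side equals $1$.
\end{itemize}
Hence $TR_6\in\mathsf{V}(S_{(4,413)})$. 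If for some reason this failed, I would instead search the four-element ai-semirings with known finite bases for one whose variety contains $TR_6$. The observation that $x\preceq xy$ holds in $TR_6$ also suggests using a flat-type generator.

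\textbf{Nonfinitely based.} I would use the sufficient condition of Lyu et al.~\cite{LyuRenYue2026}, which was applied there to $SR_6$. The plan has three steps.
\begin{enumerate}
\item For each $n$, write down an explicit identity $\sigma_n$ in about $n$ variables that holds in $TR_6$. It should be modelled on the $SR_6$ family but adapted to the different additive order. The obvious source is the additive structure: $2+4=3$, with $2,5$ below $3$ and $4,6$ below $3$, and the products $2\cdot6=4\cdot5=5\cdot6=3$ all other products equal to $1$. So $\sigma_n$ would be something like a cyclic sum $x_1x_2+x_2x_3+\cdots+x_nx_1$ absorbing a further word.
\item Verify $TR_6\models\sigma_n$. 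A product of variables lands on $3$ only for specific pairs drawn from $\{2,4,5,6\}$, and otherwise gives $1$. This makes the verification a parity or cycle-colouring argument over assignments into $\{2,4,5,6\}$.
\item Show that $\sigma_n$ is not derivable from the identities of $TR_6$ in fewer than $n$ variables. Here I would construct, for each $n$, an ai-semiring $S_n$ (a Rees-quotient-type or flat-type semiring built from words of $\sigma_n$) whose $(n-1)$-generated subalgebras lie in $\mathsf{V}(TR_6)$ but which fails $\sigma_n$. Equivalently, I would check the hypotheses of the Lyu et al.\ criterion.
\end{enumerate}

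The main obstacle is step~3, together with choosing $\sigma_n$ correctly. Proving that every $(n-1)$-generated subalgebra of $S_n$ belongs to $\mathsf{V}(TR_6)$ needs separating homomorphisms into $TR_6$. I would build these by deleting one variable from the cycle and colouring the remaining path alternately with the pairs $(2,6)$ and $(5,4)$. The new additive order, in which $2$ and $4$ join to $3$ rather than $1$, is exactly what breaks the $SR_6$ argument. So this colouring would need to be rechecked against sums and not only products.
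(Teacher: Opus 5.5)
Your not-SNFB argument fails at the third identity, because $TR_6$ does \emph{not} satisfy $x+xy\approx x^2+xy$. The estimate $x+xy\le x+1=1$ only gives an upper bound, not equality. In fact $TR_6$ satisfies $x\preceq xy$, so $x+xy=xy$, and this equals $3$ whenever $xy=3$. For instance, $x=2$, $y=6$ gives $x+xy=2+3=3$ but $x^2+xy=1+3=1$. The other four identities do hold. So $TR_6\notin\mathsf{V}(S_{(4,413)})$, and the $SR_6$ argument cannot be copied; this matches the paper's remark that $x^2\approx x+xy$ holds in $SR_6$ but fails in $TR_6$. Your fallback (``search the four-element ai-semirings'') is not an argument. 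The paper instead uses the three-element $S_{53}$, whose variety is defined by $xy\approx yx$, $xy+y^2\approx x+y^2$, $x\preceq xy$, $xyz\approx xy+yz+xz$. These hold in $TR_6$ for three reasons. Squares and triple products equal the top element $1$. The inequality $a\le ab$ can be read off Table~\ref{cayleyMR6}. Finally, $xy+yz+zx$ always evaluates to $1$, since the pairs with product $3$ form a path on $\{2,4,5,6\}$, which has no closed walk of length $3$.

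Your nonfinite-basis half is a plan, not a proof. The family $\sigma_n$ is left unspecified. It has to be $x\preceq x_1x_2+\cdots+x_{2n}x_{2n+1}+x_{2n+1}x_1$, with an \emph{odd} cycle and a fresh variable $x$. For an even cycle, colouring alternately by $5,6$ and sending $x$ to $1$ refutes it. More seriously, step~3, which you call the main obstacle, is not carried out: no $S_n$ is built, and the hypotheses of the cited criterion are neither stated nor checked. Without an explicit basis, controlling \emph{all} identities of $TR_6$ in fewer variables would in any case need a description of the inequalities of $TR_6$.

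The paper supplies exactly this description. Proposition~\ref{wordprop} shows that a nontrivial $\bq\preceq\bu$ holds in $TR_6$ iff one of the following holds: $\bu$ contains a word of length at least $3$; $\bu$ contains a non-linear word; $\mathbb{G}_{\bu}$ contains an odd cycle; or $\bq$ is a single variable in $c(\bu)$. The converse direction uses the assignments $A\mapsto5$, $B\mapsto6$, and then $x\mapsto2$, $y\mapsto4$. This gives the basis $x\preceq y^2$, $x\preceq xy$, $x^2\approx x_1x_2x_3$, $\sigma_n$ $(n\ge1)$. By compactness it then suffices to show that $\sigma_n$ does not follow from the first three together with $\sigma_k$, $k<n$. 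Since $\mathbf{u}^{(n)}$ is free for $x^2$, $x_1x_2x_3$ and each $\mathbf{u}^{(k)}$ with $k<n$, every derivation step starting from $\mathbf{u}^{(n)}$ can only apply $xy\approx xy+x$, in either direction. An induction shows that each term stays of the form $\mathbf{u}^{(n)}+\mathbf{w}$, with $\mathbf{w}$ a sum of variables from $c(\mathbf{u}^{(n)})$. Hence $\mathbf{u}^{(n)}+x$ is never reached. This short syntactic argument replaces your unconstructed $S_n$ entirely.
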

\begin{proof}
We first show that $TR_6$ is not strongly nonfinitely based;
its nonfinite basis property will be established in Proposition~\ref{nfb}.
Let $S_{53}$ denote the three-element ai-semiring whose Cayley tables are given in Table~\ref{tb5301}.
By \cite[Proposition 7]{ZhaoEtAl2020}, the variety $\mathsf{V}(S_{53})$ is defined by the following four identities:
\[
xy \approx yx, \quad xy+y^2 \approx x+y^2, \quad x\preceq xy, \quad xyz\approx xy+yz+xz.
\]
It is easy to check that these identities hold in $TR_6$,
so $TR_6$ belongs to the finitely based ai-semiring variety $\mathsf{V}(S_{53})$, which is finitely generated.
Therefore, $TR_6$ is not strongly nonfinitely based.
\end{proof}

\begin{table}[ht]
\caption{The Cayley tables of $S_{53}$} \label{tb5301}
\begin{tabular}{c|ccc}
$+$      &$1$ &$2$ &$3$\\
\hline
$1$      &$1$ &$1$ &$3$\\
$2$      &$1$ &$2$ &$3$\\
$3$      &$3$ &$3$ &$3$\\
\end{tabular}\qquad
\begin{tabular}{c|ccc}
$\cdot$      &$1$ &$2$ &$3$\\
\hline
$1$      &$3$ &$1$ &$3$\\
$2$      &$1$ &$2$ &$3$\\
$3$      &$3$ &$3$ &$3$\\
\end{tabular}
\end{table}

We shall show that $\mathsf{V}(TR_6)$ is a new limit variety, distinct from all previously known ones.
Like $\mathsf{V}(SR_6)$, its subvariety lattice forms a four-element chain;
the bottom two subvarieties coincide with those of $\mathsf{V}(SR_6)$, while the upper parts are different.

The max-plus algebra $\mathbf{N}$ is a canonical infinite ai-semiring.
It is nonfinitely based \cite[Theorem~4.1]{aei}, and
the variety $\mathsf{V}(\mathbf{N})$ has uncountably many subvarieties \cite[Proposition~4.8]{Gao2026},
yet its subvarieties remain largely unexplored.
In particular, no explicit limit subvariety of $\mathsf{V}(\mathbf{N})$ was known before this work.
We shall show that $TR_6$ lies in $\mathsf{V}(\mathbf{N})$,
and so $\mathsf{V}(TR_6)$ is the first explicit limit subvariety of $\mathsf{V}(\mathbf{N})$.
This addresses an open direction of Ren et al.~\cite[Section~6]{RenJacksonZhaoLei2023},
who raised the question of exploring subvarieties of $\mathsf{V}(\mathbf{N})$ in search of limit varieties.

Thus $SR_6$ and $TR_6$ share several striking features: both are limit varieties with four-element subvariety lattices,
and both are the first examples of finite ai-semirings that are nonfinitely based but not SNFB.
Given these parallels, it is natural to study the variety they generate together.
We shall do so in detail, completely classifying its subvarieties with respect to the finite basis property.

The paper is organized as follows.
In Section~2, we collect the necessary preliminaries.
In Section~3, we provide an explicit infinite equational basis for $TR_6$ and prove that it is nonfinitely based.
In Section~4, we describe the subvariety lattice of $\mathsf{V}(TR_6)$ and show that $\mathsf{V}(TR_6)$ is a limit variety.
In Section~5, we provide an explicit infinite equational basis for the variety $\mathsf{V}(SR_6, TR_6)$ generated by $SR_6$ and $TR_6$
and prove that it is nonfinitely based.
In Section~6, we describe the subvariety lattice of $\mathsf{V}(SR_6, TR_6)$.
Finally, Section~7 concludes the paper with some remarks and open problems.

\section{Preliminaries}
In this section, we introduce the basic terminology, notation, and tools that will be used throughout the paper.
Since our main object of study, $TR_6$, is commutative, we focus primarily on notions and facts related to commutative ai-semirings.
Much of the material is drawn from the preliminary sections of \cite{LyuRenYue2026, yrg}.

Let $X$ be a countably infinite set of variables, and let $X_c^+$ denote the free commutative semigroup over $X$.
Thus $X_c^+$ consists of all nonempty words over $X$, with two words identified
if one can be obtained from the other by permuting the variables.
Equivalently, a word is determined up to the multiplicities of its variables.
We also write $X_c^*$ for the free commutative monoid over $X$.

A \emph{commutative ai-semiring term} (or simply a \emph{term}) over $X$ is defined as a finite nonempty set of words in $X_c^+$. In the sequel, terms are denoted by bold lowercase letters $\mathbf{u},\mathbf{v},\mathbf{w},\ldots$, while ordinary lowercase letters $x,y,z,\ldots$ represent variables. A term is represented as a formal sum of its elements. Specifically,
\(
\mathbf{w}=\mathbf{u}_{1}+\mathbf{u}_{2}+\cdots+\mathbf{u}_{n}
\)
indicates that $\mathbf{w}=\{\mathbf{u}_{1},\mathbf{u}_{2},\ldots,\mathbf{u}_{n}\}$. The order of the summands in the formal sum is irrelevant, and multiple occurrences of the same word are collapsed into a single occurrence. Two terms are equal if and only if their underlying sets coincide.

Let $P_f(X_c^+)$ denote the set of all terms over $X$.
This set forms a commutative ai-semiring with union as addition and elementwise product as multiplication.
By \cite[Theorem~2.5]{KurilPolak2005},
$P_f(X_c^+)$ is the free commutative ai-semiring over $X$ in the variety of all commutative ai-semirings.
A \emph{commutative ai-semiring substitution} (or simply a \emph{substitution})
is a semiring homomorphism from $P_f(X_c^+)$ to itself.

Let $\mathbf{u}$ and $\mathbf{v}$ be terms. We say that $\mathbf{u}$ is a \emph{subterm} of $\mathbf{v}$ if there exist terms $\mathbf{p}_1$ and $\mathbf{p}_2$ such that
\[
\mathbf{v}=\mathbf{p}_1\mathbf{u}+\mathbf{p}_2.
\]
Here $\mathbf{p}_1$ may be the empty word (acting as the multiplicative identity),
and $\mathbf{p}_2$ may be the empty set (acting as the additive identity).
In particular, if $\mathbf{p}_1$ is empty, then $\mathbf{u}$ is an \emph{additive subterm} of $\mathbf{v}$.
The term $\mathbf{v}$ is called $\mathbf{u}$-\emph{free} if for every substitution $\varphi$,
the term $\varphi(\mathbf{u})$ is not a subterm of $\mathbf{v}$.

\begin{lemma}\label{freelemma}
Let $\mathbf{u}$, $\mathbf{v}$, and $\mathbf{w}$ be terms.
If $\mathbf{v}$ is $\mathbf{u}$-free and $\mathbf{u}$ is a subterm of $\mathbf{w}$,
then $\mathbf{v}$ is also $\mathbf{w}$-free.
\end{lemma}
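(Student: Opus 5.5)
We argue by contraposition. Suppose $\mathbf{v}$ is not $\mathbf{w}$-free; we show that $\mathbf{v}$ is not $\mathbf{u}$-free.

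Since $\mathbf{v}$ is not $\mathbf{w}$-free, there is a substitution $\varphi$ such that $\varphi(\mathbf{w})$ is a subterm of $\mathbf{v}$. So we can write
\[
\mathbf{v}=\mathbf{p}_1\varphi(\mathbf{w})+\mathbf{p}_2,
\]
where $\mathbf{p}_1$ is a term or the empty word, and $\mathbf{p}_2$ is a term or the empty set. Since $\mathbf{u}$ is a subterm of $\mathbf{w}$, we also have
\[
\mathbf{w}=\mathbf{q}_1\mathbf{u}+\mathbf{q}_2,
\]
with the same conventions for $\mathbf{q}_1$ and $\mathbf{q}_2$.

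Next we apply $\varphi$, which is a semiring homomorphism of $P_f(X_c^+)$. This gives
\[
\varphi(\mathbf{w})=\varphi(\mathbf{q}_1)\varphi(\mathbf{u})+\varphi(\mathbf{q}_2).
\]
Here we interpret $\varphi(\text{empty word})$ as the empty word and $\varphi(\emptyset)$ as $\emptyset$. This matches the conventions for subterms and is the only point requiring care: a homomorphism from $P_f(X_c^+)$ is only defined on genuine terms, so the degenerate cases are handled by this convention.

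Substituting into the expression for $\mathbf{v}$ and distributing, we get
\[
\mathbf{v}=\bigl(\mathbf{p}_1\varphi(\mathbf{q}_1)\bigr)\varphi(\mathbf{u})+\bigl(\mathbf{p}_1\varphi(\mathbf{q}_2)+\mathbf{p}_2\bigr).
\]
The product $\mathbf{p}_1\varphi(\mathbf{q}_1)$ is a term, or the empty word if both factors are empty. The sum $\mathbf{p}_1\varphi(\mathbf{q}_2)+\mathbf{p}_2$ is a term, or empty if both summands are empty; note that $\mathbf{p}_1\emptyset=\emptyset$. Therefore $\varphi(\mathbf{u})$ is a subterm of $\mathbf{v}$, so $\mathbf{v}$ is not $\mathbf{u}$-free. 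This contradiction proves the lemma.

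There is no real obstacle in this argument. The only care needed is the bookkeeping of the empty-word and empty-set cases, so that the composed decomposition again has the allowed form $\mathbf{p}_1'\varphi(\mathbf{u})+\mathbf{p}_2'$.
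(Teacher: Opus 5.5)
Your argument is correct and is exactly the definition-unwinding that the paper compresses into ``follows directly from the definitions of subterm and freeness.'' The substitution $\varphi$ witnessing that $\mathbf{v}$ is not $\mathbf{w}$-free also witnesses that $\mathbf{v}$ is not $\mathbf{u}$-free, because substitutions preserve the decomposition $\mathbf{w}=\mathbf{q}_1\mathbf{u}+\mathbf{q}_2$ and subterms compose via distributivity in $P_f(X_c^+)$. The only blemish is cosmetic: you set up a contraposition but end by calling it a contradiction.
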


\begin{proof}
This follows directly from the definitions of subterm and freeness.
\end{proof}

A \emph{commutative ai-semiring identity} (or simply an \emph{identity}) is a formal expression of the form $\mathbf{u}\approx\mathbf{v}$, where $\mathbf{u}$ and $\mathbf{v}$ are terms. Let $S$ be a commutative ai-semiring and $\mathbf{u}\approx\mathbf{v}$ an identity. We say that $S$ satisfies $\mathbf{u}\approx\mathbf{v}$, or that $\mathbf{u}\approx\mathbf{v}$ holds in $S$, if $\varphi(\mathbf{u})=\varphi(\mathbf{v})$ for every semiring homomorphism $\varphi:P_f(X_c^+)\to S$.

The following result concerns the equational logic of commutative ai-semirings.
It can be seen as a direct specialization of a standard universal-algebraic fact
(see \cite[Exercise II.14.11]{BurrisSankappanavar1981}) to commutative ai-semirings.

\begin{lemma}\label{eqyield}
Let $\Sigma$ be a set of identities and let $\mathbf{u}\approx\mathbf{v}$ be a nontrivial identity. Then $\mathbf{u}\approx\mathbf{v}$ is derivable from $\Sigma$ if and only if there exist terms $\mathbf{t}_1,\mathbf{t}_2,\ldots,\mathbf{t}_n\in P_f(X_c^+)$ such that $\mathbf{u}=\mathbf{t}_1$, $\mathbf{v}=\mathbf{t}_n$ and, for each $1\leq i<n$, there are terms $\mathbf{p}_i,\mathbf{r}_i,\mathbf{s}_i,\mathbf{s}_i'\in P_f(X_c^+)$ and a substitution $\varphi_i:P_f(X_c^+)\to P_f(X_c^+)$ such that
\[
\mathbf{t}_i=\mathbf{p}_i\varphi_i(\mathbf{s}_i)+\mathbf{r}_i,\qquad
\mathbf{t}_{i+1}=\mathbf{p}_i\varphi_i(\mathbf{s}_i')+\mathbf{r}_i,
\]
where $\mathbf{s}_i\approx\mathbf{s}_i'\in\Sigma$ or $\mathbf{s}_i'\approx\mathbf{s}_i\in\Sigma$, $\mathbf{p}_i$ may be the empty word (acting as the multiplicative identity), and $\mathbf{r}_i$ may be the empty set (acting as the additive identity).
\end{lemma}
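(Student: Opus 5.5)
This is the standard description of derivability (Birkhoff completeness) specialised to the free commutative ai-semiring $P_f(X_c^+)$. I will show that the relation defined by the chain condition is the fully invariant congruence generated by $\Sigma$.

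Let $\theta$ be the set of pairs $(\mathbf{u},\mathbf{v})$ that are either equal or joined by a chain $\mathbf{t}_1,\dots,\mathbf{t}_n$ as in the statement.

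For sufficiency, I will show that each elementary step is derivable. If $\mathbf{s}\approx\mathbf{s}'\in\Sigma$, then $\varphi(\mathbf{s})\approx\varphi(\mathbf{s}')$ is a consequence of $\Sigma$, since derivations are closed under substitution. Multiplying on the left by $\mathbf{p}$ and adding $\mathbf{r}$ are compatible operations. Symmetry covers the reversed case. Transitivity then chains the steps together.

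For necessity, I will prove that $\theta$ is a fully invariant congruence on $P_f(X_c^+)$ containing $\Sigma$.
\begin{enumerate}
\item Reflexivity holds by definition. Symmetry follows by reversing a chain, which is allowed because the step condition permits either orientation of the identity from $\Sigma$. Transitivity follows by concatenating chains.
\item For compatibility with multiplication, take a step $\mathbf{t}_i=\mathbf{p}_i\varphi_i(\mathbf{s}_i)+\mathbf{r}_i$. Multiplying by a term $\mathbf{q}$ and using distributivity and commutativity in $P_f(X_c^+)$ gives $\mathbf{q}\mathbf{t}_i=(\mathbf{q}\mathbf{p}_i)\varphi_i(\mathbf{s}_i)+\mathbf{q}\mathbf{r}_i$. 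This has the same form, with the conventions that an empty $\mathbf{p}_i$ becomes $\mathbf{q}$ and an empty $\mathbf{r}_i$ stays empty.
\item For compatibility with addition, adding $\mathbf{q}$ to a step just replaces $\mathbf{r}_i$ by $\mathbf{r}_i+\mathbf{q}$.
\item For full invariance, apply a substitution $\psi$ to a step. This gives $\psi(\mathbf{p}_i)\,(\psi\circ\varphi_i)(\mathbf{s}_i)+\psi(\mathbf{r}_i)$, and $\psi\circ\varphi_i$ is again a substitution.
\item $\theta$ contains $\Sigma$, via the one-step chain with $\varphi$ the identity, $\mathbf{p}$ empty and $\mathbf{r}$ empty.
\end{enumerate}

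Since $P_f(X_c^+)$ is free in the variety of commutative ai-semirings (Kuřil–Polák), the quotient $P_f(X_c^+)/\theta$ is a commutative ai-semiring satisfying $\Sigma$. If $\mathbf{u}\approx\mathbf{v}$ is derivable from $\Sigma$, it holds in this quotient. Evaluating it under the canonical valuation $x\mapsto x/\theta$ then gives $(\mathbf{u},\mathbf{v})\in\theta$.

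Because the identity is nontrivial, $\mathbf{u}\neq\mathbf{v}$, so the pair cannot be in $\theta$ merely by reflexivity. Hence a chain with $n\ge 2$ exists. This route works through the general fact cited from Burris–Sankappanavar and does not need an explicit induction on derivations.

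The main obstacle is purely bookkeeping around the conventions "empty word" and "empty set". The key case is $\mathbf{r}_i$ empty when adding $\mathbf{q}$: the new $\mathbf{r}$ is then just $\mathbf{q}$. One must also take care that products and sums of terms stay inside the nonempty finite sets of $P_f(X_c^+)$. I expect no genuine mathematical difficulty here.
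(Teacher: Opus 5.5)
Your proposal is correct and is the standard argument behind the Burris--Sankappanavar exercise that the paper cites instead of giving a proof: the chain relation is the fully invariant congruence on $P_f(X_c^+)$ generated by $\Sigma$, since the maps $\mathbf{t}\mapsto\mathbf{p}\mathbf{t}+\mathbf{r}$ (with the empty-word and empty-set conventions) are closed under further translations and under substitutions. The one step you leave implicit, that $P_f(X_c^+)/\theta$ satisfies $\Sigma$, goes through as you indicate: lift any valuation into the quotient to a substitution using freeness, then apply full invariance of $\theta$.
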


The following lemma, which is the compactness theorem of equational logic (see \cite[Exercise~II.14.10]{BurrisSankappanavar1981}),
will be our main tool for establishing nonfinite basedness of several varieties in the sequel.

\begin{lemma}\label{compactness}
Let $\mathcal V$ be a commutative ai-semiring variety. If $\mathcal V$ is finitely based, then every equational basis of $\mathcal V$ contains a finite subset that also defines $\mathcal V$. Equivalently, if $\mathcal V$ has an infinite equational basis none of whose finite subsets defines $\mathcal V$, then $\mathcal V$ is nonfinitely based.
\end{lemma}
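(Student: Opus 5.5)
The statement is Lemma~\ref{compactness}, the compactness theorem of equational logic specialized to commutative ai-semiring varieties. I plan to derive it from the syntactic characterization of derivability in Lemma~\ref{eqyield} together with Birkhoff's completeness theorem, which says that an identity holds in $\mathcal V=\mathrm{Mod}(\Sigma)$ exactly when it is derivable from $\Sigma$.

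First I reduce to a single identity. Suppose $\mathcal V$ is finitely based, say by a finite set $\Gamma=\{\mathbf{u}_1\approx\mathbf{v}_1,\ldots,\mathbf{u}_k\approx\mathbf{v}_k\}$, and let $\Sigma$ be an arbitrary equational basis of $\mathcal V$. Each $\mathbf{u}_j\approx\mathbf{v}_j$ holds in $\mathcal V=\mathrm{Mod}(\Sigma)$, so by completeness it is derivable from $\Sigma$. If the identity is trivial, the empty subset already suffices. Otherwise Lemma~\ref{eqyield} supplies a finite chain $\mathbf{t}_1,\ldots,\mathbf{t}_n$ in which each step uses one identity $\mathbf{s}_i\approx\mathbf{s}_i'$ of $\Sigma$, possibly reversed.

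Let $\Sigma_j\subseteq\Sigma$ be the finite set of identities used in the chain for $\mathbf{u}_j\approx\mathbf{v}_j$. The same chain then witnesses, again by Lemma~\ref{eqyield}, that $\mathbf{u}_j\approx\mathbf{v}_j$ is derivable from $\Sigma_j$. Put $\Sigma_0=\Sigma_1\cup\cdots\cup\Sigma_k$, which is finite. Then $\Sigma_0$ derives every identity in $\Gamma$, so $\mathrm{Mod}(\Sigma_0)\subseteq\mathrm{Mod}(\Gamma)=\mathcal V$. Conversely $\Sigma_0\subseteq\Sigma$, so $\mathcal V=\mathrm{Mod}(\Sigma)\subseteq\mathrm{Mod}(\Sigma_0)$. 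Hence $\Sigma_0$ is a finite subset of $\Sigma$ that defines $\mathcal V$. The equivalent formulation in the statement is simply the contrapositive.

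There is no real obstacle. The only point needing care is that Lemma~\ref{eqyield} is stated for derivability inside the class of commutative ai-semirings, so every deduction must be carried out relative to the commutative ai-semiring axioms, which are built into the term algebra $P_f(X_c^+)$. Consequently "defines $\mathcal V$" means "defines $\mathcal V$ within commutative ai-semirings", and with that convention the argument above goes through as stated.
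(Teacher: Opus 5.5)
Your argument is correct. The paper gives no proof of its own and simply cites the compactness theorem of equational logic (Burris--Sankappanavar, Exercise~II.14.10); what you wrote is exactly the standard proof of that fact, namely completeness together with the finiteness of the derivation chains in Lemma~\ref{eqyield}, collected over the finitely many identities of a finite basis. Your closing remark about working relative to commutative ai-semirings is harmless: that variety is itself finitely axiomatized, so finite basedness relative to it coincides with finite basedness outright.
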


We denote by $\mathbf{u}\preceq\mathbf{v}$ (or equivalently $\mathbf{v}\succeq\mathbf{u}$) the identity $\mathbf{v}\approx\mathbf{v}+\mathbf{u}$, which we refer to as a \emph{commutative ai-semiring inequality} (or simply an \emph{inequality}). It is routine to verify that a commutative ai-semiring $S$ satisfies the inequality $\mathbf{u}\preceq\mathbf{v}$ if and only if for every semiring homomorphism $\varphi:P_f(X_c^+)\to S$, we have $\varphi(\mathbf{u})\leq\varphi(\mathbf{v})$. Consequently, $S$ satisfies an identity $\mathbf{u}\approx\mathbf{v}$ precisely when it satisfies both inequalities $\mathbf{u}\preceq\mathbf{v}$ and $\mathbf{v}\preceq\mathbf{u}$.
Therefore, $\mathbf{u}\approx\mathbf{v}$ is equivalent to the inequalities $\mathbf{u}\preceq\mathbf{v}$ and $\mathbf{v}\preceq\mathbf{u}$.

Now let $\Sigma$ be a set of identities, and let $\mathbf{u}\approx\mathbf{v}$ be an identity such that
\[
\mathbf{u}=\mathbf{u}_{1}+\cdots+\mathbf{u}_{k},\qquad
\mathbf{v}=\mathbf{v}_{1}+\cdots+\mathbf{v}_{\ell},
\]
where $\mathbf{u}_i,\mathbf{v}_j\in X_c^+$ for $1\leq i\leq k$ and $1\leq j\leq\ell$. One readily checks that the commutative ai-semiring variety defined by $\mathbf{u}\approx\mathbf{v}$ coincides with the commutative ai-semiring variety defined by the inequalities
\[
\mathbf{u}_i\preceq\mathbf{v},\qquad \mathbf{v}_j\preceq\mathbf{u}
\qquad(1\leq i\leq k,\ 1\leq j\leq\ell).
\]
Consequently, to prove that $\mathbf{u}\approx\mathbf{v}$ is derivable from $\Sigma$, it suffices to show that for every $i$ and $j$, the inequalities $\mathbf{u}_i\preceq\mathbf{v}$ and $\mathbf{v}_j\preceq\mathbf{u}$ can be derived from $\Sigma$. In view of this, we always restrict our attention to inequalities of the form $\mathbf{q}\preceq\mathbf{u}$, where $\mathbf{q}$ is a word and $\mathbf{u}$ is a term.

For a \emph{word} $\mathbf{p}$, let $\ell(\mathbf{p})$ denote the \emph{length} of $\mathbf{p}$;
that is, the number of variables occurring in $\mathbf{p}$ counting multiplicities,
and let $c(\mathbf{p})$ denote the \emph{content} of $\mathbf{p}$;
that is, the set of variables occurring in $\mathbf{p}$.
For a term $\mathbf{t}$, let $c(\mathbf{t})$ denote its content; that is,
\[
c(\mathbf{t})=\bigcup_{\bp\in \mathbf{t}} c(\bp).
\]
For an integer $k\geq1$, let $L_k(\mathbf{t})$ denote the term that is the sum of all words in $\mathbf{t}$ of length $k$.

\section{An equational basis for $TR_6$ and nonfinite basedness}
In this section, we present an explicit infinite equational basis for $TR_6$,
and then prove that $TR_6$ is nonfinitely based.
To this end, we first introduce some terms.

For each integer $n\geq1$, let $\mathbf{u}^{(n)}$ denote the term
\[
\mathbf{u}^{(n)}=x_1x_2+x_2x_3+\cdots+x_{2n}x_{2n+1}+x_{2n+1}x_1.
\]
These terms have played an important role in establishing the nonfinite basis property
of ai-semirings (see \cite{WuRenZhao2024, LyuRenYue2026, yr2601}).
We denote by $\sigma_n$ the inequality $x \preceq \mathbf{u}^{(n)}$.

\begin{lemma}\label{R6identities}
The ai-semiring $TR_6$ satisfies the identities $\sigma_n$ for all $n \geq 1$, together with
\begin{align}
&x\preceq y^2;  \label{eq2}\\
&x\preceq xy;  \label{eq3}\\
&x^2\approx x_1x_2x_3. \label{eq1}
\end{align}
\end{lemma}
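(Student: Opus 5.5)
The plan is to verify every identity directly from the Cayley tables in Table~\ref{cayleyMR6}, after first recording two structural facts about $TR_6$. The first is the order $\leq$ coming from addition. Reading off the table of $+$, we get $5<2<3<1$ and $6<4<3<1$. So $1$ is the top element and $3$ lies above every element except $1$. The second is that every product lies in $\{1,3\}$. The only products equal to $3$ are
\[
2\cdot 6=6\cdot 2,\quad 4\cdot 5=5\cdot 4,\quad 5\cdot 6=6\cdot 5.
\]
Every other product equals $1$; in particular $a^2=1$ for all $a$, and $3\cdot a=1$ for all $a$. Since $1$ is the top, $a\leq 1$ for every $a$, and a sum of elements of $\{1,3\}$ equals $3$ exactly when every summand equals $3$.

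With these facts, \eqref{eq2}, \eqref{eq3} and \eqref{eq1} follow quickly.
\begin{itemize}
\item For \eqref{eq2}, every square evaluates to $1$, which is the top element, so $x\preceq y^2$ holds.
\item For \eqref{eq1}, both sides always evaluate to $1$. The left side is a square. On the right, $x_1x_2\in\{1,3\}$, and both $1\cdot x_3=1$ and $3\cdot x_3=1$, so every triple product is $1$.
\item For \eqref{eq3}, if $xy=1$ there is nothing to check. If $xy=3$, then $x\in\{2,4,5,6\}$, and each of these elements lies below $3$.
\end{itemize}

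The only step that needs an idea is $\sigma_n$. Fix an evaluation and let $s$ be the value of $\mathbf{u}^{(n)}$, so $s\in\{1,3\}$. If $s=1$, then $x\preceq \mathbf{u}^{(n)}$ holds automatically. If $s=3$, the inequality holds for every value of $x$ except $x=1$. Since $x$ is a variable distinct from $x_1,\dots,x_{2n+1}$, $x$ may well take the value $1$. It therefore suffices to show that $s=3$ is impossible, i.e. that some adjacent product $x_ix_{i+1}$ (indices mod $2n+1$) equals $1$.

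To do this, I will form the graph $G$ on $\{1,\dots,6\}$ whose edges are the pairs $\{a,b\}$ with $ab=3$. By the list above, $G$ has edges $2\!-\!6$, $6\!-\!5$, $5\!-\!4$ and no loops, because all squares are $1$. Thus $G$ is a path, hence bipartite, with parts $\{2,5\}$ and $\{4,6\}$. If every product $x_ix_{i+1}$ in $\mathbf{u}^{(n)}$ equalled $3$, the evaluation would give a graph homomorphism from the odd cycle $C_{2n+1}$ into $G$, and no such homomorphism exists. Hence some summand is $1$, so $s=1$, and $\sigma_n$ holds in $TR_6$ for every $n\geq1$.

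I do not expect a real obstacle here. The one point that needs care is the bipartite argument for $\sigma_n$, because that is the only place where the parity of the cycle length $2n+1$ is used.
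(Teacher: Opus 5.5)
Your proof is correct and is essentially the paper's argument. You check $x\preceq y^2$, $x\preceq xy$ and $x^2\approx x_1x_2x_3$ directly from the Cayley tables, spelling out what the paper calls straightforward, and for $\sigma_n$ you use the same observation: the pairs with product $3$ form the path with edges $\{2,6\},\{6,5\},\{5,4\}$, which has no closed walk of odd length $2n+1$, so some $x_ix_{i+1}$ evaluates to the top element $1$.
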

\begin{proof}
It is straightforward to check that $TR_6$ satisfies the identities~\eqref{eq2}, \eqref{eq3}, and \eqref{eq1}.
So the main task is to prove that the inequality $\sigma_n$ holds in $TR_6$ for all $n\geq1$.
Indeed, let $n\geq 1$ be an integer, and
let $\varphi\colon P_f(X_c^+) \to TR_6$ be an arbitrary semiring homomorphism.
We claim that $\varphi(\mathbf{u}^{(n)})=1$.
Indeed, from the multiplicative Cayley table of $TR_6$ one sees that the product of any two elements is either $1$ or $3$;
consequently, $\varphi(\mathbf{w})=1$ or $3$ for every word $\mathbf{w}\in\mathbf{u}^{(n)}$.
We shall rule out the possibility that $\varphi(\mathbf{w})=3$ for every $\mathbf{w}\in\mathbf{u}^{(n)}$.

Suppose, to the contrary, that $\varphi(x_ix_{i+1})=3$ for all $i=1,2,\ldots,2n+1$,
where indices are taken modulo $2n+1$.
Then $\varphi(x_i)\varphi(x_{i+1})=3$ for each $i$. From the multiplicative Cayley table of $TR_6$, the only unordered pairs $\{a,b\}$ with $ab=3$ are
\[
\{2,6\}, \quad \{4,5\}, \quad \{5,6\}.
\]
Thus each $\varphi(x_i)$ must belong to $\{2,4,5,6\}$, and consecutive elements must form one of these three unordered pairs.

Define a graph $\mathbb{G}$ with vertex set $\{2,4,5,6\}$ and edge set
\[
\bigl\{\{2,6\},\{4,5\},\{5,6\}\bigr\}.
\]
Observe that $\mathbb{G}$ is a path of length $3$, and has no cycles.
Then every closed walk in $\mathbb{G}$ has even length.
Now consider the closed walk in $\mathbb{G}$:
\[
\varphi(x_1), \varphi(x_2), \ldots, \varphi(x_{2n+1}), \varphi(x_1),
\]
which has length $2n+1$. Since $2n+1$ is odd, such a closed walk cannot exist in $\mathbb{G}$.
Hence $\varphi(x_ix_{i+1})=1$ for some $i$.
Since $1$ is the additive maximum element of $TR_6$,
it follows immediately that $\varphi(\mathbf{u}^{(n)})=1$, and so $\varphi(x) \leq \varphi(\mathbf{u}^{(n)})$.
Therefore, $\sigma_n$ holds in $TR_6$.
\end{proof}

\begin{remark}
By Lemma~\ref{R6identities}, in any ai-semiring $S$ belonging to $\mathsf{V}(TR_6)$,
the square of any element, as well as the product of three or more elements,
is both the additive maximum element and the multiplicative zero element.
Moreover, $a \leq ab$ holds for all elements $a$ and $b$ of $S$; that is,
the product of two elements is greater than or equal to each of its factors.
These facts will be used several times later in the paper.
\end{remark}

\begin{remark}\label{remark26082910}
The ai-semiring $SR_6$ also satisfies the identities $\sigma_n$ for all $n \geq 1$.
Indeed, the proof that $TR_6$ satisfies $\sigma_n$ (Lemma~\ref{R6identities}) relies only on the multiplicative reduct of $TR_6$ and the fact that $1$ is its additive maximum element.
Since $SR_6$ and $TR_6$ share the same multiplicative reduct and $1$ is also the additive maximum element in $SR_6$, the same argument applies to $SR_6$.
\end{remark}

For each term $\mathbf{t}$,
let $\mathbb{G}_{\mathbf{t}}$ be the graph with vertex set
$V(\mathbb{G}_{\mathbf{t}})=c(L_2(\mathbf{t}))$
and edge set
$
E(\mathbb{G}_{\mathbf{t}})
=\bigl\{\{x,y\}\mid xy\in L_2(\mathbf{t})\bigr\}.
$
Then $\mathbb{G}_{\mathbf{t}}$ may contain loops (corresponding to words of the form $x^2$), but it has no multiple edges.
This graph will play a key role in the analysis that follows.
In particular, $\mathbb{G}_{\mathbf{u}^{(n)}}$ is an odd cycle of length $2n+1$, with vertices $x_1,\ldots,x_{2n+1}$ and edges $\{x_i,x_{i+1}\}$, where the indices are taken modulo $2n+1$.

We shall show that the identities
\eqref{eq2}, \eqref{eq3}, \eqref{eq1}, and $\sigma_n$ $(n \geq 1)$ form a countably infinite equational basis for $TR_6$.
For this purpose, we characterize the inequalities satisfied by $TR_6$.
A \emph{linear word} is a word that contains no repeated variables.

\begin{proposition}\label{wordprop}
Let $\mathbf{q}\preceq\mathbf{u}$ be a nontrivial inequality such that $\mathbf{u}=\mathbf{u}_1+\mathbf{u}_2+\cdots+\mathbf{u}_n$ with $\mathbf{u}_i,\mathbf{q}\in X_c^+$ for $1\leq i\leq n$. Then $\mathbf{q}\preceq\mathbf{u}$ is satisfied by $TR_6$ if and only if $\mathbf{u}$ and $\mathbf{q}$ satisfy one of the following conditions:
\begin{enumerate}[$(\rm i)$]
\item $\ell(\mathbf{u}_i)\geq3$ for some $\mathbf{u}_i\in\mathbf{u}$;
\item $\mathbf{u}$ contains a non-linear word;
\item the graph $\mathbb{G}_{\mathbf{u}}$ contains an odd cycle;
\item $\ell(\mathbf{q})=1$ and $c(\mathbf{q})\subseteq c(\mathbf{u})$.
\end{enumerate}
\end{proposition}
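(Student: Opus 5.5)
The plan is to prove sufficiency semantically, using Lemma~\ref{R6identities} and the remark that follows it. For necessity, I will assume none of (i)--(iv) holds and construct a homomorphism $\varphi\colon P_f(X_c^+)\to TR_6$ with $\varphi(\mathbf{q})=1$ and $\varphi(\mathbf{u})\neq 1$. Since $1$ is the additive maximum, this gives $\varphi(\mathbf{q})\not\le\varphi(\mathbf{u})$.

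\emph{Sufficiency.} Let $\varphi\colon P_f(X_c^+)\to TR_6$ be any homomorphism.
\begin{enumerate}[$(\rm i)$]
\item If $\ell(\mathbf{u}_i)\ge 3$, then $\varphi(\mathbf{u}_i)=1$ by \eqref{eq1}. Hence $\varphi(\mathbf{u})=1$, which is the maximum, so $\varphi(\mathbf{q})\le\varphi(\mathbf{u})$.
\item If $\mathbf{u}_i$ is non-linear, then either $\ell(\mathbf{u}_i)\ge3$ or $\mathbf{u}_i=x^2$. In the latter case $\varphi(x^2)=1$ by \eqref{eq1}, and we conclude as in (i).
\item If $\mathbb{G}_{\mathbf{u}}$ contains an odd cycle, the argument of Lemma~\ref{R6identities} applies verbatim. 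The path graph on $\{2,4,5,6\}$ has no closed walks of odd length, so some edge word of the cycle has $\varphi$-value $1$, and again $\varphi(\mathbf{u})=1$. (A loop $x^2$ is already covered by (ii).)
\item If $\mathbf{q}=x$ and $x\in c(\mathbf{u}_i)$, then $\mathbf{u}_i=x$ or $\mathbf{u}_i=xy$. In both cases $x\preceq \mathbf{u}_i\preceq\mathbf{u}$, using \eqref{eq3} in the second case.
\end{enumerate}

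\emph{Necessity.} Assume none of (i)--(iv) holds. Then every $\mathbf{u}_i$ is a variable or a product $yz$ with $y\neq z$, and $\mathbb{G}_{\mathbf{u}}$ is a loopless bipartite graph. The key observation from the Cayley tables is that a sum of elements of $\{2,3,4,5,6\}$ never equals $1$. Therefore it suffices that every variable of $c(\mathbf{u})$ is sent into $\{2,4,5,6\}$ and every edge word is sent to $3$; then $\varphi(\mathbf{u})\in\{2,3,4,5,6\}$. Edges are sent to $3$ precisely when $\varphi$ restricted to $\mathbb{G}_{\mathbf{u}}$ is a graph homomorphism into the path $2-6-5-4$. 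Variables of $\mathbf{u}$ not in $\mathbb{G}_{\mathbf{u}}$ are sent to $2$, and variables outside $c(\mathbf{u})$ are sent to $1$. It remains to arrange $\varphi(\mathbf{q})=1$.
\begin{enumerate}[$(\rm a)$]
\item If $c(\mathbf{q})\not\subseteq c(\mathbf{u})$, which includes the case $\ell(\mathbf{q})=1$ since (iv) fails, then some factor of $\mathbf{q}$ is $1$, so $\varphi(\mathbf{q})=1$. Any bipartition map of $\mathbb{G}_{\mathbf{u}}$ onto $\{2,6\}$ suffices for the rest.
\item If $\mathbf{q}$ has length $\ge3$ or is a square, then $\varphi(\mathbf{q})=1$ by \eqref{eq1}.
\item The remaining and main case is $\mathbf{q}=yz$ with $y\ne z$, $y,z\in c(\mathbf{u})$, and $yz\notin\mathbf{u}$. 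Here a naive two-colouring may give $\varphi(yz)=3$, so the colouring must be chosen more carefully.

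I will use the distance colouring from $y$ in its component: distance $0\mapsto2$, $1\mapsto6$, $2\mapsto5$, odd $\ge3\mapsto4$, even $\ge4\mapsto5$. Since the graph is bipartite, adjacent vertices have distances differing by exactly $1$. One checks that consecutive distance values always land on an edge of the path: $(2,6)$, $(6,5)$, $(5,4)$, $(4,5)$.
\begin{itemize}
\item If $z$ lies in the same component at odd distance, that distance is $\ge3$ because $yz$ is not an edge, so $z\mapsto4$ and $2\cdot4=1$.
\item If $z$ lies in the same component at even distance, then $z\mapsto5$ or $2$, and $2\cdot5=2\cdot2=1$.
\item If $z$ lies in another component, colour that component by distance from $z$ as well, giving $z\mapsto2$ and $2\cdot2=1$.
\item If $y$ or $z$ is not a vertex of $\mathbb{G}_{\mathbf{u}}$, it is sent to $2$. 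The partner's component is then coloured from the partner, so the partner is also sent to $2$.
\end{itemize}
\end{enumerate}

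I expect this last case to be the only real obstacle: one needs a homomorphism of the bipartite graph $\mathbb{G}_{\mathbf{u}}$ into the path $2-6-5-4$ that separates a prescribed non-edge $yz$. The distance colouring is designed for exactly this, and checking it against the multiplication table finishes the proof.
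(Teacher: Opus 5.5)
Your proof is correct. The sufficiency half matches the paper's, and the necessity half reaches the same conclusion by a somewhat different decomposition.

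\textbf{How the paper argues necessity.} It assumes only that (i)--(iii) fail and shows (iv) holds, rather than arguing contrapositively. It first uses one fixed homomorphism: $A\mapsto 5$, $B\mapsto 6$, and variables of $L_1(\bu)$ outside $\mathbb{G}_{\bu}$ sent to $3$. This gives $\varphi(\bu)=3$, which forces $c(\bq)\subseteq c(\bu)$ and $\bq$ linear of length at most $2$. If $\ell(\bq)=2$, it also forces $\bq=xy$ with $x\in A$, $y\in B$.

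Only this last configuration needs a second map. The paper takes $x\mapsto 2$, $y\mapsto 4$, $A\setminus\{x\}\mapsto 5$, $B\setminus\{y\}\mapsto 6$. Since $2$ and $4$ are the endpoints of the path $2-6-5-4$ and $xy$ is not an edge, the bipartition alone makes this a graph homomorphism, so no distance computation is needed.

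\textbf{Comparison with your route.} Your distance colouring from $y$ is valid: adjacent vertices have distances of opposite parity, and every consecutive pair lands on an edge of the path. It is, however, more than required.
\begin{itemize}
\item In your same-side sub-case, the plain bipartition into $\{5,6\}$ already gives $5\cdot5=1$ or $6\cdot6=1$.
\item Sending non-vertex variables of $\bu$ to $3$ instead of $2$, as the paper does, would remove your sub-cases where $y$ or $z$ lies outside $\mathbb{G}_{\bu}$, since $3$ annihilates everything.
\end{itemize}
What your version buys is a uniform one-witness-per-case structure. It also uses the clean observation that $\{2,3,4,5,6\}$ is closed under addition, which spares you from computing $\varphi(\bu)$ exactly.

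\textbf{One small imprecision.} In sufficiency (iv), ``$\bu_i=x$ or $\bu_i=xy$'' holds only once (i) and (ii) are excluded. In general, just use the substitution instance $x\preceq x\bp$ of \eqref{eq3}.
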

\begin{proof}
By Lemma~\ref{R6identities}, $TR_6$ satisfies the identities \eqref{eq2}, \eqref{eq3}, and \eqref{eq1}.
Suppose that one of the conditions {\rm (i)}--{\rm (iii)} holds.
Let $\varphi\colon P_f(X_c^+) \to TR_6$ be an arbitrary semiring homomorphism.
If {\rm (i)} or {\rm (ii)} holds, then by the identities~\eqref{eq2} and \eqref{eq1}, $\varphi(\mathbf{u})=1$,
and so $\varphi(\mathbf{q})\leq\varphi(\mathbf{u})$.
If {\rm (iii)} holds, then there exists an additive subterm $\mathbf{v}$ of $\mathbf{u}$ such that
$\mathbb{G}_\mathbf{v}$ is an odd cycle.
By the same graph argument as in the proof of Lemma~\ref{R6identities},
$\varphi(\mathbf{v})=1$, and so
\[
\varphi(\mathbf{u}) = \varphi(\mathbf{u}+\mathbf{v}) = \varphi(\mathbf{u})+\varphi(\mathbf{v})=\varphi(\mathbf{u})+1=1
\]
and so $\varphi(\mathbf{q})\leq\varphi(\mathbf{u})$.
	
Now suppose that {\rm (iv)} holds.
Since the inequality $\mathbf{q}\preceq\mathbf{u}$ is nontrivial,
there exists a word $\mathbf{u}_i\in L_{\geq 2}(\mathbf{u})$ such that $c(\mathbf{q})\subseteq c(\mathbf{u}_i)$,
and so $\mathbf{u}_i=\mathbf{q}\mathbf{p}$ for some $\mathbf{p}\in X_c^+$.
By the identity~\eqref{eq3}, we have
\[
\mathbf{u} \succeq \mathbf{u}_i=\mathbf{q}\mathbf{p} \succeq \mathbf{q}.
\]
Thus $TR_6$ satisfies $\mathbf{q}\preceq\mathbf{u}$.
	
Conversely, assume that $TR_6$ satisfies $\mathbf{q}\preceq\mathbf{u}$.
We need to show that one of the conditions {\rm (i)}--{\rm (iii)} holds.
Assume that the conditions {\rm (i)}, {\rm (ii)}, and {\rm (iii)} are all false.
We shall show that {\rm (iv)} must hold.
	
Under this assumption, $\mathbf{u}=L_1(\mathbf{u})+L_2(\mathbf{u})$, every word in $L_2(\mathbf{u})$ is linear,
and the graph $\mathbb{G}_{\mathbf{u}}$ contains no odd cycles.
Hence the graph $\mathbb{G}_{\mathbf{u}}$ is bipartite.
Consequently, there are disjoint sets $A$ and $B$ with $c(L_2(\mathbf{u}))=A \cup B$
such that every edge of $\mathbb{G}_{\mathbf{u}}$ has one endpoint in $A$ and the other in $B$;
that is, $(A,B)$ is a bipartition of $\mathbb{G}_{\mathbf{u}}$.
We may therefore write
\[
L_2(\mathbf{u})=x_1y_1+x_2y_2+\cdots+x_ry_r, \quad x_i\in A, \quad y_i\in B.
\]
	
Define a semiring homomorphism $\varphi\colon P_f(X_c^+)\to TR_6$ by: for every $t\in X$,
\[
\varphi(t)=
\begin{cases}
5, & t\in A,\\
6, & t\in B,\\
3, & t\in c(L_1(\bu))\setminus c(L_2(\bu)),\\
1, & \text{otherwise}.
\end{cases}
\]
Every word in $L_2(\mathbf{u})$ is sent to $5\cdot6=3$,
and every word in $L_1(\mathbf{u})$ is sent to $5$ or $6$ or $3$, which is less than or equal to $3$.
Thus
\[
\varphi(\mathbf{u})=\varphi(L_1(\mathbf{u})+L_2(\mathbf{u}))=\varphi(L_1(\mathbf{u}))+\varphi(L_2(\mathbf{u}))
=\varphi(L_1(\mathbf{u}))+3=3.
\]
Since $\mathbf{q}\preceq\mathbf{u}$ holds in $TR_6$, we have that $\varphi(\mathbf{q})\leq\varphi(\mathbf{u})=3$.
Hence $\varphi(\mathbf{q})\neq 1$.
By the definition of $\varphi$, $c(\mathbf{q})\subseteq c(\mathbf{u})$.
Moreover, every product of three or more elements and every square in $TR_6$ is equal to $1$.
Therefore, $\mathbf{q}$ is a linear word of length at most two.
	
Suppose that $\ell(\mathbf{q})=2$. Then $\varphi(\mathbf{q})=3$.
By the definition of $\varphi$, $\mathbf{q}=xy$ for some $x\in A$ and $y\in B$.	
Since $\mathbf{q}\preceq\mathbf{u}$ is nontrivial, $\mathbf{q}\notin L_2(\mathbf{u})$.
Define another semiring homomorphism $\psi \colon P_f(X_c^+)\to TR_6$ by setting, for every $t\in X$,
\[
\psi(t)=
\begin{cases}
2, & t=x,\\
4, & t=y,\\
5, & t\in A\setminus\{x\},\\
6, & t\in B\setminus\{y\},\\
3, & \text{otherwise}.
\end{cases}
\]
For each $x_iy_i\in L_2(\mathbf{u})$, the pair $(\psi(x_i),\psi(y_i))$ is one of $(2,6)$, $(5,4)$, or $(5,6)$;
the pair $(2,4)$ cannot occur because $xy=\mathbf{q}\notin L_2(\mathbf{u})$. In every case, $\psi(x_iy_i)=3$.
Every word in $L_1(\mathbf{u})$ is sent to an element, which is less than or equal to $3$.
Therefore $\psi(\mathbf{u})=3$, while
\[
\psi(\mathbf{q})=\psi(x)\psi(y)=2\cdot4=1\nleq3=\psi(\mathbf{u}).
\]
This contradicts $\mathbf{q}\preceq\mathbf{u}$. Hence $\ell(\mathbf{q})=1$.
Therefore {\rm (iv)} holds.
\end{proof}

We now present an infinite equational basis for $TR_6$.

\begin{proposition}\label{basisR6}
The commutative ai-semiring variety $\mathsf{V}(TR_6)$ is defined by the identities
\eqref{eq2}, \eqref{eq3}, \eqref{eq1}, and $\sigma_n$ $(n \geq 1)$.
\end{proposition}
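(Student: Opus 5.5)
Since $TR_6$ satisfies all the listed identities by Lemma~\ref{R6identities}, one inclusion is immediate. For the converse, let $\Sigma$ denote the set consisting of \eqref{eq2}, \eqref{eq3}, \eqref{eq1}, and $\sigma_n$ $(n\geq1)$, together with commutativity $xy\approx yx$. Commutativity is built into our setting of commutative terms. We must show that every inequality $\mathbf{q}\preceq\mathbf{u}$ holding in $TR_6$ is derivable from $\Sigma$. By the reduction at the end of Section~2 it suffices to treat $\mathbf{q}$ a word and $\mathbf{u}=\mathbf{u}_1+\cdots+\mathbf{u}_n$ a term, with the inequality nontrivial. We then invoke Proposition~\ref{wordprop}: one of the conditions (i)--(iv) holds, and we derive $\mathbf{q}\preceq\mathbf{u}$ from $\Sigma$ in each case.

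The key intermediate step is that in cases (i)--(iii) we show $\Sigma$ derives $y\preceq\mathbf{u}$ for every variable $y$, and also $\mathbf{w}\preceq\mathbf{u}$ for every word $\mathbf{w}$. More precisely, we show that $\mathbf{u}$ is equal modulo $\Sigma$ to a ``top'' element $\mathbf{u}+x^2$. Note first that from \eqref{eq1} all squares and all words of length at least $3$ are equal modulo $\Sigma$. From \eqref{eq2}, every variable lies below $x^2$. Using \eqref{eq3} repeatedly, any word $\mathbf{w}$ of length $\geq2$ satisfies $\mathbf{w}\preceq \mathbf{w}\mathbf{w}'$, where $\mathbf{w}\mathbf{w}'$ has length $\geq3$ and hence equals $x^2$. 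Thus every word lies below $x^2$, and it is enough to derive $x^2\preceq\mathbf{u}$.

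In case (i), some $\mathbf{u}_i$ has length $\geq3$, so $\mathbf{u}_i\approx x^2$ by \eqref{eq1}. In case (ii), some $\mathbf{u}_i$ contains $y^2$ as a factor: either $\mathbf{u}_i=y^2\approx x^2$ directly, or it has length $\geq3$. In case (iii), $\mathbb{G}_{\mathbf{u}}$ contains an odd cycle $z_1z_2\cdots z_{2m+1}$. If the cycle is a loop, we are in case (ii). Otherwise the cycle has distinct vertices and $m\geq1$ (a graph without multiple edges has no $2$-cycles, and an odd cycle of length $\geq3$ gives $m\geq1$). Substituting $x\mapsto x^2$ and $x_i\mapsto z_i$ in $\sigma_m$ yields $x^2\preceq z_1z_2+\cdots+z_{2m+1}z_1$, whose right side is an additive subterm of $\mathbf{u}$. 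Hence $x^2\preceq\mathbf{u}$. In all three cases $\mathbf{q}\preceq x^2\preceq\mathbf{u}$ is then derivable.

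In case (iv), $\mathbf{q}$ is a variable occurring in some word $\mathbf{u}_i$ of $\mathbf{u}$. If $\mathbf{u}_i=\mathbf{q}$, the inequality is trivial. Otherwise $\mathbf{u}_i=\mathbf{q}\mathbf{p}$, and \eqref{eq3} gives $\mathbf{q}\preceq\mathbf{q}\mathbf{p}\preceq\mathbf{u}$, exactly as in the proof of Proposition~\ref{wordprop}. The only real obstacle is case (iii). There one must extract an odd cycle from an arbitrary non-bipartite graph, which is a standard fact: a shortest odd closed walk is a cycle. One must also check that loops and short cycles are absorbed by cases (ii) and $\sigma_1$, which is what the argument above does. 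Since all cases yield derivations, $\Sigma$ is an equational basis for $TR_6$.
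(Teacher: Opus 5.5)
Your proposal is correct and follows essentially the same route as the paper: invoke Proposition~\ref{wordprop} and dispatch cases (i)--(ii) via \eqref{eq1} and \eqref{eq2}, case (iii) via $\sigma_k$, and case (iv) via \eqref{eq3}. The differences are cosmetic: the paper substitutes $x\mapsto\mathbf{q}$ directly in $\sigma_k$ rather than routing through $x^2$, and your use of \eqref{eq3} to obtain $\mathbf{w}\preceq x^2$ is unnecessary, since substituting $x\mapsto\mathbf{w}$ in \eqref{eq2} gives it at once.
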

\begin{proof}
By Lemma~\ref{R6identities}, $TR_6$ satisfies the identities
\eqref{eq2}, \eqref{eq3}, \eqref{eq1}, and $\sigma_n$ $(n \geq 1)$.
It remains to show that every inequality satisfied by $TR_6$ is derivable from
these identities.
Let $\mathbf{q}\preceq\mathbf{u}$ be such a nontrivial inequality, where $\mathbf{u}=\mathbf{u}_1+\mathbf{u}_2+\cdots+\mathbf{u}_n$
with $\mathbf{u}_i, \mathbf{q}\in X_c^+$ for $1\leq i\leq n$.
By Proposition~\ref{wordprop}, we consider the following four cases.

\textbf{Case 1.} $L_{\geq3}(\mathbf{u})\neq \emptyset$.
Then there exists a word $\mathbf{u}_i\in\mathbf{u}$ such that $\ell(\mathbf{u}_i)\geq3$. Thus
\[
\mathbf{u}\succeq \mathbf{u}_i\overset{\eqref{eq1}}{\approx} x^2
\overset{\eqref{eq2}}{\succeq}\mathbf{q}.
\]

\textbf{Case 2.} $\mathbf{u}$ contains a non-linear word.
Then there exists a word $\mathbf{u}_i \in \mathbf{u}$ such that $\mathbf{u}_i=x^2\bp$ for some $x\in X$ and some $\bp\in X_c^*$.
Thus
\[
\mathbf{u}\succeq \mathbf{u}_i= x^2\bp
\overset{\eqref{eq1}}{\approx}x^2
\overset{\eqref{eq2}}{\succeq}\mathbf{q}.
\]

\textbf{Case 3.} The graph $\mathbb{G}_{\mathbf{u}}$ contains an odd cycle. Then $L_2(\mathbf{u})$ contains
\[
x_1x_2,\ x_2x_3,\ldots,\ x_{2k}x_{2k+1},\ x_{2k+1}x_1
\]
for some $x_1,x_2,\ldots,x_{2k+1}\in X$ and $k\geq1$. Hence
\[
\mathbf{u}\succeq\mathbf{u}^{(k)}
\overset{\sigma_k}{\succeq}\mathbf{q}.
\]

\textbf{Case 4.} $\ell(\mathbf{q})=1$ and $c(\mathbf{q})\subseteq c(\mathbf{u})$.
Then there exists a word $\mathbf{u}_i\in\mathbf{u}$ such that $\mathbf{u}_i=\mathbf{q}\bp$ for some $\bp\in X_c^+$.
Thus
\[
\mathbf{u}\succeq \mathbf{u}_i=\mathbf{q}\bp \overset{\eqref{eq3}}{\succeq}\mathbf{q}.
\]

Therefore, in each case, the inequality $\mathbf{q}\preceq\mathbf{u}$
is derivable from the identities \eqref{eq2}, \eqref{eq3}, \eqref{eq1}, and $\sigma_n$ $(n \geq 1)$, completing the proof.
\end{proof}

We are now ready to prove the main result of this section.

\begin{proposition}\label{nfb}
The ai-semiring $TR_6$ is nonfinitely based.
\end{proposition}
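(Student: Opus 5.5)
By Proposition~\ref{basisR6}, the identities \eqref{eq2}, \eqref{eq3}, \eqref{eq1}, and $\sigma_n$ $(n\geq1)$ form an infinite equational basis of $\mathsf{V}(TR_6)$. By Lemma~\ref{compactness}, it therefore suffices to show that no finite subset of this basis defines $\mathsf{V}(TR_6)$. Any finite subset is contained in $\Sigma_m=\{\eqref{eq2},\eqref{eq3},\eqref{eq1}\}\cup\{\sigma_k : 1\leq k<m\}$ for some $m$. So the plan is to fix $m\geq 2$ and show that $\sigma_m$, i.e.\ $x\preceq\mathbf{u}^{(m)}$, is not derivable from $\Sigma_m$. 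To avoid the clash between the variable $x$ and the $x_i$, I will work with the equivalent instance $y\preceq\mathbf{u}^{(m)}$, where $y\notin c(\mathbf{u}^{(m)})$.

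I will argue syntactically via Lemma~\ref{eqyield}. The claim is that every term $\mathbf{t}$ with $\mathbf{t}\approx\mathbf{u}^{(m)}$ derivable from $\Sigma_m$ equals $\mathbf{u}^{(m)}+\mathbf{w}$, where $\mathbf{w}$ is a sum of linear words of length one with $c(\mathbf{w})\subseteq\{x_1,\dots,x_{2m+1}\}$. Call this set of terms $\mathcal{T}$. The proof is by induction along the derivation chain: I show that $\mathcal{T}$ is closed under a single elementary step $\mathbf{p}\varphi(\mathbf{s})+\mathbf{r}\to\mathbf{p}\varphi(\mathbf{s}')+\mathbf{r}$ with $\mathbf{s}\approx\mathbf{s}'$ in $\Sigma_m$ (either direction). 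Since $y\notin c(\mathbf{t})$ for all $\mathbf{t}\in\mathcal{T}$, the term $\mathbf{u}^{(m)}+y$ is not in $\mathcal{T}$, and non-derivability follows.

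The closure check is a case analysis on the identity used, for a term $\mathbf{t}=\mathbf{u}^{(m)}+\mathbf{w}\in\mathcal{T}$.
\begin{itemize}
\item \textbf{Identity \eqref{eq1}, and \eqref{eq2} read as $y^2\approx y^2+x$.} Every side of these contains a square or a word of length at least three. A substitution image times $\mathbf{p}$ keeps such a word, and $\mathbf{t}$ has none, so neither side can be matched inside $\mathbf{t}$.
\item \textbf{Identity \eqref{eq3}, read as $xy\approx xy+x$.} The matched side is either $\mathbf{p}\varphi(xy)$ or $\mathbf{p}\varphi(xy)+\mathbf{p}\varphi(x)$. Because $\mathbf{t}$ consists only of words of length at most two, $\mathbf{p}$ must be empty and $\varphi(x),\varphi(y)$ must be terms of single variables whose products lie in $\mathbf{t}$. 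Adding or removing $\varphi(x)$ then only adds or removes variables from $c(\mathbf{u}^{(m)})$. Removal cannot delete a word of $\mathbf{u}^{(m)}$, since those have length two. So the result stays in $\mathcal{T}$.
\item \textbf{Identity $\sigma_k$ with $k<m$, read as $\mathbf{u}^{(k)}\approx\mathbf{u}^{(k)}+x$.} Again $\mathbf{p}$ is empty and each $\varphi(x_i)$ is a sum of variables, with all products $\varphi(x_i)\varphi(x_{i+1})$ being edges of $\mathbb{G}_{\mathbf{t}}=\mathbb{G}_{\mathbf{u}^{(m)}}$, the $(2m+1)$-cycle. Choosing one variable from each $\varphi(x_i)$ gives a closed walk of odd length $2k+1$ in the cycle. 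Such a walk must wind around the cycle, which forces $2k+1\geq 2m+1$, a contradiction. Hence $\sigma_k$ is never applicable with both sides present, and the applicable direction (adding $\varphi(x)$) would require $\varphi(\mathbf{u}^{(k)})\subseteq\mathbf{t}$, which is the same impossibility.
\end{itemize}

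The main obstacle is making this last case rigorous. One must check that matching $\varphi(\mathbf{u}^{(k)})$ inside $\mathbf{t}$ really forces $\varphi$ to send variables to sums of single variables, ruling out longer images because all words of $\mathbf{t}$ have length at most two. One must also use the fact that a closed walk in a cycle graph of length $2m+1$ with odd length must have length at least $2m+1$, since any shorter closed walk lives in a path and is therefore even. Alternatively, I could phrase the whole argument through the freeness notion: show that $\mathbf{u}^{(m)}+\mathbf{w}$ is $\mathbf{u}^{(k)}$-free and $x^2$-free, and use Lemma~\ref{freelemma}.
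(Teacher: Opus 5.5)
Your proposal is correct and follows the paper's proof essentially step for step. It uses the same invariant, namely that every term reachable from $\mathbf{u}^{(m)}$ has the form $\mathbf{u}^{(m)}+\mathbf{w}$ with $\mathbf{w}$ a sum of variables from $c(\mathbf{u}^{(m)})$, and this is preserved because only \eqref{eq3} can ever be applied. The one difference is that you prove the $\mathbf{u}^{(k)}$-freeness yourself, via the odd closed walk argument in the $(2m+1)$-cycle, and you check it directly for $\mathbf{u}^{(m)}+\mathbf{w}$ rather than only for $\mathbf{u}^{(m)}$, whereas the paper cites \cite[Proposition~3.1]{LyuRenYue2026} for this step.
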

\begin{proof}
Let $\Sigma$ denote the collection of the identities
\eqref{eq2}, \eqref{eq3}, \eqref{eq1}, and $\sigma_n$ $(n \geq 1)$.
By Proposition~\ref{basisR6}, $\Sigma$ is an infinite equational basis of $\mathsf{V}(TR_6)$.
By Lemma~\ref{compactness}, it suffices to prove that no finite subset of $\Sigma$ defines $\mathsf{V}(TR_6)$.
Let $\Sigma'$ be an arbitrary finite subset of $\Sigma$. Choose an integer $n\geq2$ greater than every index $k$ for which $\sigma_k\in\Sigma'$.
Let $\Sigma_n$ consist of the identities~\eqref{eq2}, \eqref{eq3}, \eqref{eq1}, and $\sigma_k$ for $1\leq k<n$.
Then $\Sigma_n$ contains $\Sigma'$, but does not contain $\sigma_n$.
To show that $\Sigma'$ cannot define $\mathsf{V}(TR_6)$,
it remains to prove that $\Sigma_n$ cannot derive the inequality $\sigma_n$.

Suppose, for contradiction, that $\Sigma_n$ can derive the inequality $\sigma_n$.
By Lemma~\ref{eqyield},
there exist terms $\mathbf{t}_1, \mathbf{t}_2, \ldots, \mathbf{t}_m \in P_f(X_c^+)$ such that $\mathbf{u}^{(n)}=\mathbf{t}_1$, $\mathbf{u}^{(n)}+x=\mathbf{t}_m$ and, for each $1\leq i<m$, there are terms $\mathbf{p}_i,\mathbf{r}_i,\mathbf{s}_i,\mathbf{s}_i'\in P_f(X_c^+)$ and a substitution $\varphi_i\colon P_f(X_c^+) \to P_f(X_c^+)$ such that
\[
\mathbf{t}_i=\mathbf{p}_i\varphi_i(\mathbf{s}_i)+\mathbf{r}_i,\quad
\mathbf{t}_{i+1}=\mathbf{p}_i\varphi_i(\mathbf{s}_i')+\mathbf{r}_i,
\]
where $\mathbf{s}_i\approx\mathbf{s}_i' \in \Sigma_n$ or $\mathbf{s}_i'\approx\mathbf{s}_i \in \Sigma_n$,
$\mathbf{p}_i$ may be the empty word, and $\mathbf{r}_i$ may be the empty set.

We shall show by induction on $i$ that $\mathbf{t}_i = \mathbf{u}^{(n)} + \mathbf{w}_i$ for every $2 \leq i \leq m$,
where $\mathbf{w}_i$ is a (possibly empty) sum of variables in $c(\mathbf{u}^{(n)})$.
Thus $\mathbf{t}_m = \mathbf{u}^{(n)} + \mathbf{w}_m$,
where $\mathbf{w}_m$ is a sum of some variables in $c(\mathbf{u}^{(n)})$.
This contradicts the fact that $\mathbf{t}_m = \mathbf{u}^{(n)} + x$, since $x \notin c(\mathbf{u}^{(n)})$.
Hence $\Sigma_n$ cannot derive $\sigma_n$, and so $\Sigma'$ cannot define $\mathsf{V}(TR_6)$.
Therefore, $TR_6$ is nonfinitely based.

Indeed, if $i=2$, then $\mathbf{t}_{2}=\mathbf{p}_1\varphi_1(\mathbf{s}_1')+\mathbf{r}_1$ and $\mathbf{p}_1\varphi_1(\mathbf{s}_1)+\mathbf{r}_1=\mathbf{t}_{1}=\mathbf{u}^{(n)}$,
where $\mathbf{s}_1\approx\mathbf{s}_1' \in \Sigma_n$ or $\mathbf{s}_1'\approx\mathbf{s}_1 \in \Sigma_n$.
Observe that $\mathbf{u}^{(n)}$ is $x^2$-free, $x_1x_2x_3$-free, and $(xy+x)$-free.
Also, by \cite[Proposition~3.1]{LyuRenYue2026}, $\mathbf{u}^{(n)}$ is $\mathbf{u}^{(k)}$-free for each $1\leq k<n$.
Thus $\mathbf{s}_1=xy$ and $\mathbf{s}_1'=xy+x$, and so $\mathbf{p}_1$ is empty.
Consequently,
\begin{equation}\label{26082701}
\mathbf{u}^{(n)}=\varphi_1(xy)+\mathbf{r_1}.
\end{equation}
Therefore, $\varphi_1(xy)=x_ix_{i+1}$ or $x_{i+1}(x_i+x_{i+2})$ for some $i$,
where indices are taken modulo $2n+1$.
If $\varphi_1(xy)=x_ix_{i+1}$, then $\{\varphi_1(x),\varphi_1(y)\}=\{x_i,x_{i+1}\}$.
If $\varphi_1(xy)=x_{i+1}(x_i+x_{i+2})$, then $\{\varphi_1(x), \varphi_1(y)\}=\{x_i+x_{i+2},x_{i+1}\}$.
Thus $\varphi_1(x)$ is either a single variable or the sum of two variables, all belonging to $c(\mathbf{u}^{(n)})$.
Hence
\[
\begin{aligned}
\mathbf{t}_{2}
&=\mathbf{p}_1\varphi_1(\mathbf{s}_1')+\mathbf{r}_1
 =\varphi_1(xy+x)+\mathbf{r}_1 \\
&=\varphi_1(xy)+\varphi_1(x)+\mathbf{r}_1
 =\varphi_1(xy)+\mathbf{r}_1+\varphi_1(x)
 \stackrel{\eqref{26082701}}=\mathbf{u}^{(n)}+\varphi_1(x).
\end{aligned}
\]
This proves the base case for the induction.

Now let $2 \leq i < m$.
Suppose that $\mathbf{t}_i = \mathbf{u}^{(n)} + \mathbf{w}_i$,
where $\mathbf{w}_i$ is a sum of variables in $c(\mathbf{u}^{(n)})$.
Then $\mathbf{u}^{(n)} + \mathbf{w}_i=\mathbf{p}_i\varphi_i(\mathbf{s}_i)+\mathbf{r}_i$
and $\mathbf{p}_i\varphi_i(\mathbf{s}_i')+\mathbf{r}_i=\mathbf{t}_{i+1}$,
where $\mathbf{s}_i\approx\mathbf{s}_i' \in \Sigma_n$ or $\mathbf{s}_i'\approx\mathbf{s}_i \in \Sigma_n$.
Since $\mathbf{u}^{(n)}$ is $x^2$-free, $x_1x_2x_3$-free,
and $\mathbf{u}^{(n)}$ is $\mathbf{u}^{(k)}$-free for each $1\leq k<n$,
it follows that either $\mathbf{s}_i=xy$ and $\mathbf{s}_i'=xy+x$, or $\mathbf{s}_i=xy+x$ and $\mathbf{s}_i'=xy$,
and hence $\mathbf{p}_i$ is empty.

If $\mathbf{s}_i=xy$ and $\mathbf{s}_i'=xy+x$, then
\[
\mathbf{u}^{(n)} + \mathbf{w}_i=\varphi_i(\mathbf{s}_i)+\mathbf{r}_i=\varphi_i(xy)+\mathbf{r}_i,
\]
and so
\begin{equation}\label{26082702}
\mathbf{u}^{(n)} + \mathbf{w}_i=\varphi_i(xy)+\mathbf{r}_i.
\end{equation}
This implies that $\varphi_i(x)$ is either a single variable or the sum of two variables, all belonging to $c(\mathbf{u}^{(n)})$.
Thus
\[
\begin{aligned}
\mathbf{t}_{i+1}
&=\varphi_i(\mathbf{s}_i')+\mathbf{r}_i
 =\varphi_i(xy+x)+\mathbf{r}_i =\varphi_i(xy)+\varphi_i(x)+\mathbf{r}_i\\
&=\varphi_i(xy)+\mathbf{r}_i+\varphi_i(x)
  \stackrel{\eqref{26082702}}=\mathbf{u}^{(n)} + \mathbf{w}_i+\varphi_i(x)
 =\mathbf{u}^{(n)} + \mathbf{w}_{i+1},
\end{aligned}
\]
where $\mathbf{w}_{i+1}=\mathbf{w}_i+\varphi_i(x)$, which is a sum of variables in $c(\mathbf{u}^{(n)})$.

If $\mathbf{s}_i=xy+x$ and $\mathbf{s}_i'=xy$, then
\[
\mathbf{u}^{(n)} + \mathbf{w}_i=\varphi_i(\mathbf{s}_i)+\mathbf{r}_i=\varphi_i(xy+x)+\mathbf{r}_i
=\varphi_i(xy)+\varphi_i(x)+\mathbf{r}_i,
\]
and so $\varphi_i(x)$ is either a single variable or the sum of two variables, all belonging to $c(\mathbf{u}^{(n)})$.
Thus $\varphi_i(x)$ is an additive subterm of $\mathbf{w}_i$,
and so $\mathbf{u}^{(n)}$ is an additive subterm of $\varphi_i(xy)+\mathbf{r}_i$.
Consequently, $\mathbf{r}_i=\mathbf{r}_{i1}+\mathbf{r}_{i2}$,
where $\mathbf{r}_{i1}$ is an additive subterm of $\mathbf{u}^{(n)}$
and $\mathbf{r}_{i2}$ is an additive subterm of $\mathbf{w}_i$.
Note that $\varphi_i(xy)$ is an additive subterm of $\mathbf{u}^{(n)}$.
Then \begin{equation}\label{26082703}
\varphi_i(xy) + \mathbf{r}_{i1} = \mathbf{u}^{(n)}.
\end{equation}
Thus
\[
\mathbf{t}_{i+1}
= \varphi_i(\mathbf{s}_i') + \mathbf{r}_i
= \varphi_i(xy) + \mathbf{r}_i
= \varphi_i(xy) + \mathbf{r}_{i1} + \mathbf{r}_{i2}
\stackrel{\eqref{26082703}}= \mathbf{u}^{(n)} + \mathbf{w}_{i+1},
\]
where $\mathbf{w}_{i+1}=\mathbf{r}_{i2}$, which is a sum of variables in $c(\mathbf{u}^{(n)})$.
This completes the proof.
\end{proof}

\section{The subvariety lattice of the variety $\mathsf{V}(TR_6)$}
In the previous section, we proved that $\mathsf{V}(TR_6)$ is a nonfinitely based variety.
In this section, we characterize the subvariety lattice of $\mathsf{V}(TR_6)$,
and show that it forms a four-element chain.
Moreover, every proper subvariety of $\mathsf{V}(TR_6)$ is finitely based.
Consequently, $\mathsf{V}(TR_6)$ is a limit variety.

We begin by determining the minimal nontrivial subvarieties of $\mathsf{V}(TR_6)$.
Let $T_2$ denote the two-element ai-semiring $\{0, 1\}$ whose Cayley tables are given in Table~\ref{T2}.
Then $T_2$ is isomorphic to the subalgebra $\{1, 3\}$ of $TR_6$.
Thus $T_2$ lies in $\mathsf{V}(TR_6)$.

\begin{table}[htbp]
\centering
\small
\caption{The Cayley tables of $T_2$}\label{T2}
\renewcommand{\arraystretch}{1.08}
\setlength{\tabcolsep}{4.8pt}
\begin{tabular}{c|cc}
$+$ & 0 & 1  \\ \hline
0 & 0 & 1 \\
1 & 1 & 1
\end{tabular}
\qquad\qquad
\begin{tabular}{c|cc}
$\cdot$ & 0 & 1  \\ \hline
0 & 1 & 1 \\
1 & 1 & 1
\end{tabular}
\end{table}

\begin{proposition}\label{minimal-subvariety}
The variety $\mathsf{V}(T_2)$ is the only minimal nontrivial subvariety of $\mathsf{V}(TR_6)$.
\end{proposition}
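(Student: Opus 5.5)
Since $T_2$ is isomorphic to the subalgebra $\{1,3\}$ of $TR_6$, $\mathsf{V}(T_2)\subseteq\mathsf{V}(TR_6)$. To prove the statement, I will show that every nontrivial ai-semiring $S\in\mathsf{V}(TR_6)$ contains a copy of $T_2$ as a subalgebra. This gives $\mathsf{V}(T_2)\subseteq\mathcal V$ for every nontrivial subvariety $\mathcal V$. It also shows that $\mathsf{V}(T_2)$ is itself minimal, since any nontrivial subvariety of $\mathsf{V}(T_2)$ would contain $T_2$ and hence equal $\mathsf{V}(T_2)$.

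Let $S$ be a nontrivial member of $\mathsf{V}(TR_6)$. By the Remark following Lemma~\ref{R6identities}, identities \eqref{eq2} and \eqref{eq1} show that all squares $a^2$, and all products of three elements, equal one common element $\infty$. This $\infty$ is the multiplicative zero and the additive maximum of $S$. Indeed, from $x\preceq y^2$ we get $a\le b^2$ for all $a,b$; so all squares are mutually comparable and hence equal, and this common value is the maximum. By $x^2\approx x_1x_2x_3$ we have $a\infty=a\cdot b\cdot b=\infty$, so $\infty$ is a zero. Since $S$ is nontrivial, choose $a\in S$ with $a\ne\infty$. Then $\{a,\infty\}$ is closed under both operations: $a+a=a$, $a+\infty=\infty$ because $\infty$ is the maximum, $a\cdot a=a^2=\infty$, and $a\infty=\infty$. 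The map $0\mapsto a$, $1\mapsto\infty$ is then an isomorphism from $T_2$ onto this subalgebra, by direct comparison with Table~\ref{T2}.

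Finally, $T_2$ is nontrivial, so $\mathsf{V}(T_2)$ is a nontrivial subvariety. Combined with the first paragraph, $\mathsf{V}(T_2)$ is contained in every nontrivial subvariety of $\mathsf{V}(TR_6)$, and therefore it is the unique minimal nontrivial one.

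I expect no real obstacle. The only point needing care is the existence and uniqueness of $\infty$ in an arbitrary member of $\mathsf{V}(TR_6)$, not just in $TR_6$. This follows purely from the identities, as argued above.
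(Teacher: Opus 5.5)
Your proof is correct, but it follows a different route from the paper.

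The paper argues externally. It invokes the known classification (Polin; Shao--Ren) of the six minimal nontrivial ai-semiring varieties $\mathsf{V}(L_2),\mathsf{V}(R_2),\mathsf{V}(M_2),\mathsf{V}(D_2),\mathsf{V}(N_2),\mathsf{V}(T_2)$. It then discards the first five because their generators fail $x\preceq y^2$.

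You argue internally, using only \eqref{eq2} and \eqref{eq1}. In any nontrivial $S\in\mathsf{V}(TR_6)$ the common value $\infty$ of all squares is an absorbing top element. For any $a\neq\infty$, the set $\{a,\infty\}$ is then a subalgebra isomorphic to $T_2$.

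Your version is self-contained, with no appeal to the classification. It also directly gives the form of the result used later in the proof of Proposition~\ref{subvariety-lattice} (``$\mathcal V$ properly contains $\mathsf{V}(T_2)$''): $\mathsf{V}(T_2)$ is contained in every nontrivial subvariety of $\mathsf{V}(TR_6)$. Getting that from the paper's formulation tacitly needs the standard fact that every nontrivial variety contains a minimal nontrivial subvariety.

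The paper's route is shorter, given the cited classification. Since your argument uses only \eqref{eq2} and \eqref{eq1}, it applies verbatim to $\mathsf{V}(ST_7)$ as well (Proposition~\ref{prop:minimal-ST7}).
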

\begin{proof}
Since $T_2$ lies in $\mathsf{V}(TR_6)$, the variety $\mathsf{V}(T_2)$ is a subvariety of $\mathsf{V}(TR_6)$.
By \cite[Theorem~1.1]{Polin1980} and \cite[Table~1]{ShaoRen2015},
the minimal nontrivial subvarieties of the variety of all ai-semirings are precisely
\[
\mathsf{V}(L_2),\quad \mathsf{V}(R_2),\quad \mathsf{V}(M_2),\quad
\mathsf{V}(D_2),\quad \mathsf{V}(N_2),\quad \mathsf{V}(T_2).
\]
Observe that none of $L_2, R_2, M_2, D_2$, or $N_2$ satisfies identity~\eqref{eq2}.
Therefore, $\mathsf{V}(T_2)$ is the only minimal nontrivial subvariety of $\mathsf{V}(TR_6)$.
\end{proof}

By \cite[Table~1]{ShaoRen2015}, $T_2$ is finitely based.
More precisely, $\mathsf{V}(T_2)$ is the ai-semiring variety defined by the identities
\begin{align}
&x \preceq x^2; \label{eq50}\\
&x_1x_2 \approx y_1y_2. \label{eq51}
\end{align}
We now determine an equational basis for $\mathsf{V}(T_2)$ within $\mathsf{V}(TR_6)$.

\begin{proposition}\label{Sa-subvariety}
The variety $\mathsf{V}(T_2)$ is the subvariety of $\mathsf{V}(TR_6)$ defined by the identity
\begin{equation}
x^2\approx xy.\label{eq7}
\end{equation}
\end{proposition}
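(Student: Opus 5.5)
We show that $\mathsf{V}(T_2)$ and the subvariety $\mathcal{W}$ of $\mathsf{V}(TR_6)$ defined by \eqref{eq7} coincide, proving the two inclusions separately.

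For $\mathsf{V}(T_2)\subseteq\mathcal{W}$, we already know $T_2\in\mathsf{V}(TR_6)$. In $T_2$ every product equals $1$, so $T_2$ satisfies $x^2\approx xy$. Hence $T_2\in\mathcal{W}$.

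For $\mathcal{W}\subseteq\mathsf{V}(T_2)$, it suffices to derive the basis \eqref{eq50} and \eqref{eq51} of $\mathsf{V}(T_2)$ from \eqref{eq2}, \eqref{eq3}, \eqref{eq1}, $\sigma_n$ $(n\ge1)$ and \eqref{eq7}.

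\begin{itemize}
\item Identity \eqref{eq50}, namely $x\preceq x^2$, is an immediate instance of \eqref{eq2} with $y=x$ (or of \eqref{eq3}).
\item For \eqref{eq51}, first apply \eqref{eq7} to get $x_1x_2\approx x_1^2$. Next, \eqref{eq1} gives $x_1^2\approx x_1x_2x_3\approx y^2$ for any variable $y$, because the right-hand side of \eqref{eq1} does not depend on the variable squared on the left. Hence all squares are equal, so $x_1^2\approx y_1^2$. Applying \eqref{eq7} again gives $y_1^2\approx y_1y_2$. Chaining these yields $x_1x_2\approx y_1y_2$.
\end{itemize}

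So every algebra in $\mathcal{W}$ satisfies the basis of $\mathsf{V}(T_2)$, and $\mathcal{W}\subseteq\mathsf{V}(T_2)$.

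The only point needing care is that \eqref{eq1} really identifies squares of different variables. It does: \eqref{eq1} is an identity with $x$ on one side and $x_1,x_2,x_3$ on the other, so substituting and using transitivity makes any two squares equal. No real obstacle is expected.
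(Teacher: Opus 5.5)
Your proof is correct and follows the paper's approach. You check that $T_2$ satisfies \eqref{eq7}, then derive \eqref{eq50} and \eqref{eq51} from the basis of $\mathsf{V}(TR_6)$ together with \eqref{eq7}. The one minor difference is that you equate $x_1^2$ and $y_1^2$ via \eqref{eq1}, since both equal $x_1x_2x_3$, whereas the paper uses \eqref{eq2} in both directions, $x_1^2\approx x_1^2+y_1^2\approx y_1^2$; either step is valid.
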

\begin{proof}
It is easy to see that $T_2$ satisfies the identity \eqref{eq7}.
It remains to show that \eqref{eq50} and \eqref{eq51} are derivable from \eqref{eq2} and \eqref{eq7}.
First, \eqref{eq50} follows directly from \eqref{eq2} by taking $y=x$.
Second, using \eqref{eq2} and \eqref{eq7}, we have
\[
x_1x_2
\overset{\eqref{eq7}}{\approx} x_1^2
\overset{\eqref{eq2}}{\approx} x_1^2 + y_1^2
\approx y_1^2 + x_1^2
\overset{\eqref{eq2}}{\approx} y_1^2
\overset{\eqref{eq7}}{\approx} y_1y_2.
\]
Thus \eqref{eq51} is also derivable.
Therefore, $\mathsf{V}(T_2)$ is precisely the subvariety of $\mathsf{V}(TR_6)$ defined by \eqref{eq7}.
\end{proof}

\begin{table}[htbp]
\centering
\small
\caption{The Cayley tables of \(S_{(4, 369)}\)}\label{S4-369}
\renewcommand{\arraystretch}{1.08}
\setlength{\tabcolsep}{4.8pt}
\begin{tabular}{c|cccc}
$+$ & 1 & 2 & 3 & 4 \\ \hline
1 & 1 & 2 & 1 & 1 \\
2 & 2 & 2 & 2 & 2 \\
3 & 1 & 2 & 3 & 1 \\
4 & 1 & 2 & 1 & 4
\end{tabular}
\qquad\qquad
\begin{tabular}{c|cccc}
$\cdot$ & 1 & 2 & 3 & 4 \\ \hline
1 & 2 & 2 & 2 & 2 \\
2 & 2 & 2 & 2 & 2 \\
3 & 2 & 2 & 2 & 1 \\
4 & 2 & 2 & 1 & 2
\end{tabular}
\end{table}

Let $S_{(4, 369)}$ denote the four-element ai-semiring $\{1, 2, 3, 4\}$ whose Cayley tables are given in Table~\ref{S4-369}.
Then $S_{(4, 369)}$ is isomorphic to the subalgebra $\{1, 3, 5, 6\}$ of $TR_6$.
Hence $S_{(4, 369)}$ lies in the variety $\mathsf{V}(TR_6)$.
The following result, due to Ren et al.~\cite[Proposition 6.8]{RenLiuYueChen2026}, shows that $S_{(4, 369)}$ is finitely based.

\begin{lemma}\label{lemma26082720}
The commutative ai-semiring variety $\mathsf{V}(S_{(4, 369)})$ is defined by the identities \eqref{eq2}, \eqref{eq3}, together with
\begin{align}
&x_4\preceq x_1x_2x_3; \label{eq30}\\
&x_1x_4\preceq x_1x_2+x_2x_3+x_3x_4. \label{eq31}
\end{align}
\end{lemma}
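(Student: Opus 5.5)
We prove the lemma in the same way Propositions~\ref{wordprop} and \ref{basisR6} were proved for $TR_6$: first describe syntactically every inequality $\mathbf{q}\preceq\mathbf{u}$ that holds in $S_{(4,369)}$, and then show that each such inequality can be derived from \eqref{eq2}, \eqref{eq3}, \eqref{eq30}, \eqref{eq31}. The model we work with is the subalgebra $\{1,3,5,6\}$ of $TR_6$. In it, $1$ is the additive top and the multiplicative zero, $5\cdot 6=3$, and every other product equals $1$. Since $3$ lies below $5$ and $6$, the subalgebra is a chain-like order with $1$ at the top.

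\textbf{Soundness.} We first check that $S_{(4,369)}$ satisfies the four identities.
\begin{itemize}
\item \eqref{eq2} holds because every square is $1$.
\item \eqref{eq3} holds because $ab\in\{1,3\}$ and $a\le ab$ whenever $ab=3$.
\item \eqref{eq30} holds because every triple product is $1$.
\item For \eqref{eq31}, assume the right-hand side is not $1$. Then $x_1x_2=x_2x_3=x_3x_4=3$, so $(x_1,x_2,x_3,x_4)$ alternates between $5$ and $6$. Hence $x_1x_4=5\cdot 6=3$, which lies below the right-hand side.
\end{itemize}

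\textbf{Characterization.} Let $\mathbf{q}\preceq\mathbf{u}$ be nontrivial. We claim that it holds in $S_{(4,369)}$ if and only if one of the following holds:
\begin{itemize}
\item[(i)] some word of $\mathbf{u}$ has length at least $3$ or is non-linear;
\item[(ii)] $\mathbb{G}_{\mathbf{u}}$ has an odd cycle;
\item[(iii)] $\ell(\mathbf{q})=1$ and $c(\mathbf{q})\subseteq c(\mathbf{u})$;
\item[(iv)] $\mathbf{q}=xy$ with $x,y$ joined in $\mathbb{G}_{\mathbf{u}}$ by a path of odd length.
\end{itemize}
Condition (iv) is the new feature compared with $TR_6$, and it reflects the fact that the elements $2$ and $4$ are absent.

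Sufficiency is as before: in case (iv) the path forces the images of $x$ and $y$ into opposite colour classes.

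For necessity, suppose (i)--(iii) fail. Then $\mathbb{G}_{\mathbf{u}}$ is bipartite. Take the valuation $\varphi$ from the proof of Proposition~\ref{wordprop}, but choose the bipartition $(A,B)$ separately on each connected component. This gives $\varphi(\mathbf{u})=3$, and it forces $\mathbf{q}$ to be a variable of $c(\mathbf{u})$ or a linear word $xy$ with $x,y\in c(L_2(\mathbf{u}))$. Now suppose $\mathbf{q}=xy$ with no odd path between $x$ and $y$. If $x$ and $y$ lie in different components, flip the colouring of one component so that both receive $5$. If they lie in the same component, an even path between them already puts them in the same class. Either way $\varphi(xy)=1\nleq 3$, a contradiction. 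One should also check the case where $x$ or $y$ lies only in $L_1(\mathbf{u})$: assigning it $3$ gives $\varphi(\mathbf{q})=1$ as well.

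\textbf{Derivation.} Cases (i)--(iii) are handled as in Proposition~\ref{basisR6}, with one change: there we used \eqref{eq1} to collapse a long word to $x^2$, but that identity is not available here. Instead we argue as follows.
\begin{itemize}
\item For a word of length at least $3$, apply \eqref{eq30} to get $\mathbf{q}\preceq x_1x_2x_3$ (taking $x_4=\mathbf{q}$) and then \eqref{eq3}.
\item For a non-linear word, use $\mathbf{u}\succeq x^2\mathbf{p}\succeq x^2\succeq \mathbf{q}$, via \eqref{eq3} and \eqref{eq2}.
\item For an odd cycle $x_1x_2+\cdots+x_{2k+1}x_1$, repeatedly apply \eqref{eq31} to three consecutive edges to obtain shortcut edges. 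This reduces the cycle until a loop $x_ix_i=x_i^2$ appears below $\mathbf{u}$, and then \eqref{eq2} finishes the case.
\item Case (iv) is handled by induction on the length of the odd path, using \eqref{eq31} to replace three consecutive edges by one.
\end{itemize}

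The main obstacle is the odd-cycle reduction, since the number of edges in the cycle must stay odd and we must end at an honest square. For a $3$-cycle $x_1x_2+x_2x_3+x_3x_1$, apply \eqref{eq31} to the path $x_1,x_2,x_3,x_1$ to get $x_1x_1\preceq x_1x_2+x_2x_3+x_3x_1$, i.e.\ $x_1^2\preceq\mathbf{u}$. Since each use of \eqref{eq31} shortens the cycle by $2$, induction reduces every odd cycle to this base case.
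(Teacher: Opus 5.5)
Your proof is correct. The paper does not prove this lemma itself (it quotes it from Ren et al.), but your argument follows exactly the template the paper uses for $TR_6$ in Propositions~\ref{wordprop} and~\ref{basisR6} and for $S_{(5,2269)}$ in Proposition~\ref{prop:basis-S5}: your condition (iv) plays the role of the odd-path closure $\mathbb{G}_{\mathbf{u}}^{odd}$, your per-component recolouring plays the role of the $N(x)$ recolouring, and \eqref{eq30} rightly replaces the unavailable identity \eqref{eq1}.

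One descriptive slip needs fixing. In $\{1,3,5,6\}$ one has $5+3=6+3=5+6=3$, so $5$ and $6$ are incomparable and lie \emph{below} $3$, and the order is not a chain. This correct order is what your computations $\varphi(\mathbf{u})=3$ and $a\leq ab$ actually use, so nothing else in the argument changes.
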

%\begin{proof}
%This is due to Ren et al.~\cite[Proposition 6.8]{RenLiuYueChen2026}.
%\end{proof}

The next proposition determines an equational basis for $\mathsf{V}(S_{(4, 369)})$ within $\mathsf{V}(TR_6)$.

\begin{proposition}\label{Sstar-subvariety}
The variety $\mathsf{V}(S_{(4, 369)})$ is the subvariety of $\mathsf{V}(TR_6)$ defined by the inequality~\eqref{eq31}.
\end{proposition}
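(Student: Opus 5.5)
We must show that $\mathsf{V}(S_{(4,369)})$ is the subvariety of $\mathsf{V}(TR_6)$ defined by \eqref{eq31}. By Proposition~\ref{basisR6}, this subvariety is defined by \eqref{eq2}, \eqref{eq3}, \eqref{eq1}, all $\sigma_n$, and \eqref{eq31}. By Lemma~\ref{lemma26082720}, $\mathsf{V}(S_{(4,369)})$ is defined by \eqref{eq2}, \eqref{eq3}, \eqref{eq30}, \eqref{eq31}. So there are two inclusions to prove.

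\textbf{$\mathsf{V}(S_{(4,369)})$ lies in the subvariety.} Since $S_{(4,369)}$ is isomorphic to the subalgebra $\{1,3,5,6\}$ of $TR_6$, it lies in $\mathsf{V}(TR_6)$. It also satisfies \eqref{eq31} by Lemma~\ref{lemma26082720}.

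\textbf{The subvariety lies in $\mathsf{V}(S_{(4,369)})$.} We must derive \eqref{eq30} from the identities of $\mathsf{V}(TR_6)$ together with \eqref{eq31}; the other identities \eqref{eq2}, \eqref{eq3}, \eqref{eq31} are already present. The derivation is short:
\[
x_1x_2x_3\overset{\eqref{eq1}}{\approx}x_4^2\overset{\eqref{eq2}}{\succeq}x_4 .
\]
Thus no real use of \eqref{eq31} is needed for this step.

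\textbf{Alternative route and main obstacle.} One could instead compute the subvariety directly and describe the inequalities it satisfies via the graph $\mathbb{G}_{\mathbf u}$ and a bipartition. Using the homomorphisms $\varphi,\psi$ from Proposition~\ref{wordprop}, a word $xy$ with $x\in A$, $y\in B$ in the same connected component would be forced below $\mathbf u$ by \eqref{eq31}, via paths of odd length. That route is unnecessary here, since both bases are already available; the only possible subtlety is checking that Lemma~\ref{lemma26082720} is stated within commutative ai-semirings, which it is. So the proof reduces to the two inclusions above, and I expect no real obstacle.
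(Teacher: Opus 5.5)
Your proposal is correct and matches the paper's proof. Like the paper, you compare the basis of Lemma~\ref{lemma26082720} with that of Proposition~\ref{basisR6}, and the only real step is deriving \eqref{eq30} from \eqref{eq1} and \eqref{eq2} via $x_4\preceq x_4^2\approx x_1x_2x_3$.
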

\begin{proof}
Since $S_{(4, 369)}$ lies in $\mathsf{V}(TR_6)$, it follows that $\mathsf{V}(S_{(4, 369)})$ is a subvariety of $\mathsf{V}(TR_6)$.
By Lemma~\ref{lemma26082720}, the commutative ai-semiring variety
$\mathsf{V}(S_{(4, 369)})$ is defined by the identities \eqref{eq2}, \eqref{eq3}, \eqref{eq30}, and \eqref{eq31}.
By Lemma~\ref{R6identities}, $TR_6$ satisfies \eqref{eq2}, \eqref{eq3}, and \eqref{eq1}.
Moreover, \eqref{eq30} is derivable from \eqref{eq2} and \eqref{eq1}, so $TR_6$ also satisfies \eqref{eq30}.
Therefore, $\mathsf{V}(S_{(4, 369)})$ is the subvariety of $\mathsf{V}(TR_6)$ defined by \eqref{eq31}.
\end{proof}

With the above preparations in hand, we now describe the subvariety lattice of $\mathsf{V}(TR_6)$.

\begin{proposition}\label{subvariety-lattice}
The subvariety lattice of $\mathsf{V}(TR_6)$ is a four-element chain (see Figure~\ref{lattice-figure}),
where $\mathbf T$ denotes the trivial variety.

\begin{figure}[ht]
\centering
\begin{tikzpicture}[
    x=1cm, y=1.2cm,
    lattice point/.style={
        circle, minimum size=5pt, inner sep=0pt,
        fill=white, draw=black!90, line width=0.8pt,
    }
]

% 先画线
\draw[line width=0.7pt, draw=black!60] (0,0) -- (0,1) -- (0,2) -- (0,3);

% 再画点（压住线）
\fill[lattice point] (0,0) circle (2pt) node[left=10pt] {$\mathbf T$};
\fill[lattice point] (0,1) circle (2pt) node[left=10pt] {$\mathsf{V}(T_2)$};
\fill[lattice point] (0,2) circle (2pt) node[left=10pt] {$\mathsf{V}(S_{(4, 369)})$};
\fill[lattice point] (0,3) circle (2pt) node[left=10pt] {$\mathsf{V}(TR_6)$};

% 高光点
\foreach \y in {0,1,2,3} {
    \fill[white, opacity=0.8] (0.12,\y+0.12) circle (1.2pt);
}

\end{tikzpicture}
\caption{The subvariety lattice of $\mathsf{V}(TR_6)$.}
\label{lattice-figure}
\end{figure}
\end{proposition}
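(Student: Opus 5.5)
We have to show that every subvariety $\mathcal V$ of $\mathsf{V}(TR_6)$ equals one of $\mathbf T$, $\mathsf{V}(T_2)$, $\mathsf{V}(S_{(4,369)})$ or $\mathsf{V}(TR_6)$. These four are distinct: $T_2$ fails \eqref{eq3} (take $x=0,y=0$: $0\le 1$ holds, but $x\preceq xy$ with $x=1$ in $T_2$ labels is fine) — better, $S_{(4,369)}$ fails \eqref{eq7} (take $x=3,y=4$), and $TR_6$ fails \eqref{eq31}, by Proposition~\ref{wordprop}, since $x_1x_4$ has length $2$ and the path $x_1x_2,x_2x_3,x_3x_4$ has no odd cycle. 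So the chain is strict, and it remains to show that every subvariety lies in this chain.

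The plan is to take any identity, equivalently any nontrivial inequality $\mathbf q\preceq\mathbf u$ (with $\mathbf q$ a word), that fails in $TR_6$. We then show that, modulo the basis of Proposition~\ref{basisR6}, it implies \eqref{eq31}. Next we take any such inequality that also fails in $S_{(4,369)}$ and show that it implies \eqref{eq7}. Finally, we note that any identity failing in $T_2$ together with \eqref{eq7} yields triviality. With these three facts, if $\mathcal V\neq\mathsf{V}(TR_6)$ then $\mathcal V$ satisfies some inequality failing in $TR_6$, hence \eqref{eq31}, so $\mathcal V\subseteq\mathsf{V}(S_{(4,369)})$ by Proposition~\ref{Sstar-subvariety}; iterating, and using Proposition~\ref{Sa-subvariety} and Proposition~\ref{minimal-subvariety}, gives the chain.

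For the first step, Proposition~\ref{wordprop} says that $\mathbf q\preceq\mathbf u$ failing in $TR_6$ forces the following: $\mathbf u=L_1(\mathbf u)+L_2(\mathbf u)$ with linear quadratic words, $\mathbb G_{\mathbf u}$ bipartite, and either $\ell(\mathbf q)\ge2$, or $\mathbf q$ is a variable not in $c(\mathbf u)$. In the latter case we substitute every variable of $\mathbf u$ by $x$ and $\mathbf q$ by $y$. Using $x\preceq xy$ and \eqref{eq1}, $\mathbf u$ collapses to $x$ or $x+x^2$, hence to $x^2$ or $x$, and we obtain $y\preceq x^2$ or $y\preceq x$. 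The inequality $y\preceq x$ makes the variety trivial. For $y\preceq x^2$, we multiply to get $yz\preceq x^2z\approx x^2$; together with $x^2\preceq$ anything, this should again force \eqref{eq7} and hence something below $\mathsf V(S_{(4,369)})$.

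In the main case $\ell(\mathbf q)\ge2$, we may replace $\mathbf q$ by $x^2$ when it is nonlinear or long, by \eqref{eq1}. We then substitute along the bipartition $(A,B)$: every $t\in A$ goes to $x_1x_2$-related variables. Concretely, we send $A$-vertices to $x_2$ or $x_4$ and $B$-vertices to $x_1$ or $x_3$, choosing the images so that $\mathbf u$ becomes a subsum of $x_1x_2+x_2x_3+x_3x_4$ plus single variables, while $\mathbf q$ becomes $x_1x_4$ or $x_1^2$. If $\mathbf q=ab$ with $a,b$ in opposite classes but $ab\notin\mathbf u$, we send $a\mapsto x_1$, $b\mapsto x_4$, the other $A$-vertices to $x_3$ and the other $B$-vertices to $x_2$; then every edge maps into $\{x_1x_2,x_3x_4,x_2x_3\}$, and linear summands are absorbed via \eqref{eq3}. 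If $\mathbf q$ joins two vertices of the same class, or involves a vertex outside $c(L_2(\mathbf u))$, a similar substitution yields $x_1x_3\preceq x_1x_2+x_2x_3$ or an inequality $x^2\preceq\ldots$, each of which implies \eqref{eq31} (multiply or rename). This case analysis is the main obstacle: we must choose substitutions carefully so that the resulting inequality is at least as strong as \eqref{eq31} and not trivial.

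The second step runs the same way with \eqref{eq31} added. Inequalities failing in $S_{(4,369)}$ should include one of the form $x_1x_2\preceq$ a sum of words whose content avoids the edge $x_1x_2$ in a tight way, or $x^2\preceq xy$-type. Using \eqref{eq31} to close paths, we reduce to $x^2\preceq xy$, which gives \eqref{eq7}, since $xy\preceq x^2$ already follows from \eqref{eq2}.
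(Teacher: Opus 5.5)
Your route differs from the paper's. The paper argues semantically: from $A\in\mathcal V$ with $a^2\neq ab$ it exhibits $S_{(4,369)}$ as a quotient of $\langle a,b\rangle=\{a,b,a+b,ab,a^2\}$. From $B\in\mathcal V$ with $ad\nleq ab+bc+cd$ it exhibits $TR_6$ as a quotient of $\langle a,b,c,d\rangle$ by a six-class congruence. You argue syntactically by substitutions. That can work, but as written it fails in the case $\mathbf q=z$ with $z\notin c(\mathbf u)$ and $L_2(\mathbf u)\neq\emptyset$.

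Sending every variable of $\mathbf u$ to $x$ and $z$ to $y$ gives $y\preceq x+x^2\approx x^2$. That is just \eqref{eq2}, which holds in $TR_6$, and so does $yz\preceq x^2$. Nothing obtained this way can force \eqref{eq7}. The case is not vacuous. For example, $z\preceq xy$ fails in $TR_6$ (take $x=5$, $y=6$, $z=1$) and in fact cuts out $\mathsf{V}(T_2)$, yet your substitution turns it into a law of $TR_6$. The missing idea is to substitute a product for the lone variable. Send $z\mapsto x^2$ (or $z\mapsto x_1x_4$, as in Case~1 of Proposition~\ref{prop:exclude-TR6-ST7}), one bipartition class to $x$, the other to $y$, and all remaining variables to $x$. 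Then \eqref{eq3} gives $x^2\preceq\varphi(\mathbf u)\preceq x+y+xy\approx xy$, which is \eqref{eq7}.

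Your second step is also not yet an argument. You never determine which inequalities fail in $S_{(4,369)}$, and the paper gives no such description (Lemma~\ref{lemma26082720} only supplies a basis). You can close it without one by bookkeeping in the first step. The same substitution yields \eqref{eq7} outright, provided the class containing a letter of $\mathbf q$ is the one sent to $x$, in these cases:
\begin{itemize}
\item $\mathbf q$ is long or nonlinear (its image is then long or nonlinear, hence $\approx x^2$ by \eqref{eq1} and \eqref{eq2});
\item $\mathbf q$ is a variable outside $c(\mathbf u)$;
\item $\mathbf q=ab$ with $a,b$ in the same class, or with one of them outside $c(L_2(\mathbf u))$;
\item $\mathbf q=ab$ with $a,b$ in different components of $\mathbb G_{\mathbf u}$ (flip the bipartition on one component).
\end{itemize}
The only case giving merely \eqref{eq31} is $\mathbf q\in\mathbb G_{\mathbf u}^{odd}$. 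Such an inequality follows from \eqref{eq31} through the inequalities $\delta_k$, so it holds in $S_{(4,369)}$. Hence any inequality failing in $S_{(4,369)}$ lands in an \eqref{eq7}-case.

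With these repairs your proof is complete. It has the merit of replacing the paper's omitted check that the partition $R_1,\dots,R_6$ of an up to $86$-element algebra is a congruence with short explicit substitutions. The paper's route, in exchange, shows concretely how $S_{(4,369)}$ and $TR_6$ sit inside any witness algebra. (Incidentally, $T_2$ does satisfy \eqref{eq3}, as it must, being in $\mathsf{V}(TR_6)$.)
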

\begin{proof}
Let $\mathcal V$ be a nontrivial subvariety of $\mathsf{V}(TR_6)$ distinct from $\mathsf{V}(T_2)$.
By Proposition~\ref{minimal-subvariety}, $\mathcal V$ properly contains $\mathsf{V}(T_2)$.
It follows from Proposition~\ref{Sa-subvariety} that $\mathcal V$ does not satisfy identity~\eqref{eq7}.
Hence there exists an ai-semiring $A\in\mathcal V$ with elements $a,b\in A$ such that $a^2\ne ab$, and so $a\ne b$.
We shall establish the following relations:
\[
ab<a^2,\qquad a<ab,\qquad b<ab,\qquad a+b\leq ab,
\]
\[
a+b < a^2,\qquad b \nleq a,\qquad a \nleq b.
\]
First, by the identity~\eqref{eq2}, we obtain $ab\leq a^2$.
Since $a^2\ne ab$, it follows that $ab<a^2$.
The identity~\eqref{eq3} gives $a\leq ab$ and $b\leq ab$. If $ab=a$, then
\[
ab=ab^2\overset{\eqref{eq1}}{=}a^2,
\]
a contradiction; hence $a<ab$. A symmetric argument shows that $b<ab$. Consequently, $a+b\leq ab$.
Combining this with $ab<a^2$, we have $a+b< a^2$.

By \eqref{eq2}, one can derive the identity
\begin{equation}\label{26082825}
x^2 \approx y^2.
\end{equation}
If $b \leq a$, then $b+a=a$, and so
\[
ab=(b+a)b=b^2+ab
\overset{\eqref{eq2}}{=}b^2\overset{\eqref{26082825}}{=}a^2,
\]
a contradiction; thus $b \nleq a$. Similarly, $a \nleq b$.

Let $\langle a,b\rangle$ denote the subalgebra of $A$ generated by $a$ and $b$.
From the relations established above, together with the identities~\eqref{eq2}, \eqref{eq3}, and \eqref{eq1},
it follows that $\langle a, b\rangle=\{a, b, a+b, ab, a^2\}$,
where $ab$ and $a+b$ may possibly coincide, but all other pairs among these five elements are distinct.
Set $I=\{ab,a+b\}$. It is readily verified that the equivalence relation $\rho$ on $\langle a, b\rangle$
whose only possible nonsingleton class is $I$ is a semiring congruence.
The quotient algebra $\langle a, b\rangle / \rho$ is easily seen to be isomorphic to $S_{(4, 369)}$
under the mapping $\{a\} \mapsto 3$, $\{b\} \mapsto 4$, $I \mapsto 1$, and $\{a^2\} \mapsto 2$.
Consequently, $S_{(4, 369)}$ lies in $\mathsf{V}(A)$, and therefore $\mathsf{V}(S_{(4, 369)})$ is a subvariety of $\mathcal V$.

If $\mathcal V\ne\mathsf{V}(S_{(4, 369)})$,
then by Proposition~\ref{Sstar-subvariety}, $\mathcal V$ does not satisfy the inequality~\eqref{eq31}.
Hence there exists an ai-semiring $B\in\mathcal V$ with elements $a,b,c,d\in B$ such that
\[
ad\nleq ab+bc+cd.
\]
Our aim is to show that $\mathsf{V}(B)$ contains $TR_6$, which then implies that $\mathcal V=\mathsf{V}(TR_6)$.

Let $\langle a,b,c,d\rangle$ denote the subalgebra of $B$ generated by $a,b,c,d$.
Then by the identities~\eqref{eq2}, \eqref{eq3}, \eqref{eq1}, and $\sigma_n$ for all $n \geq 1$,
every element of $\langle a, b, c, d\rangle$ is a finite sum of elements from the set
\[
\{a, b, c, d, ab, ac, ad, bc, bd, cd, a^2\}.
\]
In particular, $a^2$ acts as both the additive maximum element and the multiplicative zero.
One can verify that $\langle a,b,c,d\rangle$ contains at most $86$ elements.
Consider the following subsets of $\langle a,b,c,d\rangle$:
\[
\begin{aligned}
R_2 &= \{a,\; a+c\}, \\
R_3 &= \bigl\{ab,\; ab+c,\; ab+d,\; ab+c+d,\; bc,\; bc+a,\; bc+d,\; bc+a+d, \\
     &\qquad cd,\; cd+a,\; cd+b,\; cd+a+b,\; ab+bc,\; ab+bc+d,\; bc+cd,\; bc+cd+a, \\
     &\qquad ab+cd,\; ab+bc+cd,\; a+b,\; a+d,\; a+b+d,\; b+c,\; c+d,\; b+c+d, \\
     &\qquad a+b+c,\; a+c+d,\; a+b+c+d \bigr\}, \\
R_4 &= \{d,\; b+d\}, \\
R_5 &= \{c\}, \\
R_6 &= \{b\}, \\
R_1 &= \langle a,b,c,d\rangle \setminus (R_2 \cup R_3 \cup R_4 \cup R_5 \cup R_6).
\end{aligned}
\]

A direct verification (which is straightforward but somewhat lengthy) confirms that the subsets $R_1,\ldots,R_6$ are pairwise disjoint and that the equivalence relation $\tau$ with classes $R_1,\ldots,R_6$ is compatible with addition and multiplication;
we omit the details for brevity. Consequently, $\tau$ is a semiring congruence on $\langle a,b,c,d\rangle$.

Finally, one can show that the quotient algebra $\langle a, b, c, d \rangle/\tau$
%\[
%\langle a, b, c, d \rangle/\rho=\{R_i \mid 1\leq i\leq6\}
%\]
is isomorphic to $TR_6$,
where the isomorphism sends each congruence class $R_i$ to the element $i$ of $TR_6$, $1\leq i\leq6$.
Thus $\mathsf{V}(B)$ contains $TR_6$, and so $\mathcal V=\mathsf{V}(TR_6)$.
\end{proof}

We now arrive at the main result of this section.

\begin{theorem}\label{limit-variety}
The variety $\mathsf{V}(TR_6)$ is a limit variety.
\end{theorem}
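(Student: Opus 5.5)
By definition, a limit variety is a nonfinitely based variety all of whose proper subvarieties are finitely based. The plan is to verify these two halves separately, using the results already established.

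\emph{Nonfinite basedness.} This is exactly Proposition~\ref{nfb}: $TR_6$ is nonfinitely based, hence so is $\mathsf{V}(TR_6)$.

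\emph{Proper subvarieties.} By Proposition~\ref{subvariety-lattice}, the subvariety lattice of $\mathsf{V}(TR_6)$ is the chain
\[
\mathbf T\subset\mathsf{V}(T_2)\subset\mathsf{V}(S_{(4,369)})\subset\mathsf{V}(TR_6).
\]
So the proper subvarieties are exactly $\mathbf T$, $\mathsf{V}(T_2)$ and $\mathsf{V}(S_{(4,369)})$, and I check each in turn.
\begin{enumerate}[$(1)$]
\item The trivial variety is defined by the single identity $x\approx y$.
\item $\mathsf{V}(T_2)$ is defined by the finitely many identities \eqref{eq50} and \eqref{eq51}, together with the finitely many ai-semiring axioms. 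Equivalently, by Proposition~\ref{Sa-subvariety}, it is the subvariety of $\mathsf{V}(TR_6)$ defined by \eqref{eq7}.
\item By Lemma~\ref{lemma26082720}, $\mathsf{V}(S_{(4,369)})$ is defined within commutative ai-semirings by the four identities \eqref{eq2}, \eqref{eq3}, \eqref{eq30}, \eqref{eq31}.
\end{enumerate}

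Item~(3) needs one small point made explicit. A basis "within commutative ai-semirings" becomes a finite basis in the signature of ai-semirings once we add $xy\approx yx$ and the finite set of ai-semiring axioms. The same remark applies to any basis stated relative to commutative ai-semirings.

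I would stress that one must not rely on the relative bases (e.g.\ "$\mathsf{V}(TR_6)$ plus \eqref{eq31}"). Those inherit the infinite basis of $\mathsf{V}(TR_6)$ and so do not by themselves give finite bases. The absolute bases cited in (2) and (3) are what make the argument go through.

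Hence every proper subvariety is finitely based, while $\mathsf{V}(TR_6)$ is not, so $\mathsf{V}(TR_6)$ is a limit variety. There is no real obstacle: all the substantive work is in Propositions~\ref{nfb} and~\ref{subvariety-lattice}.
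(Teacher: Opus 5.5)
Your proposal is correct and follows the paper's proof: nonfinite basedness comes from Proposition~\ref{nfb}, the four-element chain from Proposition~\ref{subvariety-lattice}, and the finite bases of the proper subvarieties from the cited basis of $\mathsf{V}(T_2)$ (Shao--Ren) and from Lemma~\ref{lemma26082720} for $\mathsf{V}(S_{(4,369)})$. You are also right that the absolute bases are needed, not the relative ones in Propositions~\ref{Sa-subvariety} and~\ref{Sstar-subvariety}; this is exactly why the paper invokes Lemma~\ref{lemma26082720} at this point.
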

\begin{proof}
By Propositions~\ref{nfb} and~\ref{subvariety-lattice}, together with Lemma~\ref{lemma26082720},
$\mathsf{V}(TR_6)$ itself is nonfinitely based, while all of its proper subvarieties are finitely based.
Therefore, $\mathsf{V}(TR_6)$ is a limit variety.
\end{proof}

\section{The variety $\mathsf{V}(SR_6, TR_6)$}
Based on the results of the previous sections, it is natural to study a larger variety,
namely the variety $\mathsf{V}(SR_6, TR_6)$ generated by $SR_6$ and $TR_6$.
In this section, we first characterize the inequalities satisfied by
$\mathsf{V}(SR_6, TR_6)$ and then show that this variety is also nonfinitely based.
We conclude the section by introducing two important members of $\mathsf{V}(SR_6, TR_6)$
and establishing some of their properties.

We begin by showing that the variety $\mathsf{V}(SR_6, TR_6)$ can in fact be generated by a single seven-element ai-semiring.
Let $ST_7$ denote the seven-element ai-semiring whose Cayley tables are given in Table~\ref{tab:ST_7}.

\begin{table}[htbp]
\centering
\caption{The Cayley tables of $ST_7$}\label{tab:ST_7}
\small
\renewcommand{\arraystretch}{1.08}
\setlength{\tabcolsep}{4.8pt}
\begin{tabular}{c|ccccccc}
$+$&1&2&3&4&5&6&7\\ \hline
1&1&1&1&1&1&1&1\\
2&1&2&7&7&2&7&7\\
3&1&7&3&7&7&7&7\\
4&1&7&7&4&7&4&7\\
5&1&2&7&7&5&7&7\\
6&1&7&7&4&7&6&7\\
7&1&7&7&7&7&7&7
\end{tabular}
\hspace{1.2cm}
\begin{tabular}{c|ccccccc}
$\cdot$&1&2&3&4&5&6&7\\ \hline
1&1&1&1&1&1&1&1\\
2&1&1&1&1&1&3&1\\
3&1&1&1&1&1&1&1\\
4&1&1&1&1&3&1&1\\
5&1&1&1&3&1&3&1\\
6&1&3&1&1&3&1&1\\
7&1&1&1&1&1&1&1
\end{tabular}
\end{table}

\begin{proposition}\label{prop:C7-join}
$\mathsf{V}(ST_7)=\mathsf{V}(SR_6, TR_6)$.
\end{proposition}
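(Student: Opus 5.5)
The plan is to prove the two inclusions separately.

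For $\mathsf{V}(SR_6,TR_6)\subseteq\mathsf{V}(ST_7)$, I will exhibit $SR_6$ and $TR_6$ as homomorphic images of subalgebras of $ST_7$; in fact I expect both to be quotients of $ST_7$ itself. The multiplication of $ST_7$ restricted to $\{1,\dots,6\}$ is exactly the common multiplicative table of $SR_6$ and $TR_6$. The new element $7$ multiplies everything to $1$ and lies just below $1$ additively. In $ST_7$ the sums of incomparable elements among $2,\dots,6$ equal $7$, whereas in $SR_6$ they equal $1$ and in $TR_6$ many of them equal $3$.

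First I take the equivalence $\theta_1$ on $ST_7$ whose only nonsingleton class is $\{1,7\}$. Adding or multiplying by $1$ versus $7$ gives $1$ or $7$ in every case except sums with $1$, which give $1$. Hence $\theta_1$ is a congruence, and the quotient should be $SR_6$; I will compare the tables entrywise, e.g. $2+3=7\mapsto 1$, matching $SR_6$.

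Second I take $\theta_2$ with the only nonsingleton class $\{3,7\}$. To check compatibility:
\begin{itemize}
\item products: $3\cdot x=1=7\cdot x$ for all $x$;
\item sums: $3+x$ and $7+x$ are both $7$ or $3$ for $x\neq 1$, and both $1$ for $x=1$.
\end{itemize}
So $\theta_2$ is a congruence. The quotient, with $\{3,7\}\mapsto 3$, should reproduce the additive table of $TR_6$. For instance $2+4=7\mapsto3$ and $4+6=4$, as required.

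For $\mathsf{V}(ST_7)\subseteq\mathsf{V}(SR_6,TR_6)$, I will embed $ST_7$ into $SR_6\times TR_6$ via $i\mapsto(i,i)$ for $i\le 6$ and $7\mapsto(1,3)$. Because $\theta_1\cap\theta_2$ is the identity relation, $ST_7$ is a subdirect product of the two quotients, so this map is automatically an injective homomorphism. It is injective because $(1,3)$ is not of the form $(i,i)$.

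The only real work is the entrywise verification that the two quotient tables match $SR_6$ and $TR_6$ exactly. This is finite and routine, and I expect it to be the main (if mild) obstacle, mainly in getting the additive table right. If some entry fails, I would instead search for subalgebras of $ST_7$ isomorphic to the quotients, but the tables suggest this will not be needed.
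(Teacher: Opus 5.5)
Your proposal is correct and is essentially the paper's argument: the paper also realizes $ST_7$ as a subdirect product of $SR_6$ and $TR_6$ via the congruences with nontrivial blocks $\{1,7\}$ and $\{3,7\}$, which yields both inclusions at once. Your compatibility checks, the quotient-table comparisons, and the explicit embedding $i\mapsto(i,i)$, $7\mapsto(1,3)$ all hold up.
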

\begin{proof}
It is straightforward to check that $ST_7$ is isomorphic to a subdirect product
of $SR_6$ and $TR_6$ via the congruences defined by the nontrivial blocks
$\{1, 7\}$ and $\{3, 7\}$, respectively.
Hence $\mathsf{V}(ST_7)=\mathsf{V}(SR_6, TR_6)$.
\end{proof}

In view of the equality in Proposition~\ref{prop:C7-join}, we shall henceforth study $\mathsf{V}(ST_7)$.
The following result presents an infinite equational basis for $ST_7$.

\begin{proposition}\label{prop:basis-C7}
The commutative ai-semiring variety $\mathsf{V}(ST_7)$ is defined by the identities
$\sigma_n$ $(n\geq1)$, \eqref{eq2}, \eqref{eq1}, and
\begin{equation}\label{eq:Cbridge}
x_1 \preceq x_2+x_2x_3+x_1x_4.
\end{equation}
\end{proposition}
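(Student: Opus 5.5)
The plan follows the pattern of Propositions~\ref{wordprop} and~\ref{basisR6}. There are three stages: check that $ST_7$ satisfies the listed identities, characterize the nontrivial inequalities $\mathbf q\preceq\mathbf u$ (with $\mathbf q$ a word) that hold in $ST_7$, and derive each such inequality from the proposed basis.

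\emph{Soundness.} By Proposition~\ref{prop:C7-join} it suffices to check the identities in $SR_6$ and in $TR_6$ separately.
\begin{itemize}
\item The $\sigma_n$ hold in both algebras by Lemma~\ref{R6identities} and Remark~\ref{remark26082910}.
\item The identities \eqref{eq2} and \eqref{eq1} are finite checks against the common multiplicative table, together with the fact that $1$ is the additive top in both algebras.
\item In $TR_6$, \eqref{eq:Cbridge} follows from $x_1\preceq x_1x_4$.
\item In $SR_6$, \eqref{eq:Cbridge} is a finite check. I expect it to go as follows. If $\varphi(x_1x_4)=1$ or $\varphi(x_2x_3)=1$, the right side is $1$. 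Otherwise $\varphi(x_1),\varphi(x_2)\in\{2,4,5,6\}$, and since distinct elements of $\{2,4,5,6\}$ sum to $1$ except $2+5=2$ and $4+6=4$, one compares $\varphi(x_2)+3$ with $\varphi(x_1)$ case by case.
\end{itemize}

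\emph{Characterization.} I expect $ST_7$ to satisfy $\mathbf q\preceq\mathbf u$ if and only if one of conditions (i)--(iii) of Proposition~\ref{wordprop} holds, or
\begin{itemize}
\item[(iv$'$)] $\mathbf q=x$ is a variable with $x\in c(L_2(\mathbf u))$, and $L_1(\mathbf u)$ contains some variable $y\in c(L_2(\mathbf u))$.
\end{itemize}
Sufficiency of (iv$'$) is a direct derivation: $\mathbf u\succeq y+yz+xw$ for suitable words in $L_2(\mathbf u)$, and then \eqref{eq:Cbridge} applies. The case $y=x$ is trivial. Conditions (i)--(iii) are handled exactly as in Cases~1--3 of Proposition~\ref{basisR6}, using only \eqref{eq2}, \eqref{eq1} and $\sigma_k$. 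This disposes of derivability once the characterization is established.

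\emph{Main obstacle: necessity.} Assume (i)--(iii) fail, so that $\mathbf u=L_1(\mathbf u)+L_2(\mathbf u)$ with $\mathbb G_{\mathbf u}$ bipartite with bipartition $(A,B)$. I will build separating homomorphisms into $SR_6$ or $TR_6$ as needed.
\begin{itemize}
\item Words $\mathbf q$ of length $2$, and variables outside $c(\mathbf u)$, are excluded by the two $TR_6$ homomorphisms $\varphi,\psi$ from the proof of Proposition~\ref{wordprop}, since $\mathsf V(TR_6)\subseteq\mathsf V(ST_7)$.
\item That leaves the case $\mathbf q=x$ with $x\in c(\mathbf u)$. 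If $x\in L_1(\mathbf u)$, the inequality is trivial.
\item If $x\notin c(L_2(\mathbf u))$, then $x$ occurs only as a summand, hence $x\in L_1(\mathbf u)$ and the inequality is again trivial. So we may take $x\in c(L_2(\mathbf u))\setminus L_1(\mathbf u)$.
\item Suppose no variable of $L_1(\mathbf u)$ lies in $c(L_2(\mathbf u))$. I will map into $SR_6$: send $A\mapsto5$ and $B\mapsto6$ (or $2,4$ suitably), so that every edge goes to $3$, and send the variables of $L_1(\mathbf u)$ to $3$. Then $\varphi(\mathbf u)=3$ in $SR_6$, while $\varphi(x)\in\{5,6\}$ and $5,6\nleq3$ in $SR_6$, which gives the contradiction.
\end{itemize}
The delicate point is making sure this $SR_6$ assignment is well defined and really gives $\varphi(\mathbf u)=3$: in $SR_6$ the element $3$ is only below $1$, and sums such as $5+6=1$ must not arise. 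I would check this carefully against the additive table of $SR_6$. If it fails, I will refine condition (iv$'$) accordingly, for instance by requiring $y$ to lie in the same connected component of $\mathbb G_{\mathbf u}$ as $x$, and adjust both the homomorphism and the derivation to match.
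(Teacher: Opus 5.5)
Your proposal is correct and follows essentially the same route as the paper. You reduce via Proposition~\ref{wordprop} (using $TR_6\in\mathsf{V}(ST_7)$) to the case $\mathbf q=z$ a variable, rule out $c(L_1(\mathbf u))\cap c(L_2(\mathbf u))=\emptyset$ with the assignment $A\mapsto5$, $B\mapsto6$, $c(L_1(\mathbf u))\mapsto3$, and then derive $z\preceq x+xy+zt\preceq\mathbf u$ from \eqref{eq:Cbridge}; the paper evaluates that assignment in $ST_7$ and you in its quotient $SR_6$, which is equivalent. The point you flag as delicate is harmless, since every summand of $\mathbf u$ is sent to $3$ and $3+3=3$, so no refinement of (iv$'$) is needed; note also that your alternative values $2,4$ would not work in $SR_6$, where $2\cdot4=1$.
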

\begin{proof}
It is routine to check that $ST_7$ satisfies the identities~\eqref{eq2}, \eqref{eq1}, and \eqref{eq:Cbridge}.
By Lemma~\ref{R6identities} and Remark~\ref{remark26082910},
both $SR_6$ and $TR_6$ satisfy the inequality $\sigma_n$ for all $n \geq 1$.
It follows from Proposition~\ref{prop:C7-join} that $ST_7$ also satisfies the inequality $\sigma_n$ for all $n \geq 1$.

It remains to show that every inequality satisfied by $ST_7$ is derivable from
the identities \eqref{eq2}, \eqref{eq1}, \eqref{eq:Cbridge}, and $\sigma_n$ $(n\geq1)$.
Let $\mathbf q\preceq\mathbf u$ be such a
nontrivial inequality, where $\bu=\bu_1+\bu_2+\cdots+\bu_n$ with $\bu_i, \bq \in X_c^+$ for all $1\leq i \leq n$.
Since $TR_6$ is a homomorphic image of $ST_7$,
it follows that $TR_6$ also satisfies $\mathbf q\preceq\mathbf u$.
By Proposition~\ref{wordprop}, the following four cases arise:
\begin{enumerate}[(\rm i)]
\item $\ell(\mathbf u_i) \geq 3$ for some $\mathbf u_i \in \mathbf u$;
\item $\mathbf u$ contains a non-linear word;
\item the graph $\mathbb{G}_{\mathbf u}$ contains an odd cycle;
\item $\ell(\mathbf q)=1$ and $c(\mathbf q)\subseteq c(\mathbf u)$.
\end{enumerate}
The proofs for the first three cases are identical to those in the proof of Proposition~\ref{basisR6}
(indeed, they rely only on the identities $\sigma_n$ $(n \geq 1)$, \eqref{eq2}, and \eqref{eq1}),
and are therefore omitted.

It remains to consider the fourth case.
Now suppose that $\ell(\mathbf q)=1$ and $c(\mathbf q)\subseteq c(\mathbf u)$. We write $\bq=z$.
Since Cases 1--3 are already excluded, we may also assume that
$\ell(\mathbf u_i) \leq 2$ for all $\mathbf u_i \in \mathbf u$,
$\bu_i$ is a linear word for all $\bu_i\in \bu$, and $\mathbb{G}_{\mathbf u}$ contains no odd cycle.
Then $\bu=L_1(\bu)+L_2(\bu)$, $zt\in \bu$ for some $t \in X$, and $\mathbb G_{\mathbf u}$ is bipartite.
So we can write $L_2(\mathbf{u})=x_1y_1+x_2y_2+\cdots+x_my_m$,
where $\{x_1, x_2, \ldots, x_m\}$ and $\{y_1, y_2, \ldots, y_m\}$ are disjoint.
Set $A=\{x_1, x_2, \ldots, x_m\}$ and $B=\{y_1, y_2, \ldots, y_m\}$.
We claim that $c(L_1(\mathbf u))\cap c(L_2(\mathbf u)) \neq \emptyset$.
Suppose that this is not true.
Define a semiring homomorphism $\varphi:P_f(X_c^+)\to ST_7$ by
\[
\varphi(v)=
\begin{cases}
5,&v\in A,\\
6,&v\in B,\\
3,&v\in c(L_1(\mathbf u)),\\
1,&\text{otherwise}.
\end{cases}
\]
Since $c(L_1(\mathbf u))\cap c(L_2(\mathbf u)) = \emptyset$,
it follows that $\varphi$ is well-defined.
Now we have
\[
\varphi(\mathbf u)=\varphi(L_1(\bu)+L_2(\bu))=\varphi(L_1(\bu))+\varphi(L_2(\bu))=3+5\cdot 6=3+3=3,
\]
whereas $\varphi(\bq)=\varphi(z)=5$ or $6$.
Since $5 \nleq 3$ and $6 \nleq 3$ in $ST_7$, it follows that $\varphi(\bq) \nleq \varphi(\mathbf u)$,
which contradicts the fact that $\bq \preceq \bu$ holds in $ST_7$.
Thus $c(L_1(\mathbf u))\cap c(L_2(\mathbf u)) \neq \emptyset$,
and so $x \in c(L_1(\mathbf u))\cap c(L_2(\mathbf u))$ for some $x\in X$.
Hence $x+xy$ is an additive subterm of $\bu$ for some $y \in X$, and so
\[
\mathbf u\succeq x+xy+zt
\overset{\eqref{eq:Cbridge}}{\succeq}z=\bq.
\]
This derives the inequality $\bu \succeq \bq$.
\end{proof}

\begin{remark}
We shall use the following facts without further comment.
By Proposition~\ref{prop:basis-C7}, in any ai-semiring $S$ belonging to $\mathsf{V}(ST_7)$,
the square of any element, as well as the product of three or more elements,
is both the additive maximum element and the multiplicative zero element.
\end{remark}

We now apply Proposition~\ref{prop:basis-C7} to determine the finite basis status of $ST_7$.

\begin{corollary}\label{cor:C7-nfb}
The ai-semiring $ST_7$ is nonfinitely based.
\end{corollary}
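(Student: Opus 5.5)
We follow the template of the proof of Proposition~\ref{nfb}. Let $\Sigma$ consist of \eqref{eq2}, \eqref{eq1}, \eqref{eq:Cbridge} and $\sigma_n$ $(n\geq1)$; by Proposition~\ref{prop:basis-C7} this is an equational basis of $\mathsf{V}(ST_7)$. By Lemma~\ref{compactness} it is enough to show that no finite subset of $\Sigma$ defines $\mathsf{V}(ST_7)$. Given a finite $\Sigma'\subseteq\Sigma$, choose $n\geq2$ larger than every $k$ with $\sigma_k\in\Sigma'$, and let $\Sigma_n$ be \eqref{eq2}, \eqref{eq1}, \eqref{eq:Cbridge} together with $\sigma_k$ for $k<n$. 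It then suffices to show that $\Sigma_n$ does not derive $\sigma_n$, namely $x\preceq\mathbf{u}^{(n)}$, where $x\notin c(\mathbf{u}^{(n)})$.

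Suppose a derivation $\mathbf{u}^{(n)}=\mathbf{t}_1,\ldots,\mathbf{t}_m=\mathbf{u}^{(n)}+x$ as in Lemma~\ref{eqyield} exists. The key observation is that \emph{no} elementary step can be applied to $\mathbf{u}^{(n)}$ at all, so the derivation cannot even begin. This is simpler than in Proposition~\ref{nfb}, since \eqref{eq3} is absent from the basis.

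To see this, check every identity of $\Sigma_n$ in both directions:
\begin{itemize}
\item \eqref{eq2} and \eqref{eq1}: $\mathbf{u}^{(n)}$ is $x^2$-free and $x_1x_2x_3$-free, because it contains no squares and no words of length at least $3$. Also, \eqref{eq2} written as an identity is $y^2\approx y^2+x$, so both of its sides contain $y^2$.
\item $\sigma_k$ for $k<n$: both sides contain $\mathbf{u}^{(k)}$, and $\mathbf{u}^{(n)}$ is $\mathbf{u}^{(k)}$-free by \cite[Proposition~3.1]{LyuRenYue2026}.
\item \eqref{eq:Cbridge}: as an identity this is $x_2+x_2x_3+x_1x_4\approx x_2+x_2x_3+x_1x_4+x_1$, and both sides contain the subterm $x_2+x_2x_3$.
\end{itemize}
By Lemma~\ref{freelemma}, it suffices in each case to show that $\mathbf{u}^{(n)}$ is free of that common subterm.

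The step needing care is showing that $\mathbf{u}^{(n)}$ is $(x_2+x_2x_3)$-free. Suppose $\mathbf{p}\varphi(x_2+x_2x_3)$ were an additive subterm of $\mathbf{u}^{(n)}$.
\begin{itemize}
\item If $\mathbf{p}$ is nonempty, then $\mathbf{p}\varphi(x_2)$ has length at least $2$ and $\mathbf{p}\varphi(x_2x_3)$ has length at least $3$. This is impossible, since every word of $\mathbf{u}^{(n)}$ has length $2$.
\item If $\mathbf{p}$ is empty, then $\varphi(x_2)$ must consist of words of $\mathbf{u}^{(n)}$, all of length $2$. But then $\varphi(x_2x_3)$ contains words of length at least $3$, again impossible.
\end{itemize}
Hence $\mathbf{u}^{(n)}$ is $(x_2+x_2x_3)$-free.

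Consequently, no step $\mathbf{t}_1=\mathbf{p}_1\varphi_1(\mathbf{s}_1)+\mathbf{r}_1$ with $\mathbf{s}_1\approx\mathbf{s}_1'$ in $\Sigma_n$ (in either orientation) exists. So $m=1$, which forces $\mathbf{u}^{(n)}=\mathbf{u}^{(n)}+x$, a contradiction since $x\notin c(\mathbf{u}^{(n)})$. Thus $\Sigma'$ does not define $\mathsf{V}(ST_7)$, and $ST_7$ is nonfinitely based.

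Had freeness failed for some identity, the fallback would be the induction of Proposition~\ref{nfb}: show that every $\mathbf{t}_i$ has the form $\mathbf{u}^{(n)}+\mathbf{w}_i$ with $\mathbf{w}_i$ a sum of variables from $c(\mathbf{u}^{(n)})$. The freeness check above makes this unnecessary.
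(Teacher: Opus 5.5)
Your proposal is correct and is essentially the paper's own proof. The paper likewise shows that $\mathbf{u}^{(n)}$ is $x^2$-free, $x_1x_2x_3$-free, $(x+xy)$-free and $\mathbf{u}^{(k)}$-free for $k<n$, so no identity of $\Sigma_n$ applies to $\mathbf{u}^{(n)}$ in either direction and Lemma~\ref{eqyield} rules out a derivation of $\sigma_n$; your explicit check of $(x_2+x_2x_3)$-freeness and your appeal to Lemma~\ref{freelemma} fill in details the paper leaves to the reader.
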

\begin{proof}
Let $\Sigma$ denote the set $\{\eqref{eq2},\eqref{eq1},\eqref{eq:Cbridge}\}\cup\{\sigma_n\mid n\geq1\}$.
By Proposition~\ref{prop:basis-C7}, $\Sigma$ is an equational basis of $ST_7$.
By Lemma~\ref{compactness}, it suffices to show that no finite subset of $\Sigma$ defines
$\mathsf{V}(ST_7)$.

Let $\Sigma'$ be an arbitrary finite subset of $\Sigma$. Choose $n\geq2$ greater
than every index $k$ for which $\sigma_k\in\Sigma'$, and put
\[
\Sigma_n=\{\eqref{eq2},\eqref{eq1},\eqref{eq:Cbridge}\}
\cup\{\sigma_k\mid1\leq k<n\}.
\]
Then $\Sigma'$ is a subset of $\Sigma_n$ and $\sigma_n$ is not in $\Sigma_n$.
To prove that $\Sigma'$ cannot define $\mathsf{V}(ST_7)$,
we show that $\Sigma_n$ cannot derive $\sigma_n$.

Indeed, observe that the term $\mathbf{u}^{(n)}$ is $x^2$-free,
$x_1x_2x_3$-free, and $(x+xy)$-free. Moreover, by
\cite[Proposition~3.1]{LyuRenYue2026}, $\mathbf{u}^{(n)}$ is
$\mathbf{u}^{(k)}$-free for every $1\leq k<n$.
Consequently, for any identity in $\Sigma_n$ and any term $\bs$ occurring on either side of it,
there do not exist terms $\bp, \br$ and a substitution $\varphi$ such that
\[
\mathbf{u}^{(n)} = \bp \varphi(\bs) + \br.
\]
By Lemma~\ref{eqyield}, $\Sigma_n$ cannot derive $\sigma_n$.
Therefore, $ST_7$ is nonfinitely based.
\end{proof}

\begin{corollary}\label{prop:define-SR6-ST7}
The variety $\mathsf{V}(SR_6)$ is the subvariety of $\mathsf{V}(ST_7)$
defined by the identity
\begin{equation}\label{id2608090150}
x^2\approx x+xy.
\end{equation}
\end{corollary}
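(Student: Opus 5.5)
The plan is to argue syntactically, in the same way as Propositions~\ref{basisR6} and~\ref{prop:basis-C7}. First, $SR_6$ lies in $\mathsf{V}(ST_7)$ by Proposition~\ref{prop:C7-join}, and $SR_6$ satisfies \eqref{id2608090150}, as noted in the Introduction (a direct check of the Cayley tables). So $\mathsf{V}(SR_6)$ is contained in the subvariety $\mathcal W$ of $\mathsf{V}(ST_7)$ defined by \eqref{id2608090150}.

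For the converse, I show that every nontrivial inequality $\bq\preceq\bu$ satisfied by $SR_6$ is derivable from the basis of Proposition~\ref{prop:basis-C7} together with \eqref{id2608090150}. Write $\bu=\bu_1+\cdots+\bu_n$ with $\bu_i,\bq\in X_c^+$. The cases where $\bu$ has a word of length at least $3$, a non-linear word, or a graph $\mathbb G_{\bu}$ with an odd cycle are handled exactly as in Proposition~\ref{basisR6}, since they use only \eqref{eq2}, \eqref{eq1} and $\sigma_n$. There is one new case: some variable $x$ lies in $c(L_1(\bu))\cap c(L_2(\bu))$, so that $x+xy$ is an additive subterm of $\bu$. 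Then
\[
\bu\succeq x+xy\overset{\eqref{id2608090150}}{\approx}x^2\overset{\eqref{eq2}}{\succeq}\bq .
\]

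So it remains to treat the residual case. Here $\bu=L_1(\bu)+L_2(\bu)$, all words are linear, $\mathbb G_{\bu}$ is bipartite with bipartition $(A,B)$, and $c(L_1(\bu))\cap c(L_2(\bu))=\emptyset$. I show that in this case $SR_6$ does not satisfy any nontrivial $\bq\preceq\bu$, which yields a contradiction. Define $\varphi\colon P_f(X_c^+)\to SR_6$ by sending $A\mapsto5$, $B\mapsto6$, $c(L_1(\bu))\mapsto3$, and every other variable to $1$. This is well defined because of the disjointness. Each edge word then goes to $5\cdot6=3$ and each word of $L_1(\bu)$ goes to $3$, so $\varphi(\bu)=3$.

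In $SR_6$ we have $3+t=1$ for all $t\neq3$, so $\varphi(\bq)\le3$ forces $\varphi(\bq)=3$. Reading off the multiplication table, $\bq$ must then be either a single variable of $c(L_1(\bu))$ or a word $xy$ with $x\in A$ and $y\in B$. In the first subcase $\bq\in\bu$, so the inequality is trivial. In the second subcase nontriviality gives $xy\notin L_2(\bu)$. I then use the map $\psi$ from Proposition~\ref{wordprop}, modified so that $\psi$ sends $c(L_1(\bu))$ to $3$: namely $x\mapsto2$, $y\mapsto4$, $A\setminus\{x\}\mapsto5$, $B\setminus\{y\}\mapsto6$, and all other variables to $1$. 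Every edge of $\mathbb G_{\bu}$ then goes to one of the products $2\cdot6$, $5\cdot4$ or $5\cdot6$, each equal to $3$, so $\psi(\bu)=3$. On the other hand $\psi(\bq)=2\cdot4=1\nleq3$, a contradiction.

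The main obstacle is verifying these value computations against the $SR_6$ tables. In particular one must check that $\{2,6\}$, $\{4,5\}$ and $\{5,6\}$ are the only pairs with product $3$ (these agree with the $TR_6$ multiplication table), that $2\cdot4=1$, and that $3$ is comparable only with $1$ in the additive order. These facts guarantee that the candidate $\bq$ are exactly as listed.
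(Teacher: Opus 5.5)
Your argument is correct, but it takes a different route from the paper.

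The paper does not analyse the inequalities of $SR_6$ directly. It imports the equational basis of $\mathsf{V}(SR_6)$ from \cite[Proposition~4.5]{LyuRenYue2026}, namely \eqref{id2608090150} together with \eqref{id2608090151}, \eqref{id2608090152} and \eqref{id2608090153}. It then only checks that the last three follow from \eqref{eq2}, \eqref{eq1} and $\sigma_n$, which hold in $\mathsf{V}(ST_7)$.

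You instead reprove, in the style of Proposition~\ref{wordprop}, a complete description of the nontrivial inequalities satisfied by $SR_6$. One of the following must hold: $\bu$ contains a word of length at least $3$, $\bu$ contains a non-linear word, $\mathbb{G}_{\bu}$ has an odd cycle, or some variable lies in $c(L_1(\bu))\cap c(L_2(\bu))$. The last case is exactly where \eqref{id2608090150} enters. Your table facts are right: $\{2,6\}$, $\{4,5\}$, $\{5,6\}$ are the only pairs with product $3$, $2\cdot4=1$, and $t\le 3$ in $SR_6$ only for $t=3$. The separating maps $\varphi$ and $\psi$ work, including when $L_1(\bu)$ or $L_2(\bu)$ is empty.

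Comparing the two:
\begin{itemize}
\item The paper's proof is shorter but rests on the external basis theorem.
\item Yours needs only the Cayley tables of $SR_6$, Proposition~\ref{prop:C7-join}, and the fact that $ST_7$ satisfies \eqref{eq2}, \eqref{eq1} and $\sigma_n$.
\item As a by-product, your derivations never use \eqref{eq:Cbridge}. They therefore show that $\mathsf{V}(SR_6)$ is already defined by \eqref{eq2}, \eqref{eq1}, \eqref{id2608090150} and $\sigma_n$ $(n\geq 1)$. This is consistent with the paper, since \eqref{eq:Cbridge} follows from \eqref{id2608090150} and \eqref{eq2}.
\end{itemize}
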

\begin{proof}
By \cite[Proposition~4.5]{LyuRenYue2026},
the commutative ai-semiring variety $\mathsf{V}(SR_6)$ is defined by the identity
\eqref{id2608090150}, together with the identities
\begin{align}
&x^3\approx x^2; \label{id2608090151}\\
&x_1\preceq x_2x_3x_4; \label{id2608090152}\\
& x_1x_2\cdots x_{2n+1}\preceq \bu^{(n)} \quad (n\geq 1). \label{id2608090153}
\end{align}
By Proposition~\ref{prop:basis-C7},
it remains to show that \eqref{id2608090151}, \eqref{id2608090152}, and \eqref{id2608090153}
are derivable from \eqref{eq2}, \eqref{eq1}, \eqref{eq:Cbridge}, and $\sigma_n$ for all $n \geq 1$.
Indeed, it is easy to see that \eqref{eq1} implies \eqref{id2608090151}.
Next, using \eqref{eq2} and \eqref{eq1}, we have
\[
x_1\overset{\eqref{eq2}}{\preceq}x_2^2
\overset{\eqref{eq1}}{\approx}x_2x_3x_4.
\]
This proves \eqref{id2608090152}.
Finally, \eqref{id2608090153} follows directly from $\sigma_n$ $(n \geq 1)$.
This completes the proof.
\end{proof}

\begin{corollary}\label{prop:define-TR6-ST7}
The variety $\mathsf{V}(TR_6)$ is the subvariety of $\mathsf{V}(ST_7)$
defined by the identity~\eqref{eq3}.
\end{corollary}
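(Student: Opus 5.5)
We compare two equational bases directly. By Proposition~\ref{basisR6}, $\mathsf{V}(TR_6)$ is defined by \eqref{eq2}, \eqref{eq3}, \eqref{eq1} and $\sigma_n$ $(n\geq1)$. By Proposition~\ref{prop:basis-C7}, $\mathsf{V}(ST_7)$ is defined by \eqref{eq2}, \eqref{eq1}, \eqref{eq:Cbridge} and $\sigma_n$ $(n\geq 1)$. Let $\mathcal W$ be the subvariety of $\mathsf{V}(ST_7)$ defined by \eqref{eq3}. It is then defined by \eqref{eq2}, \eqref{eq1}, \eqref{eq:Cbridge}, \eqref{eq3} and all $\sigma_n$. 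So it suffices to check two inclusions.

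First we show $\mathsf{V}(TR_6)\subseteq\mathcal W$. Every defining identity of $\mathcal W$ other than \eqref{eq:Cbridge} already lies in the basis of $\mathsf{V}(TR_6)$. It remains to derive \eqref{eq:Cbridge} from \eqref{eq3}. This is immediate:
\[
x_1\overset{\eqref{eq3}}{\preceq} x_1x_4\preceq x_2+x_2x_3+x_1x_4.
\]
Alternatively, $TR_6$ is a homomorphic image of $ST_7$ (Proposition~\ref{prop:C7-join}), so $TR_6\in\mathsf{V}(ST_7)$. Moreover, $TR_6$ satisfies \eqref{eq3} by Lemma~\ref{R6identities}.

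Next we show $\mathcal W\subseteq\mathsf{V}(TR_6)$. The basis of $\mathcal W$ contains every identity of the basis in Proposition~\ref{basisR6}, namely \eqref{eq2}, \eqref{eq3}, \eqref{eq1} and all $\sigma_n$. Hence every member of $\mathcal W$ satisfies the full basis of $\mathsf{V}(TR_6)$, and so lies in $\mathsf{V}(TR_6)$.

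Combining the two inclusions gives $\mathcal W=\mathsf{V}(TR_6)$. No real obstacle arises; the only point needing care is the one-line derivation of \eqref{eq:Cbridge} from \eqref{eq3}, shown above.
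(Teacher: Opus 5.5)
Your proposal is correct and follows the same approach as the paper, which simply says the result follows from Propositions~\ref{basisR6} and~\ref{prop:basis-C7}. Your one-line derivation of \eqref{eq:Cbridge} from \eqref{eq3} supplies the only detail the paper leaves implicit.
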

\begin{proof}
This follows immediately from Propositions \ref{basisR6} and \ref{prop:basis-C7}.
\end{proof}

\begin{corollary}\label{prop:define-T2-ST7}
The variety $\mathsf{V}(T_2)$ is the subvariety of $\mathsf{V}(ST_7)$
defined by identity~\eqref{eq7}.
\end{corollary}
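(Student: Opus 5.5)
The plan is to reduce to Proposition~\ref{Sa-subvariety}, which says that within $\mathsf{V}(TR_6)$ the variety $\mathsf{V}(T_2)$ is cut out by \eqref{eq7}. Corollary~\ref{prop:define-TR6-ST7} says that $\mathsf{V}(TR_6)$ is the subvariety of $\mathsf{V}(ST_7)$ defined by \eqref{eq3}. So it suffices to show two things. First, $T_2$ lies in $\mathsf{V}(ST_7)$. Second, \eqref{eq3} can be derived from the basis of Proposition~\ref{prop:basis-C7} together with \eqref{eq7}. Once both hold, the subvariety of $\mathsf{V}(ST_7)$ defined by \eqref{eq7} equals the subvariety of $\mathsf{V}(TR_6)$ defined by \eqref{eq7}, and that is $\mathsf{V}(T_2)$.

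The first point is immediate. $T_2$ is a subalgebra of $TR_6$, namely $\{1,3\}$, and $TR_6$ lies in $\mathsf{V}(ST_7)$ by Proposition~\ref{prop:C7-join}.

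For the second point I will derive $x\preceq xy$ directly. By \eqref{eq7} we have $xy\approx x^2$, and \eqref{eq2} with $y:=x$ gives $x\preceq x^2$. Hence
\[
x\overset{\eqref{eq2}}{\preceq}x^2\overset{\eqref{eq7}}{\approx}xy,
\]
which is \eqref{eq3}.

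Alternatively, I could verify that $T_2$ satisfies \eqref{eq7} and derive the basis \eqref{eq50}, \eqref{eq51} of $\mathsf{V}(T_2)$. This uses exactly the same computation as in Proposition~\ref{Sa-subvariety}, which needs only \eqref{eq2} and \eqref{eq7}. I expect no real obstacle here; the only care needed is checking that \eqref{eq2} is among the defining identities of $\mathsf{V}(ST_7)$, which Proposition~\ref{prop:basis-C7} confirms.
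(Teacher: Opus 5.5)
Your proposal is correct and follows essentially the same route as the paper: combine Proposition~\ref{Sa-subvariety} with Corollary~\ref{prop:define-TR6-ST7}, then derive \eqref{eq3} from \eqref{eq2} and \eqref{eq7}. The only difference is cosmetic: the paper writes $x\preceq y^2\approx yx\approx xy$, while you write $x\preceq x^2\approx xy$.
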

\begin{proof}
By Proposition~\ref{Sa-subvariety}, $\mathsf{V}(T_2)$ is the subvariety of $\mathsf{V}(TR_6)$ defined by \eqref{eq7}.
By Corollary~\ref{prop:define-TR6-ST7}, $\mathsf{V}(TR_6)$ is the subvariety of $\mathsf{V}(ST_7)$
defined by the identity~\eqref{eq3}.
Hence $\mathsf{V}(T_2)$ is the subvariety of $\mathsf{V}(ST_7)$ defined by the identities \eqref{eq3} and \eqref{eq7}.
To prove the required inclusion, it suffices to show that \eqref{eq3} is derivable from \eqref{eq7} and \eqref{eq2}
(the latter holds in $ST_7$).
Indeed, we have
\[
x \overset{\eqref{eq2}}\preceq y^2 \overset{\eqref{eq7}}\approx yx \approx xy.
\]
This derives the inequality \eqref{eq3}.
\end{proof}

To apply Corollaries~\ref{prop:define-SR6-ST7}--\ref{prop:define-T2-ST7} to determine the intersection of $\mathsf{V}(SR_6)$ and $\mathsf{V}(TR_6)$,
we recall the following result of Lyu et al.~\cite[Proposition 5.3, Theorem 5.4]{LyuRenYue2026},
which describes the subvarieties of $\mathsf{V}(SR_6)$.

\begin{lemma}\label{lem26090180}
The variety $\mathsf{V}(SR_6)$ is a limit variety and
has exactly four subvarieties, ordered by inclusion: $\mathsf{V}(SR_6)$,
$\mathsf{V}(S_c(ab))$, $\mathsf{V}(T_2)$, and $\mathbf{T}$,
where the Cayley tables of $S_c(ab)$ are given in Table~$\ref{tab:S48}$.
\end{lemma}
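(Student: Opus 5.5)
This lemma is quoted from Lyu et al.~\cite[Proposition~5.3, Theorem~5.4]{LyuRenYue2026}. To justify it independently I would follow the template of Section~4 for $TR_6$. Nonfinite basedness of $\mathsf{V}(SR_6)$ comes from its basis in Corollary~\ref{prop:define-SR6-ST7}, namely \eqref{id2608090150}--\eqref{id2608090153}. I would use the same compactness argument as in Proposition~\ref{nfb} and Corollary~\ref{cor:C7-nfb}. The key fact is that $\mathbf{u}^{(n)}$ is $x^2$-free, $x_2x_3x_4$-free, $(x+xy)$-free and $\mathbf{u}^{(k)}$-free for $k<n$. Hence no identity of a finite subsystem applies to $\mathbf{u}^{(n)}$, and so $x_1\cdots x_{2n+1}\preceq\mathbf{u}^{(n)}$ cannot be derived by Lemma~\ref{eqyield}.

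For the lattice, I would first note that $\mathsf{V}(T_2)$ is the unique atom. As in Proposition~\ref{minimal-subvariety}, the identities $x_1\preceq x_2x_3x_4$ and $x^3\approx x^2$ exclude $L_2,R_2,M_2,D_2,N_2$, and $T_2$ embeds as $\{1,3\}$. Next, given a subvariety $\mathcal V\supsetneq\mathsf{V}(T_2)$, I would pick $A\in\mathcal V$ and $a,b\in A$ violating \eqref{eq7} (which defines $\mathsf{V}(T_2)$ inside $\mathsf{V}(ST_7)$ by Corollary~\ref{prop:define-T2-ST7}). From the order relations forced by \eqref{id2608090150}, I would show that a quotient of $\langle a,b\rangle$ is isomorphic to $S_c(ab)$. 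This gives $\mathsf{V}(S_c(ab))\subseteq\mathcal V$.

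Then I would identify a single inequality $\delta$ that defines $\mathsf{V}(S_c(ab))$ within $\mathsf{V}(SR_6)$. The natural candidate is a four-variable path inequality analogous to \eqref{eq31}, such as $x_1x_2x_3x_4\preceq x_1x_2+x_2x_3+x_3x_4$ or a variant suited to the $SR_6$ order. I would use a known finite basis for $S_c(ab)$ to confirm this choice. If $\mathcal V$ violates $\delta$ at $a,b,c,d\in B$, I would partition $\langle a,b,c,d\rangle$ into six congruence classes mirroring the sets $R_1,\dots,R_6$ from Proposition~\ref{subvariety-lattice}, so that the quotient is $SR_6$.

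Finite basedness of the proper subvarieties then follows from the known finite bases of $T_2$ and $S_c(ab)$. The main obstacle is this last construction: choosing $\delta$ correctly and verifying by hand that the six-block partition is a congruence whose quotient is $SR_6$. I would first try transporting the blocks $R_i$ of Section~4 unchanged and then adjust them where the different additive structure of $SR_6$, in which $2+5=2$ and $4+6=4$, breaks compatibility.
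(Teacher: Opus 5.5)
Your opening sentence matches the paper, which gives no proof of this lemma and simply cites Lyu et al. Two parts of your independent sketch are sound:
\begin{itemize}
\item The nonfinite-basedness argument via freeness of $\mathbf{u}^{(n)}$ is the argument of Corollary~\ref{cor:C7-nfb}.
\item The step producing $S_c(ab)$ works. Since $a+ab\approx a^2$ in $\mathsf{V}(SR_6)$, a failure of \eqref{eq7} is a failure of \eqref{eq3}, so Proposition~\ref{prop:exclude-S48-ST7} applies.
\end{itemize}
However, the decisive step is left open, and the concrete choices you name would fail.

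\textbf{The inequality $\delta$.} In $\mathsf{V}(SR_6)$ every product of three or more factors is the top element $x^2$. So $x_1x_2x_3x_4\preceq x_1x_2+x_2x_3+x_3x_4$ just says that this path sum is always the top. Substituting $x_3\mapsto x_1$, $x_4\mapsto x_2$ gives $x_1x_2\approx x_1^2$. Hence your candidate defines $\mathsf{V}(T_2)$, not $\mathsf{V}(S_c(ab))$; indeed it fails in $S_c(ab)$ at $(a,b,a,b)$.

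The correct $\delta$ is \eqref{eq31} itself, not an analogue of it. This is Lyu et al.'s Corollary~4.6, which the paper invokes in the proof of Corollary~\ref{coro26090138}. You can confirm it from Lemma~\ref{lemma26083101} along the lines of Proposition~\ref{prop:basis-S5}:
\begin{itemize}
\item case (ii) becomes trivial, because $\mathbf{u}\succeq x+xy\approx x^2$;
\item case (iv) is handled by the inequalities $\delta_n$.
\end{itemize}

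\textbf{The congruence.} Your diagnosis of why the blocks $R_i$ break is off: $2+5=2$ and $4+6=4$ hold in $TR_6$ as well. The real difference is this. In $TR_6$ the element $3$ lies above $2,4,5,6$. In $SR_6$, every sum of two distinct elements other than $2+5$ and $4+6$ equals $1$.

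Let $\varphi$ be the homomorphism on terms in $a,b,c,d$ given by $a\mapsto 2$, $b\mapsto 6$, $c\mapsto 5$, $d\mapsto 4$. Then:
\begin{itemize}
\item keep $R_2=\{a,a+c\}$, $R_4=\{d,b+d\}$, $R_5=\{c\}$, $R_6=\{b\}$;
\item shrink $R_3$ to the seven nonempty sums of $ab,bc,cd$;
\item put everything else (for instance $a+b$ and $ab+c$) into $R_1$.
\end{itemize}

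Well-definedness then reduces to one check. Take each of these $13$ terms $\mathbf{t}$ with $\varphi(\mathbf{t})\neq 1$, and each word $\mathbf{w}$ with $\varphi(\mathbf{w})\nleq\varphi(\mathbf{t})$ in $SR_6$. You must show that $\mathbf{w}\le\mathbf{t}$ in $B$ forces $ad\le ab+bc+cd$.

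Every instance follows from three facts: $x+xy$ is the top; triangle sums are the top, by \eqref{id2608090153} with $n=1$; and multiplication by a variable is monotone. For example, $d\le a+c$ gives $bd\le ab+bc$, so $ab+bc+cd\ge bc+cd+bd$, which is the top. The automorphism of $SR_6$ swapping $2\leftrightarrow 4$ and $5\leftrightarrow 6$, matching $a\leftrightarrow d$ and $b\leftrightarrow c$, halves the work.

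Until you supply the correct $\delta$ and this partition with its verification, the sketch does not establish the four-element chain.
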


\begin{table}[htbp]
\centering
\small
\caption{The Cayley tables of $S_c(ab)$}\label{tab:S48}
\renewcommand{\arraystretch}{1.08}
\setlength{\tabcolsep}{4.8pt}
\begin{tabular}{c|cccc}
$+$&0&$a$&$b$&$ab$\\ \hline
0&0&0&0&0\\
$a$&0&$a$&0&0\\
$b$&0&0&$b$&0\\
$ab$&0&0&0&$ab$
\end{tabular}
\qquad\qquad
\begin{tabular}{c|cccc}
$\cdot$&0&$a$&$b$&$ab$\\ \hline
0&0&0&0&0\\
$a$&0&$0$&$ab$&0\\
$b$&0&$ab$&$0$&0\\
$ab$&0&0&0&$0$
\end{tabular}
\end{table}

\begin{corollary}\label{coro26090215}
The intersection of $\mathsf{V}(SR_6)$ and $\mathsf{V}(TR_6)$ is $\mathsf{V}(T_2)$.
\end{corollary}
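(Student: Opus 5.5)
The plan is to work entirely inside $\mathsf{V}(ST_7)$, where both varieties are defined relative to the basis of Proposition~\ref{prop:basis-C7}. By Corollaries~\ref{prop:define-SR6-ST7} and~\ref{prop:define-TR6-ST7}, the intersection $\mathsf{V}(SR_6)\cap\mathsf{V}(TR_6)$ is the subvariety of $\mathsf{V}(ST_7)$ defined by the two identities $x^2\approx x+xy$ and $x\preceq xy$ (that is,~\eqref{id2608090150} and~\eqref{eq3}). By Corollary~\ref{prop:define-T2-ST7}, $\mathsf{V}(T_2)$ is the subvariety of $\mathsf{V}(ST_7)$ defined by~\eqref{eq7}. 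So the proof reduces to two inclusions between these relatively defined subvarieties.

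\textbf{Inclusion $\mathsf{V}(T_2)\subseteq$ intersection.} This is immediate because $T_2$ lies in both $\mathsf{V}(SR_6)$ and $\mathsf{V}(TR_6)$. It embeds in $TR_6$ as the subalgebra $\{1,3\}$. For $SR_6$, it is $\{1,3\}$ as well, or one can appeal directly to Lemma~\ref{lem26090180}, which lists $\mathsf{V}(T_2)$ among the subvarieties of $\mathsf{V}(SR_6)$.

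\textbf{Reverse inclusion.} It suffices to derive~\eqref{eq7}, i.e.\ $x^2\approx xy$, from~\eqref{id2608090150},~\eqref{eq3} and the basis of $\mathsf{V}(ST_7)$ (in particular~\eqref{eq2}). The inequality $xy\preceq x^2$ holds by~\eqref{eq2}. For the converse, substituting $y$ for $x$ in~\eqref{eq2} and using the symmetric instance gives $x^2\approx y^2$. Using~\eqref{eq3} twice, $x\preceq xy$ and $y\preceq xy$ with commutativity, we obtain
\[
x^2\overset{\eqref{id2608090150}}{\approx}x+xy\overset{\eqref{eq3}}{\approx}xy .
\]
Hence~\eqref{eq7} holds in the intersection, and the intersection is contained in $\mathsf{V}(T_2)$.

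\textbf{Alternative route.} One could instead use the chain structure of Lemma~\ref{lem26090180}: the intersection is a subvariety of $\mathsf{V}(SR_6)$, hence one of $\mathsf{V}(SR_6)$, $\mathsf{V}(S_c(ab))$, $\mathsf{V}(T_2)$, $\mathbf T$. It then suffices to note that $S_c(ab)$ fails~\eqref{eq3}, since $a\not\le ab$ (their sum is $0\neq ab$), so $S_c(ab)\notin\mathsf{V}(TR_6)$. Combined with $T_2$ lying in both varieties, this yields $\mathsf{V}(T_2)$.

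The derivation above is essentially a one-liner, so I expect no real obstacle. The only point needing care is confirming that the relative definitions in the three corollaries combine correctly, i.e.\ that the intersection of two subvarieties of $\mathsf{V}(ST_7)$ is defined by the union of their defining identities. This is standard.
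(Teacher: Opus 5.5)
Your proposal is correct and follows the paper's proof: you work inside $\mathsf{V}(ST_7)$ via Corollaries~\ref{prop:define-SR6-ST7}, \ref{prop:define-TR6-ST7} and~\ref{prop:define-T2-ST7}, and derive~\eqref{eq7} as $x^2\approx x+xy\approx xy$ from~\eqref{id2608090150} and~\eqref{eq3}. (The side remarks about $xy\preceq x^2$ and $x^2\approx y^2$ are not needed; also, $x^2\approx y^2$ comes from substituting $x^2$ for $x$ in~\eqref{eq2}, not $y$.)
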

\begin{proof}
First, by Proposition~\ref{subvariety-lattice} and Lemma~\ref{lem26090180},
$\mathsf{V}(T_2)$ is a subvariety of both $\mathsf{V}(SR_6)$ and $\mathsf{V}(TR_6)$,
and so $\mathsf{V}(T_2)$ is a subvariety of the intersection of $\mathsf{V}(SR_6)$ and $\mathsf{V}(TR_6)$.
Conversely, by Corollaries \ref{prop:define-SR6-ST7} and \ref{prop:define-TR6-ST7},
the intersection of $\mathsf{V}(SR_6)$ and $\mathsf{V}(TR_6)$ is
the subvariety of $\mathsf{V}(ST_7)$ defined by \eqref{id2608090150} and \eqref{eq3}.
From these two identities, one obtains \eqref{eq7}.
By Corollary~\ref{prop:define-T2-ST7}, it follows that
the intersection is a subvariety of $\mathsf{V}(T_2)$.
Thus the equality holds.
\end{proof}

In the remainder of this section, we introduce two important members of $\mathsf{V}(ST_7)$.
The first is $S_{(5, 2269)}$, whose Cayley tables are given in Table~\ref{tab:S52269}.
The following remark shows that $S_{(5,2269)}$ is a member of $\mathsf{V}(ST_7)$.

\begin{remark}\label{remark26083101}
$\mathsf{V}(S_{(5,2269)})=\mathsf{V}(S_c(ab),S_{(4,369)})$,
since $S_{(5,2269)}$ is isomorphic to a subdirect product of $S_c(ab)$ and $S_{(4,369)}$
via the congruences defined by the nontrivial blocks $\{1,5\}$ and $\{3,5\}$, respectively.
\end{remark}

\begin{table}[htbp]
\centering
\small
\caption{The Cayley tables of $S_{(5,2269)}$}\label{tab:S52269}
\renewcommand{\arraystretch}{1.08}
\setlength{\tabcolsep}{4.8pt}
\begin{tabular}{c|ccccc}
$+$&1&2&3&4&5\\ \hline
1&1&1&1&1&1\\
2&1&2&5&5&5\\
3&1&5&3&5&5\\
4&1&5&5&4&5\\
5&1&5&5&5&5
\end{tabular}
\qquad\qquad
\begin{tabular}{c|ccccc}
$\cdot$&1&2&3&4&5\\ \hline
1&1&1&1&1&1\\
2&1&1&1&3&1\\
3&1&1&1&1&1\\
4&1&3&1&1&1\\
5&1&1&1&1&1
\end{tabular}
\end{table}

For a term $\bt$, define the odd path closure $\mathbb{G}_{\bt}^{odd}$ by
\[
\mathbb{G}_{\bt}^{odd}
 =\{xy\mid \text{there is an odd path in $\mathbb{G}_{\bt}$ between $x$ and $y$}\}.
\]
The following result, due to Lyu et al.~\cite[Lemma~4.2]{LyuRenYue2026},
characterizes the inequalities of $S_c(ab)$.

\begin{lemma}\label{lemma26083101}
Let $\mathbf q \preceq \mathbf u$ be a nontrivial inequality, where
$\mathbf u=\mathbf u_1+\cdots+\mathbf u_n$ with $\mathbf u_i,\mathbf q\in X_c^+$ for $1\leq i \leq n$.
Then $\mathbf q\preceq\mathbf u$ is satisfied by $S_c(ab)$ if and only if one of the following conditions holds:
\begin{enumerate}[$(\rm i)$]
\item $\ell(\mathbf u_i)\geq3$ for some $\mathbf u_i\in\mathbf u$;
\item $c(L_1(\mathbf u))\cap c(L_2(\mathbf u))\neq\varnothing$;
\item the graph $\mathbb{G}_{\bu}$ contains an odd cycle;
\item $\mathbf q\in \mathbb{G}_\bu^{odd}$.
\end{enumerate}
\end{lemma}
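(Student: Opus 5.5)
The plan is to prove both directions by working directly with the structure of $S_c(ab)$. Read off from Table~\ref{tab:S48} that $0$ is the additive maximum and the multiplicative zero, and that the sum of any two distinct elements is $0$. Every square and every product of three elements is $0$, and the only product of two elements different from $0$ is $a\cdot b=b\cdot a=ab$. Consequently, for a homomorphism $\varphi\colon P_f(X_c^+)\to S_c(ab)$ and a term $\mathbf u$, we have $\varphi(\mathbf u)\neq0$ only if all words of $\mathbf u$ take one common value $c\neq0$. This single observation drives both directions.

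\emph{Sufficiency.} It is enough to show that $\varphi(\mathbf u)=0$ under (i)--(iii), and that $\varphi(\mathbf q)\le\varphi(\mathbf u)$ under (iv).
\begin{enumerate}[(1)]
\item Under (i), some word of length at least three is sent to $0$.
\item Under (ii), take $x\in c(L_1(\mathbf u))$ with $xy\in L_2(\mathbf u)$. The values $\varphi(x)$ and $\varphi(x)\varphi(y)$ are distinct unless both are $0$: if $\varphi(x)\in\{a,b\}$ then the product lies in $\{0,ab\}$, and if $\varphi(x)=ab$ the product is $0$. So their sum is $0$.
\item Under (iii), argue exactly as in Lemma~\ref{R6identities}. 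The ``product $=ab$'' graph on $\{a,b,ab\}$ is the single edge $\{a,b\}$, which is bipartite, so some edge of the odd cycle is sent to $0$. A loop $x^2$ counts as an odd cycle and is sent to $0$ directly.
\item Under (iv), suppose $\varphi(\mathbf u)\neq0$ and let $x=z_0,z_1,\dots,z_{2k+1}=y$ be an odd path in $\mathbb G_{\mathbf u}$. Every edge word is then sent to the common value $ab$, so $\{\varphi(z_i),\varphi(z_{i+1})\}=\{a,b\}$ for each $i$. The values therefore alternate along the path, and since the path has odd length, $\{\varphi(x),\varphi(y)\}=\{a,b\}$. Hence $\varphi(\mathbf q)=ab=\varphi(\mathbf u)$.
\end{enumerate}

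\emph{Necessity.} Assume (i)--(iv) all fail; we build a separating homomorphism. First, $\mathbf u=L_1(\mathbf u)+L_2(\mathbf u)$, since (i) fails. Next, $\mathbb G_{\mathbf u}$ has no odd cycle, and in particular no loops, so it is bipartite. Finally, $c(L_1(\mathbf u))\cap c(L_2(\mathbf u))=\varnothing$, since (ii) fails.
\begin{enumerate}[(1)]
\item If $L_2(\mathbf u)=\varnothing$, send $c(\mathbf u)$ to $a$ and every other variable to $b$. Then $\varphi(\mathbf u)=a$. Nontriviality gives $\mathbf q\notin\mathbf u$. If $\mathbf q$ is a variable, it lies outside $c(\mathbf u)$ and goes to $b$. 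If $\mathbf q$ has length at least two, it goes into $\{0,ab\}$. In neither case is $\varphi(\mathbf q)\le a$.
\item If $L_2(\mathbf u)\neq\varnothing$, fix a proper $2$-colouring of each connected component of $\mathbb G_{\mathbf u}$ by $a,b$. Send the variables of $L_1(\mathbf u)$ to $ab$ and all remaining variables to $0$. Then every word of $\mathbf u$ goes to $ab$, so $\varphi(\mathbf u)=ab$. We must arrange $\varphi(\mathbf q)\neq ab$. Words of length at least three, words involving variables sent to $0$, and products involving an $L_1$-variable all go to $0$. A single vertex goes to $a$ or $b$. A single $L_1$-variable would lie in $\mathbf u$, contradicting nontriviality.
\item The remaining case, which is the only delicate step, is $\mathbf q=xy$ with $x,y$ vertices of $\mathbb G_{\mathbf u}$, where the colouring must be chosen so that $x$ and $y$ receive the same colour. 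Here we use that $\mathbf q\notin\mathbb G_{\mathbf u}^{odd}$. If $x$ and $y$ lie in different components, swap the colouring on one component. If they lie in the same component, every path between them is even, because an odd one would put $xy\in\mathbb G_{\mathbf u}^{odd}$, so any proper $2$-colouring gives them equal colours. This includes the case $x=y$, where $\mathbf q=x^2$ is sent to $0$ anyway.
\end{enumerate}
In every case $\varphi(\mathbf q)\nleq\varphi(\mathbf u)$, which completes the plan.
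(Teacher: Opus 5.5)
Your proof is correct: both directions check out against the tables of $S_c(ab)$, including the parity argument in (iii) and (iv) and the choice of colouring in the delicate case $\mathbf q=xy$.

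The paper does not prove this lemma; it quotes it from \cite[Lemma~4.2]{LyuRenYue2026}. The natural comparison is therefore with the paper's own proofs of analogous characterizations, Propositions~\ref{wordprop}, \ref{prop:basis-S5} and~\ref{prop:basis-N7}.

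Those proofs follow the same overall template as yours. Sufficiency comes from showing that $\mathbf u$ evaluates to the absorbing top element, or to something dominating $\varphi(\mathbf q)$. Necessity comes from a separating homomorphism built from a bipartition of $\mathbb G_{\mathbf u}$. They use two maps for the necessity step:
\begin{enumerate}[(1)]
\item a first map, from a fixed bipartition, that forces $\mathbf q$ to be a linear word of length at most two with endpoints in opposite parts;
\item a second map $\psi$, obtained by swapping colours on the component $N(x)$, that detects whether $x$ and $y$ are joined by an odd path.
\end{enumerate}

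You instead choose the $2$-colouring after seeing $\mathbf q$, swapping one component when $x$ and $y$ lie in different components. This gives a single homomorphism with $\varphi(\mathbf u)=ab$ that separates in every case. It is shorter, and it makes clear that (iv) is exactly the obstruction to giving $x$ and $y$ the same colour.

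Your opening observation also streamlines sufficiency: $\varphi(\mathbf u)\neq0$ forces all words of $\mathbf u$ to share one nonzero value, and hence every edge of a path to carry the pair $\{a,b\}$.

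One minor point: your separate case $L_2(\mathbf u)=\varnothing$ is not needed. The map of your case (2), sending $c(L_1(\mathbf u))$ to $ab$ and every other variable to $0$, already separates there. A variable $\mathbf q\notin\mathbf u$ goes to $0$, and every word of length at least two goes to $0$, while $\varphi(\mathbf u)=ab$.
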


From the inequality~\eqref{eq31}, one derives the inequalities
\begin{equation*}
 x_1x_{2n+2}\preceq x_1x_2+x_2x_3+\cdots+x_{2n+1}x_{2n+2}
 \qquad(n\geq1),
\end{equation*}
which we denote by $\delta_n$ $(n\geq1)$.
We now give a finite equational basis for $S_{(5, 2269)}$.
\begin{proposition}\label{prop:basis-S5}
The commutative ai-semiring variety $\mathsf{V}(S_{(5,2269)})$ is defined by
the identities \eqref{eq2}, \eqref{eq1}, \eqref{eq31}, and \eqref{eq:Cbridge}.
\end{proposition}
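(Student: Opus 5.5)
Soundness first. By Remark~\ref{remark26083101}, $\mathsf{V}(S_{(5,2269)})=\mathsf{V}(S_c(ab),S_{(4,369)})$, so it suffices to check the four identities in $S_c(ab)$ and in $S_{(4,369)}$.
For $S_{(4,369)}$, note that it lies in $\mathsf{V}(TR_6)$. Hence it satisfies \eqref{eq2} and \eqref{eq1}, and by Proposition~\ref{Sstar-subvariety} it satisfies \eqref{eq31}. Inequality \eqref{eq:Cbridge} then follows from \eqref{eq3}, which gives $x_1\preceq x_1x_4$.
For $S_c(ab)$, identities \eqref{eq2} and \eqref{eq1} are direct table checks, or they follow from Lemma~\ref{lemma26083101}(i). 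Inequality \eqref{eq:Cbridge} holds by Lemma~\ref{lemma26083101}(ii), since $x_2\in c(L_1)\cap c(L_2)$. Inequality \eqref{eq31} holds by Lemma~\ref{lemma26083101}(iv), since $x_1x_4$ lies in the odd path closure of the path $x_1x_2x_3x_4$.

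Completeness. Let $\Sigma=\{\eqref{eq2},\eqref{eq1},\eqref{eq31},\eqref{eq:Cbridge}\}$, and let $\bq\preceq\bu$ be a nontrivial inequality satisfied by $S_{(5,2269)}$. Then it holds in both $S_c(ab)$ and $S_{(4,369)}$. I apply Lemma~\ref{lemma26083101} to $S_c(ab)$ and proceed by cases.

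\textbf{Case (i):} some word of $\bu$ has length at least $3$. Then $\bu\succeq x^2\succeq\bq$ by \eqref{eq1} and \eqref{eq2}.

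\textbf{Case (ii):} $c(L_1(\bu))\cap c(L_2(\bu))\neq\emptyset$. Then $\bu$ has an additive subterm $x+xy$. If $\ell(\bq)=1$ with $c(\bq)\subseteq c(L_2(\bu))$, the argument at the end of Proposition~\ref{prop:basis-C7} applies through \eqref{eq:Cbridge}.

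\textbf{Case (iii):} $\mathbb G_\bu$ contains an odd cycle. I first derive $\sigma_n$ from $\Sigma$. The idea is to apply $\delta_{n-1}$, which follows from \eqref{eq31}, along the odd cycle to obtain $x_1^2\preceq\bu^{(n)}$; then \eqref{eq2} gives $x\preceq\bu^{(n)}$. More concretely, $\delta$ applied to the path $x_1x_2\cdots x_{2n+1}$ followed by the edge back to $x_1$ produces a loop word $x_1x_1$. A cleaner route is to take the path of even length from $x_1$ back to $x_1$ in $\delta_n$ by substituting $x_{2n+2}\mapsto x_1$.

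\textbf{Case (iv):} $\bq\in\mathbb G^{odd}_\bu$. Then $\bq=xy$ is joined by an odd path in $\mathbb G_\bu$, and $\delta_k$ gives $\bu\succeq\bq$.

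\textbf{Remaining subcase.} What is left is $\ell(\bq)=1$ in situations where the lemma for $S_c(ab)$ falls into cases (iii) or (iv) with different $\bq$, or where $\bq$ has length $2$ but is not covered. For $\ell(\bq)=1$ and $\bu$ consisting of linear words of length at most $2$ with bipartite $\mathbb G_\bu$, I use $S_{(4,369)}$. From Lemma~\ref{lemma26082720}, together with an analogue of Proposition~\ref{wordprop} that I would prove for $S_{(4,369)}$ using the assignments $5,6\mapsto 3,4$, we get $c(\bq)\subseteq c(\bu)$. If $\bq\in L_1(\bu)$ the inequality is trivial. Otherwise $\bq=z$ with $zt\in\bu$, and $S_c(ab)$ forces $c(L_1)\cap c(L_2)\neq\emptyset$, which is exactly case (ii) handled by \eqref{eq:Cbridge}.

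\textbf{Main obstacle.} The hardest step is the length-two case: showing that whenever $\bq=xy$ satisfies the inequality in both factors, $xy$ is indeed an odd-path pair, or else $\bu$ is already forced up to the top element. Cases (i)--(iii) give the top element only after $\sigma_n$ has been derived, which I do via $\delta_n$ with identified endpoints.
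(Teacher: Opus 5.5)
Your soundness half is fine. Checking the four identities on the two factors from Remark~\ref{remark26083101} is a legitimate alternative to the paper's direct check. One small slip: in $S_c(ab)$, the inequalities $x\preceq y^2$ and $x_1x_2x_3\preceq x^2$ come from the loop case (iii) of Lemma~\ref{lemma26083101}, not from (i).

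The genuine gap is in completeness, exactly where you flag the ``main obstacle'' but never resolve it. This is the situation $c(L_1(\bu))\cap c(L_2(\bu))\neq\emptyset$ with (i), (iii), (iv) failing and $\ell(\bq)\geq 2$. Your Case (ii) and your remaining subcase treat only $\ell(\bq)=1$. For $\bq=xy$ you only restate what has to be shown, and you do not treat $\bq$ of length at least $3$ or of the form $x^2$ at all. This is the substantive part of the proof, and Lemma~\ref{lemma26083101} cannot supply it. Condition (ii) alone already makes $S_c(ab)$ satisfy $\bq\preceq\bu$ for every $\bq$. So every constraint on $\bq$ must come from $S_{(4,369)}$ or from $S_{(5,2269)}$ itself.

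The missing argument, which is the paper's, is a two-step colouring.
\begin{enumerate}
\item Fix a bipartition $(A,B)$ of $\mathbb{G}_{\bu}$. Define $\varphi$ into $S_{(5,2269)}$ by $A\mapsto 2$, $B\mapsto 4$, $c(L_1(\bu))\setminus c(L_2(\bu))\mapsto 3$, and every other variable $\mapsto 1$. Then $L_2(\bu)\mapsto 3$, and some variable of $L_1(\bu)$ goes to $2$ or $4$, so $\varphi(\bu)=5$ and hence $\varphi(\bq)\neq 1$.
\item This forces $c(\bq)\subseteq c(\bu)$ and $\bq$ linear of length at most $2$. It also replaces your deferred ``analogue of Proposition~\ref{wordprop}''. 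If $\bq=xy$, it forces $\{\varphi(x),\varphi(y)\}=\{2,4\}$; say $x\in A$ and $y\in B$.
\item Now swap the two colours only on the connected component $N(x)$ of $x$. The term $\bu$ still evaluates to $5$ and $x$ now goes to $4$, so $\bq\preceq\bu$ forces $y$ to receive $2$, that is, $y\in N(x)$.
\item Hence $x$ and $y$ are joined by a path, necessarily of odd length since they lie on opposite sides. So $\bq\in\mathbb{G}_{\bu}^{odd}$, contradicting the failure of (iv).
\end{enumerate}
The same argument works in the copy $\{1,3,5,6\}\subseteq TR_6$ of $S_{(4,369)}$, using colours $5$ and $6$.

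Separately, tidy up your Case (iii). The right move is the substitution $x_{2n+2}\mapsto x_1$ in $\delta_n$. Its path has odd length $2n+1$, not even, and it yields exactly $x_1^2\preceq\bu^{(n)}$. Your $\delta_{n-1}$ idea does not give this in one step. Loops $x^2\in\bu$ follow directly from \eqref{eq2}.
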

\begin{proof}
It is routine to check that $S_{(5,2269)}$ satisfies the identities~\eqref{eq2},
\eqref{eq1}, \eqref{eq31}, and \eqref{eq:Cbridge}.
It remains to show that every inequality satisfied by $S_{(5,2269)}$ is
derivable from \eqref{eq2},
\eqref{eq1}, \eqref{eq31}, and \eqref{eq:Cbridge}.
Let $\mathbf q\preceq\mathbf u$ be such a nontrivial inequality, where
$\mathbf u=\mathbf u_1+\cdots+\mathbf u_n$ with $\mathbf u_i, \bq\in X_c^+$ for $1 \leq i \leq n$.
By Remark~\ref{remark26083101}, $S_c(ab)$ satisfies $\mathbf q\preceq\mathbf u$.
By Lemma~\ref{lemma26083101}, one of the following cases occurs:
\begin{enumerate}[(\rm i)]
\item $\ell(\mathbf u_i)\geq3$ for some $\mathbf u_i\in\mathbf u$;
\item $c(L_1(\mathbf u))\cap c(L_2(\mathbf u))\neq\varnothing$;
\item the graph $\mathbb{G}_{\bu}$ contains an odd cycle;
\item $\bq\in \mathbb{G}_\bu^{odd}$.
\end{enumerate}

If Case~{\rm (i)} occurs, then
\[
\mathbf u\succeq\mathbf u_i
\overset{\eqref{eq1}}{\approx}x^2
\overset{\eqref{eq2}}{\succeq}\mathbf q.
\]

Suppose that Case~{\rm (iii)} occurs. Then $\mathbb{G}_{\bu}$ contains an odd cycle.
If this cycle is a loop, then $\bu$ contains $x^2$ for some $x \in X$, and so
\[
\mathbf u\succeq x^2 \overset{\eqref{eq2}}{\succeq}\mathbf q.
\]
If this cycle is of length $2k+1$ for some $k \geq 1$,
then $\bu$ has an additive subterm of the form
$x_1x_2+x_2x_3+\cdots+x_{2k}x_{2k+1}+x_{2k+1}x_{1}$, and so
\[
\mathbf u\succeq x_1x_2+x_2x_3+\cdots+x_{2k}x_{2k+1}+x_{2k+1}x_{1} \overset{\delta_k}{\succeq} x_1^2 \overset{\eqref{eq2}}{\succeq}\mathbf q.
\]

If Case~{\rm (iv)} occurs, then $\mathbf q\in \mathbb{G}_\bu^{odd}$.
We may write $\bq=xy$. Then there is a path of length $2k+1$ in $\mathbb{G}_\bu$ between $x$ and $y$ for some $k \geq 1$,
and so $\mathbf u\succeq\mathbf q$ is derivable from $\delta_k$.

In the remainder, we consider Case~{\rm (ii)}, and assume that
none of Cases~{\rm (i)}, {\rm (iii)}, or~{\rm (iv)} occurs.
Then $\bq \notin \mathbb{G}_\bu^{odd}$,
$\mathbf u=L_1(\mathbf u)+L_2(\mathbf u)$, $c(L_1(\mathbf u))\cap c(L_2(\mathbf u))\neq \emptyset$,
every word in $L_2(\mathbf u)$ is linear, and $\mathbb{G}_{\bu}$ is bipartite.
Choose a bipartition $(A, B)$ of $\mathbb{G}_{\bu}$ and define a semiring
homomorphism $\varphi \colon P_f(X_c^+) \to S_{(5, 2269)}$ by setting, for every
$t\in X$,
\[
\varphi(t)=
\begin{cases}
2,&t\in A,\\
4,&t\in B,\\
3,&t\in c(L_1(\mathbf u))\setminus c(L_2(\mathbf u)),\\
1,&\text{otherwise}.
\end{cases}
\]
Then $\varphi(L_2(\mathbf u))= 2\cdot 4=3$, and $\varphi(t)=2$ or $4$ for every $t \in c(L_1(\mathbf u))\cap c(L_2(\mathbf u))$.
Since $2+3=4+3=5$ in $S_{(5, 2269)}$,
it follows that $\varphi(L_1(\mathbf u))+\varphi(L_2(\mathbf u))=\varphi(L_1(\mathbf u))+3=5$,
and so
\[
\varphi(\mathbf u)=\varphi(L_1(\mathbf u)+L_2(\mathbf u))
=\varphi(L_1(\mathbf u))+\varphi(L_2(\mathbf u))=5,
\]
and so $\varphi(\mathbf q) \leq 5$.
Note that $1$ is the multiplicative zero and the additive maximum element of $S_{(5, 2269)}$.
Thus $\varphi(\mathbf q) \neq 1$, $c(\mathbf q)\subseteq c(\mathbf u)$, and $\mathbf q$ is a linear word of length at most two.

Suppose first that $\bq=xy$ for some distinct $x, y \in X$.
Then
\[
\varphi(\bq)=\varphi(xy)=\varphi(x)\varphi(y)\in \{1, 3\}.
\]
Since $\varphi(\mathbf q) \neq 1$, it follows that $\varphi(x)\varphi(y)=3$.
Since $3$ can only be the product of $2$ and $4$, we have $\{\varphi(x), \varphi(y)\}=\{2, 4\}$.
By the definition of $\varphi$, either $x\in A, y\in B$ or $x\in B, y\in A$.
Consequently, any path in $\mathbb{G}_{\mathbf u}$ connecting $x$ and $y$ must have odd length.

Without loss of generality, let $x\in A$ and $y\in B$. Define
\[
N(x)=\bigl\{z\in c(L_2(\mathbf u)) \mid z=x
\text{ or there is a path between $z$ and $x$}\bigr\},
\]
and define
$\psi\colon P_f(X_c^+)\to S_{(5,2269)}$ by
\[
\psi(t)=
\begin{cases}
2,&t\in(A\setminus N(x))\cup(B\cap N(x)),\\
4,&t\in(B\setminus N(x))\cup(A\cap N(x)),\\
3,&t\in c(L_1(\mathbf u))\setminus c(L_2(\mathbf u)),\\
1,&\text{otherwise}.
\end{cases}
\]
Then $\psi(x)=4$, $\psi(L_2(\bu))=3$, and $\psi(t)=2$ or $4$ for every $t \in c(L_1(\mathbf u))\cap c(L_2(\mathbf u))$.
Since $2+3=4+3=5$ in $S_{(5, 2269)}$,
it follows that
\[
\psi(L_1(\mathbf u))+\psi(L_2(\mathbf u))=\psi(L_1(\mathbf u))+3=5,
\]
and so
\[
\psi(\mathbf u)=\psi(L_1(\mathbf u)+L_2(\mathbf u))
=\psi(L_1(\mathbf u))+\psi(L_2(\mathbf u))=5.
\]
This implies that
\[
4\cdot \psi(y)=\psi(x)\cdot \psi(y)=\psi(xy)=\psi(\mathbf q)\leq \psi(\mathbf u)=5,
\]
and so $\psi(y)=2$. By the definition of $\psi$,
we obtain that $y\in B\cap N(x)$.
Thus $x$ and $y$ are joined by an odd path, and so $\bq\in \mathbb{G}_\bu^{odd}$,
contradicting the exclusion of Case (iv). Therefore,
$\ell(\mathbf q)=1$.

We can write $\mathbf q=z$ for some $z\in X$.
Then $z\in c(L_2(\mathbf u))$, and so $zt\in L_2(\mathbf u)$ for some $t\in X$.
Choose $x\in c(L_1(\mathbf u))\cap c(L_2(\mathbf u))$. Then $xy\in L_2(\mathbf u)$ for some $y\in X$.
Now we have
\[
\mathbf u\succeq x+xy+zt
\overset{\eqref{eq:Cbridge}}{\succeq}z=\mathbf q.
\]
This completes the proof.
\end{proof}

\begin{corollary}\label{prop:define-S5-ST7}
The variety $\mathsf{V}(S_{(5,2269)})$ is the subvariety of
$\mathsf{V}(ST_7)$ defined by the inequality~\eqref{eq31}.
\end{corollary}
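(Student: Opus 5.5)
We must show that $\mathsf{V}(S_{(5,2269)})$ equals the subvariety of $\mathsf{V}(ST_7)$ defined by \eqref{eq31}. The plan is to compare equational bases. By Proposition~\ref{prop:basis-C7}, $\mathsf{V}(ST_7)$ is defined by $\sigma_n$ $(n\geq1)$, \eqref{eq2}, \eqref{eq1}, and \eqref{eq:Cbridge}. Hence the subvariety of $\mathsf{V}(ST_7)$ defined by \eqref{eq31} has basis
\[
\{\sigma_n\mid n\geq 1\}\cup\{\eqref{eq2},\eqref{eq1},\eqref{eq:Cbridge},\eqref{eq31}\}.
\]
By Proposition~\ref{prop:basis-S5}, $\mathsf{V}(S_{(5,2269)})$ is defined by \eqref{eq2}, \eqref{eq1}, \eqref{eq31}, and \eqref{eq:Cbridge}. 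So it suffices to prove two things: that $S_{(5,2269)}$ lies in $\mathsf{V}(ST_7)$, and that each $\sigma_n$ is derivable from \eqref{eq2}, \eqref{eq1}, \eqref{eq31}, and \eqref{eq:Cbridge}.

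For the membership, use Remark~\ref{remark26083101}: $\mathsf{V}(S_{(5,2269)})=\mathsf{V}(S_c(ab),S_{(4,369)})$. By Lemma~\ref{lem26090180}, $S_c(ab)\in\mathsf{V}(SR_6)$. The discussion before Lemma~\ref{lemma26082720} gives $S_{(4,369)}\in\mathsf{V}(TR_6)$. Both are therefore contained in $\mathsf{V}(SR_6,TR_6)=\mathsf{V}(ST_7)$ by Proposition~\ref{prop:C7-join}. Alternatively, one can simply note that $S_{(5,2269)}$ satisfies the basis of $\mathsf{V}(ST_7)$.

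For the derivation of $\sigma_n$, I would argue as in Case (iii) of the proof of Proposition~\ref{prop:basis-S5}. From \eqref{eq31} we obtain $\delta_n$ by the standard iteration. Applying $\delta_n$ with $x_{2n+2}$ replaced by $x_1$ yields
\[
x_1^2\preceq x_1x_2+x_2x_3+\cdots+x_{2n}x_{2n+1}+x_{2n+1}x_1=\mathbf{u}^{(n)}.
\]
Then \eqref{eq2} gives $x\preceq x_1^2\preceq\mathbf{u}^{(n)}$, which is $\sigma_n$.

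The only step needing care is the derivation of $\delta_n$ from \eqref{eq31}. I would proceed by induction on $n$, with $\delta_1=\eqref{eq31}$. For the inductive step, apply $\delta_{n-1}$ to the path $x_1,\ldots,x_{2n}$ to get $x_1x_{2n}\preceq x_1x_2+\cdots+x_{2n-1}x_{2n}$. Then apply \eqref{eq31} to $x_1x_{2n}+x_{2n}x_{2n+1}+x_{2n+1}x_{2n+2}$ to obtain $x_1x_{2n+2}$. This is a routine monotone chain, valid because inequalities are closed under adding terms to the right-hand side. With this, the two bases coincide, and the corollary follows.
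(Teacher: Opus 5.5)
Your proposal is correct and takes the paper's route: the paper proves this corollary just by comparing the bases in Propositions~\ref{prop:basis-C7} and~\ref{prop:basis-S5}, and your derivation of $\sigma_n$ from \eqref{eq31} and \eqref{eq2} (via $\delta_n$ with $x_{2n+2}\mapsto x_1$) spells out a step the paper already uses in Case~(iii) of the proof of Proposition~\ref{prop:basis-S5}. Your two verifications overlap: either the derivability of the $\sigma_n$ or the membership $S_{(5,2269)}\in\mathsf{V}(ST_7)$ alone gives the inclusion of $\mathsf{V}(S_{(5,2269)})$ in the subvariety defined by \eqref{eq31}, and the reverse inclusion is immediate because one basis contains the other.
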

\begin{proof}
This follows immediately from Propositions \ref{prop:basis-C7} and \ref{prop:basis-S5}.
\end{proof}

\begin{corollary}\label{coro26090138}
The intersection of $\mathsf{V}(S_{(5,2269)})$ and $\mathsf{V}(SR_6)$ is $\mathsf{V}(S_c(ab))$.
\end{corollary}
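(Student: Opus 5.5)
The approach is the same as for Corollary~\ref{coro26090215}: work inside $\mathsf{V}(ST_7)$ and compare defining identities.

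\textbf{Upper bound.} By Corollary~\ref{prop:define-S5-ST7}, $\mathsf{V}(S_{(5,2269)})$ is the subvariety of $\mathsf{V}(ST_7)$ defined by \eqref{eq31}. By Corollary~\ref{prop:define-SR6-ST7}, $\mathsf{V}(SR_6)$ is the subvariety of $\mathsf{V}(ST_7)$ defined by \eqref{id2608090150}. Hence the intersection is the subvariety of $\mathsf{V}(SR_6)$ defined by \eqref{eq31}.

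\textbf{Using the subvariety lattice of $\mathsf{V}(SR_6)$.} By Lemma~\ref{lem26090180}, the subvarieties of $\mathsf{V}(SR_6)$ form the chain $\mathbf T\subset\mathsf{V}(T_2)\subset\mathsf{V}(S_c(ab))\subset\mathsf{V}(SR_6)$. So the intersection is one of these four varieties, and it suffices to show two things:
\begin{enumerate}[(a)]
\item $S_c(ab)$ satisfies \eqref{eq31};
\item $SR_6$ does not satisfy \eqref{eq31}.
\end{enumerate}

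\textbf{Step (a).} Fact (a) is immediate from Remark~\ref{remark26083101}: $S_c(ab)$ is a homomorphic image of $S_{(5,2269)}$, so it lies in $\mathsf{V}(S_{(5,2269)})$ and satisfies \eqref{eq31}. This gives $\mathsf{V}(S_c(ab))\subseteq$ the intersection. As a cross-check, Lemma~\ref{lemma26083101}(iv) also gives (a) directly, since $x_1x_4$ is joined to itself by the odd path $x_1x_2x_3x_4$.

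\textbf{Step (b) and the main obstacle.} Fact (b) is where an explicit check is needed. I would use the multiplication table of $SR_6$, where $ab=3$ exactly for $\{a,b\}\in\{\{2,6\},\{4,5\},\{5,6\}\}$. Consider the path $x_1,x_2,x_3,x_4\mapsto 2,6,5,4$. Then $x_1x_2=2\cdot6=3$, $x_2x_3=6\cdot5=3$ and $x_3x_4=5\cdot4=3$, so the right-hand side of \eqref{eq31} evaluates to $3$. The left-hand side is $x_1x_4=2\cdot4=1$. In $SR_6$, $1+3=1\neq3$, so $1\nleq3$ and \eqref{eq31} fails.

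\textbf{Conclusion.} By (b), the intersection is properly contained in $\mathsf{V}(SR_6)$. By (a), it contains $\mathsf{V}(S_c(ab))$. The chain then forces the intersection to equal $\mathsf{V}(S_c(ab))$.
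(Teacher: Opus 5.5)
Your proof is correct. It differs from the paper's only in how the last identification is made.

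Both arguments first reduce to the same observation: the intersection is the subvariety of $\mathsf{V}(SR_6)$ defined by \eqref{eq31}. The paper gets this directly from Proposition~\ref{prop:basis-S5}, together with the fact that $SR_6$ satisfies \eqref{eq2}, \eqref{eq1}, and \eqref{eq:Cbridge}. You go through Corollaries~\ref{prop:define-S5-ST7} and~\ref{prop:define-SR6-ST7}, which amounts to the same thing.

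The paper then cites \cite[Corollary~4.6]{LyuRenYue2026}, which says this subvariety is $\mathsf{V}(S_c(ab))$. You instead derive that fact from the chain of Lemma~\ref{lem26090180} plus two checks:
\begin{enumerate}[(a)]
\item $S_c(ab)$ satisfies \eqref{eq31}, being a quotient of $S_{(5,2269)}$.
\item $SR_6$ violates \eqref{eq31} under $x_1,x_2,x_3,x_4\mapsto 2,6,5,4$, which gives $1\nleq 3$. This evaluation is correct against the Cayley tables.
\end{enumerate}

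The paper's route is a one-line citation. Yours uses only results stated in the paper plus an explicit witness. The dependence on \cite{LyuRenYue2026} is not removed, though: it shifts to the result cited as Lemma~\ref{lem26090180}.

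One small slip in your cross-check: it is $x_1$ and $x_4$ that are joined by the odd path $x_1,x_2,x_3,x_4$. The word $x_1x_4$ is not ``joined to itself''.
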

\begin{proof}
By \cite[Corollary 4.6]{LyuRenYue2026}, $\mathsf{V}(S_c(ab))$ is the subvariety of $\mathsf{V}(SR_6)$
defined by the identity \eqref{eq31}.
Note that \eqref{eq2}, \eqref{eq1}, and \eqref{eq:Cbridge} hold in $\mathsf{V}(SR_6)$.
By Proposition~\ref{prop:basis-S5}, the required result follows.
\end{proof}

\begin{corollary}\label{coro26090130}
The intersection of $\mathsf{V}(S_{(5,2269)})$ and $\mathsf{V}(TR_6)$ is $\mathsf{V}(S_{(4, 369)})$.
\end{corollary}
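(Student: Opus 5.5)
Since both $\mathsf{V}(S_{(5,2269)})$ and $\mathsf{V}(TR_6)$ are subvarieties of $\mathsf{V}(ST_7)$ with known defining identities relative to $\mathsf{V}(ST_7)$, I would compute the intersection equationally, in the same way as Corollaries~\ref{coro26090215} and~\ref{coro26090138}. By Corollary~\ref{prop:define-S5-ST7}, $\mathsf{V}(S_{(5,2269)})$ is the subvariety of $\mathsf{V}(ST_7)$ defined by \eqref{eq31}. By Corollary~\ref{prop:define-TR6-ST7}, $\mathsf{V}(TR_6)$ is the subvariety of $\mathsf{V}(ST_7)$ defined by \eqref{eq3}. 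Hence their intersection is the subvariety of $\mathsf{V}(ST_7)$ defined by \eqref{eq3} together with \eqref{eq31}. Equivalently, it is the subvariety of $\mathsf{V}(TR_6)$ defined by \eqref{eq31}.

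By Proposition~\ref{Sstar-subvariety}, the subvariety of $\mathsf{V}(TR_6)$ defined by \eqref{eq31} is exactly $\mathsf{V}(S_{(4,369)})$. So the intersection equals $\mathsf{V}(S_{(4,369)})$.

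As a sanity check on the inclusion $\mathsf{V}(S_{(4,369)})\subseteq$ intersection, I would also note two facts. First, $S_{(4,369)}$ lies in $\mathsf{V}(TR_6)$, being isomorphic to the subalgebra $\{1,3,5,6\}$ of $TR_6$. Second, it lies in $\mathsf{V}(S_{(5,2269)})$ by Remark~\ref{remark26083101}. This is consistent with the equational argument.

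There is no real obstacle. The only point needing care is that both relative bases are taken with respect to the same ambient variety $\mathsf{V}(ST_7)$. That is what justifies simply taking the union of the defining identities, and it is guaranteed by the two cited corollaries.
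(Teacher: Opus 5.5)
Your proof is correct and is essentially the paper's argument. The paper likewise identifies the intersection with the subvariety of $\mathsf{V}(TR_6)$ defined by \eqref{eq31} and then applies Proposition~\ref{Sstar-subvariety}. The only difference is that it cites the basis in Proposition~\ref{prop:basis-S5} directly, rather than going through the relative bases over $\mathsf{V}(ST_7)$ given in Corollaries~\ref{prop:define-S5-ST7} and~\ref{prop:define-TR6-ST7}.
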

\begin{proof}
This is a direct consequence of Propositions \ref{Sstar-subvariety} and \ref{prop:basis-S5}.
\end{proof}

Let $SR_7$ denote the seven-element ai-semiring whose Cayley tables are given in Table~\ref{tab:N7}.
The following remark shows that $SR_7$ is also a member of $\mathsf{V}(ST_7)$.

\begin{remark}\label{remark26083102}
We note that $\mathsf{V}(SR_7)=\mathsf{V}(SR_6,S_{(4,369)})$,
since $SR_7$ is isomorphic to a subdirect
product of $SR_6$ and $S_{(4,369)}$ via two congruences: one with nontrivial block $\{1,7\}$,
and the other with nontrivial blocks $\{2,5\}$, $\{3,7\}$, and $\{4,6\}$.
\end{remark}

\begin{table}[htbp]
\centering
\caption{The Cayley tables of $SR_7$}\label{tab:N7}
\small
\renewcommand{\arraystretch}{1.08}
\setlength{\tabcolsep}{4.8pt}
\begin{tabular}{c|ccccccc}
$+$&1&2&3&4&5&6&7\\ \hline
1&1&1&1&1&1&1&1\\
2&1&2&7&7&2&7&7\\
3&1&7&3&7&7&7&7\\
4&1&7&7&4&7&4&7\\
5&1&2&7&7&5&7&7\\
6&1&7&7&4&7&6&7\\
7&1&7&7&7&7&7&7
\end{tabular}
\hspace{1.2cm}
\begin{tabular}{c|ccccccc}
$\cdot$&1&2&3&4&5&6&7\\ \hline
1&1&1&1&1&1&1&1\\
2&1&1&1&7&1&3&1\\
3&1&1&1&1&1&1&1\\
4&1&7&1&1&3&1&1\\
5&1&1&1&3&1&3&1\\
6&1&3&1&1&3&1&1\\
7&1&1&1&1&1&1&1
\end{tabular}
\end{table}

We shall use the following inequality:
\begin{align}
 x_1x_4&\preceq x_1+x_1x_2+x_2x_3+x_3x_4. \label{eq:Cpath}
\end{align}
By induction on $n$, the inequality~\eqref{eq:Cpath} yields
\begin{equation*}
 x_1x_{2n+2}\preceq  x_1+x_1x_2+x_2x_3+\cdots+x_{2n+1}x_{2n+2} \qquad (n \geq 1),
\end{equation*}
which we denote by $\tau_n$ $(n \geq 1)$.
The following result provides an infinite equational basis for $SR_7$.

\begin{proposition}\label{prop:basis-N7}
The commutative ai-semiring variety $\mathsf{V}(SR_7)$ is defined by the
identities \eqref{eq2}, \eqref{eq1}, \eqref{eq:Cbridge}, \eqref{eq:Cpath}, and $\sigma_n$ for all $n\geq1$.
\end{proposition}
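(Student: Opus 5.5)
The plan follows the pattern of Propositions~\ref{prop:basis-C7} and~\ref{prop:basis-S5}. First, soundness. $SR_7$ satisfies \eqref{eq2}, \eqref{eq1}, \eqref{eq:Cbridge} and $\sigma_n$ because, by Remark~\ref{remark26083102}, it lies in $\mathsf{V}(SR_6,S_{(4,369)})\subseteq\mathsf{V}(ST_7)$, and Proposition~\ref{prop:basis-C7} applies. For \eqref{eq:Cpath} we argue as follows. $S_{(4,369)}$ satisfies \eqref{eq31}, hence it satisfies \eqref{eq:Cpath}. $SR_6$ satisfies $x_1x_4\preceq x_1+x_1x_2$, since in $\mathsf{V}(SR_6)$ we have $x+xy\approx x^2$ by \eqref{id2608090150}, and $x^2$ is the top element by \eqref{eq2}. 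Since $SR_7$ is a subdirect product of $SR_6$ and $S_{(4,369)}$, it satisfies \eqref{eq:Cpath}. Alternatively, one can check \eqref{eq:Cpath} directly in the table.

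For completeness, let $\bq\preceq\bu$ be a nontrivial inequality of $SR_7$. Since $\mathsf{V}(SR_7)\subseteq\mathsf{V}(ST_7)$, it also holds in $ST_7$. Cases (i)--(iii) (a word of length at least three, a nonlinear word, or an odd cycle in $\mathbb{G}_\bu$) are handled verbatim as before, using $\sigma_n$, \eqref{eq2} and \eqref{eq1}. So assume $\bu=L_1(\bu)+L_2(\bu)$ with $L_2(\bu)$ linear and $\mathbb{G}_\bu$ bipartite. The assignment from Proposition~\ref{prop:basis-C7}, extended by the value $1$ outside $c(\bu)$, shows that we may further assume $c(\bq)\subseteq c(\bu)$ and that $\bq$ is a linear word of length at most two. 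If $\ell(\bq)=1$, the argument of Proposition~\ref{prop:basis-C7} gives $c(L_1(\bu))\cap c(L_2(\bu))\ne\emptyset$, and \eqref{eq:Cbridge} finishes the case.

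The new work is the case $\bq=xy$ with $xy\notin\bu$. Here $S_{(4,369)}\in\mathsf{V}(SR_7)$ satisfies $\bq\preceq\bu$, and $SR_6$ does too. I would first use an $S_{(4,369)}$-type assignment (as for $\psi$ in Proposition~\ref{prop:basis-S5}, with values $5,6$ on a flipped bipartition and $3$ on $L_1$-only variables) to force $x$ and $y$ to lie in the same component of $\mathbb{G}_\bu$ and to be joined by an odd path $x=z_1,z_2,\dots,z_{2k+2}=y$. Next, I would use a homomorphism into $SR_6$ (equivalently, into $SR_7$ itself, whose product $2\cdot4=7$ makes the linear-word $L_1$ data visible) to force some variable of that component to occur in $L_1(\bu)$. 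The derivation then runs as follows. From $z+zw\in\bu$ with $z$ in the component, we use the odd path structure to obtain $\tau_k$-type consequences. Concretely, choose an endpoint: if $x\in c(L_1(\bu))$, then $\tau_k$ gives $\bu\succeq x+xz_2+\dots+z_{2k+1}y\succeq xy$ directly. Otherwise we need to propagate the singleton term along the path. For instance, if $z_j\in L_1(\bu)$, I would use \eqref{eq:Cbridge} to obtain the singleton $z_1=x$ from $z_j+z_jz_{j+1}+xz_2$, and then apply $\tau_k$.

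The main obstacle is precisely this last case, specifically the semantic step. We must show that if no variable in the component of $x,y$ (or at least no variable reachable appropriately) occurs in $L_1(\bu)$, then $SR_7$ falsifies $xy\preceq\bu$. I would construct the witness by mapping that component with values $2$ and $4$ in $SR_7$, arranged so that $x\mapsto2$ and $y\mapsto4$ with product $7$. The other edges of the component must then go to $3$, using the pairs $(5,4),(2,6),(5,6)$ on a bipartition adjusted as in Proposition~\ref{wordprop}. Remaining components go to $5,6$, and $L_1$-only variables go to $3$. This gives $\varphi(\bu)=3$ or $7$ while $\varphi(\bq)=7$ fails unless the sum is $7$. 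The details of the table ($2+3=7$, $4+3=7$) need care, and the exact combinatorial condition (which component vertex lies in $L_1$) must be matched with what \eqref{eq:Cbridge} and \eqref{eq:Cpath} can derive. I would first verify that any vertex of the component suffices, by checking that \eqref{eq:Cbridge} indeed supplies the singleton $x$ from any $z+zw$ in $\bu$.
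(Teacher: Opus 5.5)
The step you flag as the main obstacle is where the proposal breaks, because the criterion you set out to prove is false. You aim to show that if no variable of the component containing $x,y$ lies in $L_1(\bu)$, then $SR_7$ falsifies $xy\preceq\bu$; earlier you also propose to force such a variable by a homomorphism.

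Here is a counterexample: $\bu=w+ww'+xz_2+z_2z_3+z_3y$ and $\bq=xy$. The component $\{x,z_2,z_3,y\}$ contains no $L_1$-variable. Yet the instance $x\preceq w+ww'+xz_2$ of \eqref{eq:Cbridge}, followed by \eqref{eq:Cpath}, gives $\bu\succeq x+xz_2+z_2z_3+z_3y\succeq xy$, so the inequality holds in $SR_7$.

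Components play no role here. In the form $x_1\preceq x_2+x_2x_3+x_1x_4$, \eqref{eq:Cbridge} yields the singleton $x$ from any $z+zt$ in $\bu$ together with any edge at $x$. What you must show is the global condition $c(L_1(\bu))\cap c(L_2(\bu))\neq\emptyset$, and your own witness proves exactly that. Send $x\mapsto2$ and $y\mapsto4$, the rest of a bipartition $(A,B)$ with $x\in A$, $y\in B$ to $5$ and $6$, and $L_1$-only variables to $3$.
\begin{itemize}
\item Every edge goes to $3$; the pair $\{2,4\}$ cannot occur since $xy\notin\bu$.
\item Hence $\bu$ goes to $3$ precisely when that intersection is empty. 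Otherwise it goes to $7$, as $2+3=4+3=5+3=6+3=7$.
\item Meanwhile $\bq$ goes to $2\cdot4=7\not\le3$.
\end{itemize}
Then take any $z$ in the intersection, with $zt\in L_2(\bu)$. This gives $\bu\succeq z+zt+xz_2+\cdots+z_{2k+1}y\succeq x+xz_2+\cdots+z_{2k+1}y\succeq xy$ by \eqref{eq:Cbridge} and $\tau_k$. This is the paper's argument, with its homomorphism $\theta$.

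There is also a logical slip earlier. From $\mathsf{V}(SR_7)\subseteq\mathsf{V}(ST_7)$ it does not follow that an inequality of $SR_7$ holds in $ST_7$: identities of the larger variety hold in the smaller one, not conversely. Indeed, \eqref{eq:Cpath} holds in $SR_7$ but fails in $TR_6$, hence in $ST_7$. If your claim were true, Proposition~\ref{prop:basis-C7} would finish the proof without \eqref{eq:Cpath}.

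The damage is local. The assignments you borrow from Proposition~\ref{prop:basis-C7} (values $5,6,3,1$) can be evaluated in $SR_7$ itself, whose tables agree with those of $ST_7$ except at $2\cdot4$. So $\varphi(\bu)\in\{3,7\}$ and $5,6\not\le3$ still give $c(\bq)\subseteq c(\bu)$, $\bq$ linear of length at most two, and the case $\ell(\bq)=1$.

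With both corrections your route coincides with the paper's. The one difference is that you obtain the odd path from $S_{(4,369)}$ rather than from the $S_c(ab)$ characterization in Lemma~\ref{lemma26083101}. That is a valid and slightly more self-contained alternative.
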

\begin{proof}
The proof is somewhat lengthy and is divided into two parts.
We first show that $SR_7$ satisfies the
identities \eqref{eq2}, \eqref{eq1}, \eqref{eq:Cbridge}, \eqref{eq:Cpath}, and $\sigma_n$ for all $n\geq1$.
Indeed, it is easy to verify that $SR_7$ satisfies the identities~\eqref{eq2},
\eqref{eq1}, \eqref{eq:Cbridge}, and \eqref{eq:Cpath}.
By Remark~\ref{remark26082910}, $SR_6$ satisfies $\sigma_n$ for every
$n\geq1$. Since $S_{(4,369)}$ belongs to $\mathsf{V}(TR_6)$, it follows from
Lemma~\ref{R6identities} that $S_{(4,369)}$ satisfies $\sigma_n$ for every $n\geq1$.
By Remark~\ref{remark26083102}, we obtain that $SR_7$ also satisfies $\sigma_n$ for every $n\geq1$.

It remains to show that every inequality satisfied by $SR_7$ is
derivable from \eqref{eq2}, \eqref{eq1}, \eqref{eq:Cbridge}, \eqref{eq:Cpath}, and $\sigma_n$ for all $n\geq 1$.
Let $\mathbf q\preceq\mathbf u$ be such a nontrivial inequality, where
$\mathbf u=\mathbf u_1+\cdots+\mathbf u_n$ with $\mathbf u_i, \mathbf q\in X_c^+$ for all $1\leq i \leq n$.
Since $S_c(ab)$ lies in $\mathsf{V}(SR_6)$,
it follows from Remark~\ref{remark26083102} that $S_c(ab)$ belongs to $\mathsf{V}(SR_7)$,
and so $\mathbf q\preceq\mathbf u$ holds in $S_c(ab)$.
By Lemma~\ref{lemma26083101}, one of the following cases occurs:
\begin{enumerate}[(\rm i)]
\item $\ell(\mathbf u_i)\geq3$ for some $\mathbf u_i\in\mathbf u$;
\item $c(L_1(\mathbf u))\cap c(L_2(\mathbf u))\neq\emptyset$;
\item the graph $\mathbb{G}_{\bu}$ contains an odd cycle;
\item $\mathbf q\in \mathbb{G}_{\bu}^{odd}$.
\end{enumerate}

If Case~{\rm (i)} occurs, then
\[
\mathbf u\succeq\mathbf u_i
\overset{\eqref{eq1}}{\approx}x^2
\overset{\eqref{eq2}}{\succeq}\mathbf q.
\]

Suppose that Case~{\rm (iii)} occurs. Then $\mathbb{G}_{\bu}$ contains an odd cycle.
If this cycle is a loop, then $\bu$ contains $x^2$ for some $x \in X$, and so
\[
\mathbf u\succeq\mathbf u_i=x^2\overset{\eqref{eq2}}{\succeq}\mathbf q.
\]
If this cycle is of length $2k+1$ for some $k \geq 1$,
then $\bu$ has an additive subterm of the form
$x_1x_2+x_2x_3+\cdots+x_{2k}x_{2k+1}+x_{2k+1}x_{1}$, and so
\[
\mathbf u\succeq x_1x_2+x_2x_3+\cdots+x_{2k}x_{2k+1}+x_{2k+1}x_{1} \overset{\sigma_k}{\succeq} \bq.
\]

Now suppose that Case~{\rm (ii)} occurs but Case~{\rm (iv)} does not. Since
Case~{\rm (iii)} is excluded, we may also assume that $\mathbb{G}_{\bu}$ contains no odd cycle.
Then $\mathbb{G}_{\bu}$ is bipartite.
Choose a bipartition $(A, B)$ of $\mathbb{G}_{\bu}$ and define a semiring
homomorphism $\varphi:P_f(X_c^+)\to SR_7$ by
\[
\varphi(t)=
\begin{cases}
5,&t\in A,\\
6,&t\in B,\\
3,&t\in c(L_1(\mathbf u))\setminus c(L_2(\mathbf u)),\\
1,&\text{otherwise}.
\end{cases}
\]
Then $\varphi(L_2(\mathbf u))= 5\cdot 6=3$, and $\varphi(t)=5$ or $6$ for every $t \in c(L_1(\mathbf u))\cap c(L_2(\mathbf u))$.
Since $5+3=6+3=7$ in $SR_7$,
it follows that $\varphi(L_1(\mathbf u))+\varphi(L_2(\mathbf u))=\varphi(L_1(\mathbf u))+3=7$,
and so
\[
\varphi(\mathbf u)=\varphi(L_1(\mathbf u)+L_2(\mathbf u))
=\varphi(L_1(\mathbf u))+\varphi(L_2(\mathbf u))=7,
\]
and so $\varphi(\mathbf q) \leq 7$.
Note that $1$ is the multiplicative zero and the additive maximum element of $SR_7$.
Thus $\varphi(\mathbf q) \neq 1$, $c(\mathbf q)\subseteq c(\mathbf u)$, and $\mathbf q$ is a linear word of length at most two.

Suppose first that $\bq=xy$ for some distinct elements $x, y \in X$.
If $x\notin c(L_2(\bu))$, then $x\in c(L_1(\mathbf u))\setminus c(L_2(\mathbf u))$,
and so $\varphi(x)=3$. Furthermore, we have
\[
\varphi(\bq)=\varphi(xy)=\varphi(x)\varphi(y)=3\cdot \varphi(y)=1,
\]
a contradiction. Hence $x\in c(L_2(\bu))$. Similarly, $y\in c(L_2(\bu))$.
Thus $x, y \in A \cup B$.
If $x, y \in A$ or $x, y \in B$, then $\varphi(x)=\varphi(y)$,
and so
\[
\varphi(\bq)=\varphi(xy)=\varphi(x)\varphi(y)=\varphi(x)^2=1,
\]
a contradiction. Therefore, either $x\in A, y\in B$ or $x\in B, y\in A$.
Consequently, any path in $\mathbb{G}_{\mathbf u}$ connecting $x$ and $y$ must have odd length.

Without loss of generality, let $x\in A$ and $y\in B$. Define
\[
N(x)=\bigl\{z\in c(L_2(\mathbf u))\mid z=x
\text{ or there is a path between $z$ and $x$}\bigr\},
\]
and define
$\psi:P_f(X_c^+)\to SR_7$ by
\[
\psi(t)=
\begin{cases}
5,&t\in(A\setminus N(x))\cup(B\cap N(x)),\\
6,&t\in(B\setminus N(x))\cup(A\cap N(x)),\\
3,&t\in c(L_1(\mathbf u))\setminus c(L_2(\mathbf u)),\\
1,&\text{otherwise}.
\end{cases}
\]
Then $\psi(x)=6$ and $\psi(\mathbf u)=7$.
This implies that
\[
6\cdot \psi(y)=\psi(x)\cdot \psi(y)=\psi(xy)=\psi(\mathbf q)\leq \psi(\mathbf u)=7,
\]
and so $6\cdot \psi(y)=3$. By the multiplicative Cayley table of $SR_7$,
$\psi(y)=2$ or $5$. By the definition of $\psi$, $\psi(y)=5$, and so $y\in B\cap N(x)$.
Thus $x$ and $y$ are joined by an odd path, and so $\bq\in \mathbb{G}_\bu^{odd}$,
contradicting the exclusion of Case (iv). Therefore,
$\ell(\mathbf q)=1$.

We write $\bq=z$ for some $z\in X$.
Then $z\in c(L_2(\mathbf u))$, and so $zt\in L_2(\mathbf u)$ for some $t\in X$.
Choose $x\in c(L_1(\mathbf u))\cap c(L_2(\mathbf u))$. Then $xy\in L_2(\mathbf u)$ for some $y\in X$.
Now we have
\[
\mathbf u\succeq x+xy+zt
\overset{\eqref{eq:Cbridge}}{\succeq}z=\mathbf q.
\]

Finally, suppose that Case~{\rm (iv)} is true, and assume that neither Cases~{\rm (i)} nor {\rm (iii)} occurs.
Then $\bu=L_1(\bu)+L_2(\bu)$, $\mathbb{G}_{\bu}$ is bipartite, and $\mathbf q\in \mathbb{G}_{\bu}^{odd}$.
Hence $\ell(\bq)=2$,
and so $\bq$ is either a square of a variable or a linear word of length two.
If $\bq$ is a square of a variable, then $\bq=x^2$ for some $x \in X$,
and so $\mathbb{G}_{\mathbf{u}}$ contains an odd cycle through $x$, contradicting the exclusion of Case {\rm (iii)}.
Thus $\bq$ is a linear word of length two.
We write $\bq=xy$ for some distinct variables $x, y\in X$.
Since $\bq\in \mathbb{G}_{\bu}^{odd}$, there exists an odd path in $\mathbb{G}_{\bu}$ connecting $x$ and $y$.
Hence $L_2(\mathbf u)$ has an additive subterm of the form
\[
x_1x_2+x_2x_3+\cdots+x_{2m+1}x_{2m+2}
\]
for some $m\geq 1$, where $x_1=x$ and $x_{2m+2}=y$.

We claim that
$c(L_1(\mathbf u))\cap c(L_2(\mathbf u))\neq\emptyset$.
Suppose for contradiction that this is not true.
Since $\mathbb{G}_{\bu}$ is bipartite, one can choose a bipartition $(A, B)$ of $\mathbb{G}_{\bu}$
such that $x_1\in A$ and $x_{2m+2}\in B$.
Since the inequality $\bq\preceq \bu$ is nontrivial,
$x_1x_{2m+2}\notin L_2(\mathbf u)$. Define
$\theta: P_f(X_c^+)\to SR_7$ by
\[
\theta(v)=
\begin{cases}
2,&v=x_1,\\
4,&v=x_{2m+2},\\
5,&v\in A\setminus\{x_1\},\\
6,&v\in B\setminus\{x_{2m+2}\},\\
3,&v\in c(L_1(\mathbf u)),\\
1,&\text{otherwise}.
\end{cases}
\]
Then
\[
\theta(\bu)=\theta(L_1(\bu)+L_2(\bu))=\theta(L_1(\bu))+\theta(L_2(\bu))=3+3=3,
\]
whereas
\[
\theta(\mathbf q)=\theta(xy)=\theta(x_1x_{2m+2})=\theta(x_1)\theta(x_{2m+2})=2\cdot4=7.
\]
Since $7 \nleq 3$ in $SR_7$, it follows that $\theta(\mathbf q)\nleq \theta(\bu)$, a contradiction.
We have shown that $c(L_1(\mathbf u))\cap c(L_2(\mathbf u))\neq\emptyset$.

Now choose $z\in c(L_1(\mathbf u))\cap c(L_2(\mathbf u))$.
Then $zt\in L_2(\mathbf u)$ for some $t\in X$, and so
\[
\begin{aligned}
\mathbf u
&\succeq z+zt+x_1x_2+x_2x_3+\cdots+x_{2m+1}x_{2m+2}\\
&\overset{\eqref{eq:Cbridge}}{\succeq}
 x_1+x_1x_2+x_2x_3+\cdots+x_{2m+1}x_{2m+2}\\
&\overset{\tau_m}{\succeq}x_1x_{2m+2}=xy=\mathbf q.
\end{aligned}
\]
This completes the proof.
\end{proof}

\begin{corollary}\label{cor:N7-nfb}
The ai-semiring $SR_7$ is nonfinitely based.
\end{corollary}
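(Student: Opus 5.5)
This follows the pattern of Corollary~\ref{cor:C7-nfb}. Let $\Sigma$ be the basis of Proposition~\ref{prop:basis-N7}, that is, $\{\eqref{eq2},\eqref{eq1},\eqref{eq:Cbridge},\eqref{eq:Cpath}\}\cup\{\sigma_n\mid n\geq1\}$. By Lemma~\ref{compactness}, it suffices to show that no finite subset $\Sigma'\subseteq\Sigma$ defines $\mathsf{V}(SR_7)$. Given such a $\Sigma'$, choose $n\geq2$ larger than every index $k$ with $\sigma_k\in\Sigma'$, and put
\[
\Sigma_n=\{\eqref{eq2},\eqref{eq1},\eqref{eq:Cbridge},\eqref{eq:Cpath}\}\cup\{\sigma_k\mid 1\leq k<n\}.
\]
Then $\Sigma'\subseteq\Sigma_n$, and it remains to show that $\Sigma_n$ does not derive $\sigma_n$, i.e.\ $\mathbf u^{(n)}\approx\mathbf u^{(n)}+x$.

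The key step is to show that no identity of $\Sigma_n$ can be applied to $\mathbf u^{(n)}$ at all. Concretely, for every side $\bs$ of every identity in $\Sigma_n$, we need that there are no $\bp,\br$ and substitution $\varphi$ with $\mathbf u^{(n)}=\bp\varphi(\bs)+\br$. Once this holds, the first step of any derivation sequence from Lemma~\ref{eqyield} starting at $\mathbf t_1=\mathbf u^{(n)}$ is impossible. Since $\sigma_n$ is nontrivial, this gives the contradiction.

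To verify the freeness condition, go through the sides one by one.
\begin{itemize}
\item The sides of \eqref{eq2} and \eqref{eq1} are $x$ (as $x\preceq y^2$ means $y^2\approx y^2+x$), $y^2$, $y^2+x$, $x^2$ and $x_1x_2x_3$. Every side other than the bare word $x$ contains a square or a word of length at least $3$, or contains such a word together with extra summands. The term $\mathbf u^{(n)}$ is $x^2$-free and $x_1x_2x_3$-free, because all its words are linear of length $2$ and substitution images of squares or of length-$3$ words have length at least $2$ and are nonlinear or longer. Lemma~\ref{freelemma} then handles the sides containing these words.
\item The side $x$ alone must be excluded from the statement. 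This is harmless, because no identity in $\Sigma_n$ has the bare term $x$ as a whole side: inequalities are written as $\mathbf v\approx\mathbf v+\mathbf u$.
\item The sides of \eqref{eq:Cbridge} are $x_2+x_2x_3+x_1x_4$ and its sum with $x_1$. Both contain $x_2+x_2x_3$ as an additive subterm, which is a substitution of $x+xy$. Since $\mathbf u^{(n)}$ is $(x+xy)$-free, Lemma~\ref{freelemma} applies.
\item The sides of \eqref{eq:Cpath} are $x_1+x_1x_2+x_2x_3+x_3x_4$ and its sum with $x_1x_4$. Both contain $x_1+x_1x_2$, which is again a substitution of $x+xy$, so the same argument applies.
\item The sides of $\sigma_k$ for $k<n$ contain $\mathbf u^{(k)}$. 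By \cite[Proposition~3.1]{LyuRenYue2026}, $\mathbf u^{(n)}$ is $\mathbf u^{(k)}$-free.
\end{itemize}

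The main point to check carefully is the $(x+xy)$-freeness of $\mathbf u^{(n)}$, since it is what kills both new identities \eqref{eq:Cbridge} and \eqref{eq:Cpath}. Suppose $\bp\varphi(x+xy)$ were a subterm of $\mathbf u^{(n)}$. Then $\bp\varphi(x)$ would consist of words that are each a proper factor of a word $\bp\varphi(x)\varphi(y)$ in $\mathbf u^{(n)}$, so those words would have length at most $1$. Yet $\mathbf u^{(n)}$ contains no words of length $1$, a contradiction. (The paper already states this freeness in Corollary~\ref{cor:C7-nfb}.)

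With these facts, the argument is identical to that of Corollary~\ref{cor:C7-nfb}, and $SR_7$ is nonfinitely based.
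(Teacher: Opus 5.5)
Your proposal is correct and follows the paper's proof essentially verbatim. Both use compactness together with the fact that $\mathbf u^{(n)}$ is $x^2$-, $x_1x_2x_3$-, $(x+xy)$- and $\mathbf u^{(k)}$-free, with Lemma~\ref{freelemma} transferring $(x+xy)$-freeness to the sides of \eqref{eq:Cbridge} and \eqref{eq:Cpath}, so no step of a derivation as in Lemma~\ref{eqyield} can start at $\mathbf u^{(n)}$; your explicit length argument for $(x+xy)$-freeness fills in a detail the paper only asserts, and the detour about a bare side $x$ is unnecessary since the sides of \eqref{eq2} are $y^2$ and $y^2+x$.
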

\begin{proof}
Let $\Sigma$ denote the set $\{\eqref{eq2},\eqref{eq1},\eqref{eq:Cbridge},\eqref{eq:Cpath}\}\cup\{\sigma_n\mid n\geq1\}$.
By Proposition~\ref{prop:basis-N7}, $\Sigma$ is an equational basis of $SR_7$.
By Lemma~\ref{compactness}, it suffices to show that no finite subset of $\Sigma$ defines $\mathsf{V}(SR_7)$.

Let $\Sigma'$ be an arbitrary finite subset of $\Sigma$. Choose $n\geq2$
greater than every index $k$ for which $\sigma_k\in\Sigma'$, and put
\[
\Sigma_n=\{\eqref{eq2},\eqref{eq1},\eqref{eq:Cbridge},\eqref{eq:Cpath}\}
\cup\{\sigma_k\mid1\leq k<n\}.
\]
Then $\Sigma_n$ contains $\Sigma'$, but does not contain $\sigma_n$.
To show that $\Sigma'$ cannot derive $\sigma_n$,
we only need to prove $\Sigma_n$ cannot derive $\sigma_n$.

Indeed, it is easy to see that the term $\mathbf u^{(n)}$ is $x^2$-free,
$x_1x_2x_3$-free, and $(x+xy)$-free. Since $x+xy$ is a subterm of both
$x+xy+zt$ and $x_1+x_1x_2+x_2x_3+x_3x_4$, it follows from Lemma~\ref{freelemma} that
$\mathbf u^{(n)}$ is also free for these two terms. Moreover, by
\cite[Proposition~3.1]{LyuRenYue2026}, $\mathbf u^{(n)}$ is
$\mathbf u^{(k)}$-free for every $1\leq k<n$.
Consequently, for any identity in $\Sigma_n$ and any term $\bs$ occurring on either side of it,
there do not exist terms $\bp, \br$ and a substitution $\varphi$ such that
\[
\mathbf{u}^{(n)} = \bp \varphi(\bs) + \br.
\]
By Lemma~\ref{eqyield}, $\Sigma_n$ cannot derive
$\sigma_n$. Therefore, $SR_7$ is nonfinitely based.
\end{proof}

\begin{corollary}\label{prop:define-N7-ST7}
The variety $\mathsf{V}(SR_7)$ is the subvariety of $\mathsf{V}(ST_7)$
defined by the inequality~\eqref{eq:Cpath}.
\end{corollary}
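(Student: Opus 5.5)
By Proposition~\ref{prop:basis-C7}, $\mathsf{V}(ST_7)$ is defined by \eqref{eq2}, \eqref{eq1}, \eqref{eq:Cbridge}, and $\sigma_n$ $(n\geq1)$. By Proposition~\ref{prop:basis-N7}, $\mathsf{V}(SR_7)$ is defined by exactly these identities together with \eqref{eq:Cpath}. So the plan is simply to compare the two bases.

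The subvariety of $\mathsf{V}(ST_7)$ defined by \eqref{eq:Cpath} is the class of commutative ai-semirings satisfying \eqref{eq2}, \eqref{eq1}, \eqref{eq:Cbridge}, $\sigma_n$ $(n\geq 1)$, and \eqref{eq:Cpath}. By Proposition~\ref{prop:basis-N7}, this class is precisely $\mathsf{V}(SR_7)$.

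It is worth checking that $\mathsf{V}(SR_7)$ is indeed contained in $\mathsf{V}(ST_7)$, so that the phrase ``subvariety of $\mathsf{V}(ST_7)$'' is meaningful. This follows from the bases themselves, since the basis of $\mathsf{V}(SR_7)$ contains that of $\mathsf{V}(ST_7)$. Alternatively, by Remark~\ref{remark26083102}, $\mathsf{V}(SR_7)=\mathsf{V}(SR_6,S_{(4,369)})$, and both $SR_6$ and $S_{(4,369)}$ lie in $\mathsf{V}(SR_6,TR_6)=\mathsf{V}(ST_7)$ by Proposition~\ref{prop:C7-join}.

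There is no real obstacle here: all the work has already been done in Propositions~\ref{prop:basis-C7} and~\ref{prop:basis-N7}. The corollary follows immediately, in the same way as Corollary~\ref{prop:define-S5-ST7}.
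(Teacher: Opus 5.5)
Your proposal is correct and matches the paper's proof, which likewise just observes that the basis of $\mathsf{V}(SR_7)$ in Proposition~\ref{prop:basis-N7} is the basis of $\mathsf{V}(ST_7)$ from Proposition~\ref{prop:basis-C7} with \eqref{eq:Cpath} added. The containment $\mathsf{V}(SR_7)\subseteq\mathsf{V}(ST_7)$ that you check separately is not needed, since it already follows from that comparison of bases.
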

\begin{proof}
This follows immediately from Propositions \ref{prop:basis-C7} and \ref{prop:basis-N7}.
\end{proof}

\section{The subvariety lattice of the variety $\mathsf{V}(ST_7)$}
Having established in the previous section that $\mathsf{V}(ST_7)$ is a nonfinitely based variety,
and having introduced its two important members $S_{(5,2269)}$ and $SR_7$,
it is natural to ask what its subvarieties are and how they are organized.
In this section, we describe the subvariety lattice of $\mathsf{V}(ST_7)$ in full detail.

\begin{proposition}\label{prop:minimal-ST7}
The variety $\mathsf{V}(T_2)$ is the only minimal nontrivial subvariety of
$\mathsf{V}(ST_7)$.
\end{proposition}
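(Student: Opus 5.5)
The plan mirrors the proof of Proposition~\ref{minimal-subvariety}. First, $\mathsf{V}(T_2)$ is a subvariety of $\mathsf{V}(ST_7)$. This holds because $T_2$ lies in $\mathsf{V}(TR_6)$, which is contained in $\mathsf{V}(SR_6,TR_6)=\mathsf{V}(ST_7)$ by Proposition~\ref{prop:C7-join}. Alternatively, Corollary~\ref{prop:define-T2-ST7} already presents $\mathsf{V}(T_2)$ as a subvariety of $\mathsf{V}(ST_7)$.

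Next, let $\mathcal V$ be any minimal nontrivial subvariety of $\mathsf{V}(ST_7)$. Since $\mathcal V$ is in particular a minimal nontrivial ai-semiring variety, \cite[Theorem~1.1]{Polin1980} and \cite[Table~1]{ShaoRen2015} show it is one of $\mathsf{V}(L_2)$, $\mathsf{V}(R_2)$, $\mathsf{V}(M_2)$, $\mathsf{V}(D_2)$, $\mathsf{V}(N_2)$, $\mathsf{V}(T_2)$.

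By Proposition~\ref{prop:basis-C7}, $\mathsf{V}(ST_7)$ satisfies the inequality \eqref{eq2}, $x\preceq y^2$. It therefore suffices to check that each of $L_2,R_2,M_2,D_2,N_2$ fails \eqref{eq2}. The paper has already asserted this in the proof of Proposition~\ref{minimal-subvariety}. For the record, the checks are as follows:
\begin{itemize}
\item In each of the multiplicatively idempotent two-element algebras $L_2,R_2,M_2,D_2$, the inequality specializes to $x\preceq y$ for all $x,y$. This forces the order to be trivial, which is impossible on two elements.
\item In $N_2$, where the product of any two elements is the additive bottom $0$, taking $x=1$ gives $1\leq 0$, which is false.
\end{itemize}
Hence $\mathcal V=\mathsf{V}(T_2)$, and $\mathsf{V}(T_2)$ is the only minimal nontrivial subvariety.

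No step is a real obstacle. The only care needed is to recall the Cayley tables of the five two-element semirings, for instance whether $N_2$ has null multiplication landing at the additive bottom, so that the failure of \eqref{eq2} is genuine. If some table is recalled differently, I would instead use \eqref{eq1}, $x^2\approx x_1x_2x_3$, together with \eqref{eq2} to exclude that case.
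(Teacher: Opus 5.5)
Your proposal is correct and follows the paper's proof essentially step for step. You get $T_2\in\mathsf{V}(TR_6)\subseteq\mathsf{V}(ST_7)$, invoke the Polin/Shao--Ren list of minimal ai-semiring varieties, and exclude $L_2,R_2,M_2,D_2,N_2$ because they fail \eqref{eq2}, which holds in $ST_7$; your explicit check of that failure (multiplicative idempotence forcing a trivial order, and $1\leq 0$ in $N_2$) just fills in the verification the paper leaves to the reader.
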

\begin{proof}
Since $T_2$ belongs to $\mathsf{V}(TR_6)$, it also belongs to
$\mathsf{V}(ST_7)$. By \cite[Theorem~1.1]{Polin1980} and
\cite[Table~1]{ShaoRen2015}, the minimal nontrivial subvarieties of the
variety of all ai-semirings are
\[
\mathsf{V}(L_2),\quad \mathsf{V}(R_2),\quad \mathsf{V}(M_2),\quad
\mathsf{V}(D_2),\quad \mathsf{V}(N_2),\quad \mathsf{V}(T_2).
\]
None of $L_2, R_2, M_2, D_2$, or $N_2$ satisfies the identity~\eqref{eq2}, which holds in $ST_7$.
Therefore, $\mathsf{V}(T_2)$ is the only minimal nontrivial subvariety of
$\mathsf{V}(ST_7)$.
\end{proof}

We now establish three exclusion criteria for subvarieties of $\mathsf{V}(ST_7)$.
These characterize, for certain algebras $A$,
precisely when a subvariety $\mathcal{V}$ of $\mathsf{V}(ST_7)$ does not contain $A$.

%\begin{proposition}\label{prop:exclude-T2-ST7}
%Let $\mathcal V$ be a subvariety of $\mathsf{V}(ST_7)$. Then $\mathcal V$
%does not contain $T_2$ if and only if $\mathcal V$ satisfies the identity
%\[
%x\approx x+x^2.
%\]
%\end{proposition}
%\begin{proof}
%The displayed identity fails in $T_2$, so one implication is immediate.
%Conversely, suppose that it fails in some $S\in\mathcal V$. Then
%$a\neq a+a^2$ for some $a\in S$. Identity~\eqref{eq2} gives
%$a+a^2=a^2$, and the subsemiring $\{a,a^2\}$ of $S$ is isomorphic to
%$T_2$. Hence $\mathcal V$ contains $T_2$.
%\end{proof}

\begin{proposition}\label{prop:exclude-S4369-ST7}
Let $\mathcal V$ be a subvariety of $\mathsf{V}(ST_7)$. Then $\mathcal V$
does not contain $S_{(4,369)}$ if and only if $\mathcal V$ satisfies the identity \eqref{id2608090150}.
\end{proposition}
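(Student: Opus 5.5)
The forward implication is a direct check in $S_{(4,369)}$; for the converse I would adapt the first half of the proof of Proposition~\ref{subvariety-lattice}, replacing the identity \eqref{eq3} (which fails in $\mathsf{V}(ST_7)$) by \eqref{eq:Cbridge}.

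\emph{Easy direction.} I would show that $S_{(4,369)}$ fails \eqref{id2608090150}. Take $x\mapsto 3$ and $y\mapsto 4$. Then $x^2\mapsto 3\cdot 3=2$, while $x+xy\mapsto 3+3\cdot 4=3+1=1\neq 2$. Hence any subvariety satisfying \eqref{id2608090150} cannot contain $S_{(4,369)}$.

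\emph{Converse.} Suppose $\mathcal V$ fails \eqref{id2608090150}, so some $A\in\mathcal V$ has $a,b$ with $a^2\neq a+ab$. By Proposition~\ref{prop:basis-C7}, $a^2$ is the additive maximum and multiplicative zero, and by \eqref{26082825} it equals $b^2$ and every product of three or more elements; hence $a+ab<a^2$. Applying \eqref{eq:Cbridge} with $x_1=a$, $x_2=b$, $x_3=a$, $x_4=b$ gives $a\le b+ab$, and symmetrically $b\le a+ab$. Consequently
\[
a+b\le a+ab=b+ab=a+b+ab<a^2 ,
\]
and in particular $ab<a^2$. The subalgebra $\langle a,b\rangle$ then consists of $a$, $b$, $a^2$ and the elements of
\[
I=\{ab,\ a+b,\ a+ab\},
\]
all of which lie strictly below $a^2$.

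\emph{Distinctness (the main obstacle).} I must show that $a$, $b$, $a^2$ are pairwise distinct and lie outside $I$. The device I would use is multiplying an order relation by $b$ (or $a$) and invoking $a^2=b^2=$ triple products.
\begin{itemize}
\item If $ab\le a$, multiplying by $b$ gives $a^2=ab^2\le ab$, contradicting $ab<a^2$.
\item If $b\le a$, multiplying by $b$ gives $a^2=b^2\le ab$, again a contradiction. Symmetric arguments exclude $ab\le b$ and $a\le b$.
\item If $a=a^2$, then $ab=a^3=a^2$, a contradiction; likewise $b\neq a^2$.
\end{itemize}
Every element of $I$ is $\ge ab$ or $\ge b$, so $a\notin I$, and similarly $b\notin I$.

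\emph{Congruence and identification.} I would take the equivalence $\rho$ whose only nontrivial class is $I$ and verify that it is a semiring congruence. For multiplication, every product of elements of $I$ with anything is $a^2$, except $ab$ itself as the product $a\cdot b$, which already lies in $I$. For addition, $a+I\subseteq I$ and $b+I\subseteq I$ by the displayed chain, and $I+I\subseteq I$. I then expect $\langle a,b\rangle/\rho\cong S_{(4,369)}$ via
\[
a\mapsto 3,\qquad b\mapsto 4,\qquad I\mapsto 1,\qquad a^2\mapsto 2 .
\]
This can be checked against Table~\ref{S4-369}: $3\cdot4=1$, $3+4=1$, $3+1=1$, $3\cdot3=4\cdot4=2$, and $2$ is absorbing for both operations. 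Hence $S_{(4,369)}\in\mathsf V(A)\subseteq\mathcal V$.
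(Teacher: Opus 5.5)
Your proposal is correct and follows essentially the same route as the paper's proof. Both arguments use \eqref{eq:Cbridge} to obtain $a+ab=b+ab$, work in the same six-element subalgebra $\langle a,b\rangle$, collapse the class $\{ab,a+b,a+ab\}$, and identify the quotient with $S_{(4,369)}$ by the same assignment $a\mapsto 3$, $b\mapsto 4$, $\{ab,a+b,a+ab\}\mapsto 1$, $a^2\mapsto 2$; your explicit check that the four classes are distinct (by multiplying order relations by $a$ or $b$) fills in a step the paper leaves as routine.
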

\begin{proof}
Suppose that $\mathcal V$ satisfies \eqref{id2608090150}.
Then $\mathcal V$ does not contain $S_{(4,369)}$, since $S_{(4,369)}$ fails to satisfy \eqref{id2608090150}.
Conversely, assume that $\mathcal V$ does not satisfy \eqref{id2608090150}.
Then there exists an ai-semiring $S\in \mathcal{V}$ such that
$a^2 \nleq a+ab$
for some distinct elements $a, b \in S$.
Let $\langle a, b\rangle$ denote the subalgebra of $S$ generated by $a$ and $b$.
By \eqref{eq:Cbridge}, we have $a \leq b+ab$ and $b \leq a+ab$, so $a+ab=b+ab$.
It is routine to verify that
\[
\langle a, b\rangle = \{a, b, a^2, ab, a+b, a+ab\}.
\]
Consider the equivalence relation $\rho$ on $\langle a, b\rangle$ with equivalence classes
\[
R_1=\{a\},\quad R_2=\{b\},\quad R_3=\{a^2\},\quad
R_4=\{ab,a+b,a+ab\}.
\]
It is routine to check that $\rho$ is a semiring congruence on $\langle a,b\rangle$,
and that the quotient algebra $\langle a,b\rangle/\rho$ is isomorphic to $S_{(4,369)}$
under the mapping $R_1 \mapsto 3$, $R_2 \mapsto 4$, $R_3 \mapsto 2$, and $R_4 \mapsto 1$.
Thus $\mathcal V$ contains $S_{(4,369)}$.
\end{proof}

\begin{corollary}\label{coro26090226}
Let $\mathcal V$ be a subvariety of $\mathsf{V}(ST_7)$. Then $\mathcal V$
does not contain $S_{(4,369)}$ if and only if $\mathcal V$ is a subvariety of $\mathsf{V}(SR_6)$.
\end{corollary}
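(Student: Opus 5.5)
This is a direct combination of Proposition~\ref{prop:exclude-S4369-ST7} with Corollary~\ref{prop:define-SR6-ST7}. The plan is to show that both conditions in the statement are equivalent to $\mathcal V$ satisfying the identity \eqref{id2608090150}.

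First suppose $\mathcal V$ does not contain $S_{(4,369)}$. By Proposition~\ref{prop:exclude-S4369-ST7}, $\mathcal V$ satisfies \eqref{id2608090150}. Since $\mathcal V\subseteq\mathsf{V}(ST_7)$, every member of $\mathcal V$ also satisfies the basis of $\mathsf{V}(ST_7)$ from Proposition~\ref{prop:basis-C7}. Corollary~\ref{prop:define-SR6-ST7} identifies $\mathsf{V}(SR_6)$ with the subvariety of $\mathsf{V}(ST_7)$ defined by \eqref{id2608090150}, so $\mathcal V\subseteq\mathsf{V}(SR_6)$.

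Conversely, suppose $\mathcal V\subseteq\mathsf{V}(SR_6)$. By Corollary~\ref{prop:define-SR6-ST7}, $SR_6$ satisfies \eqref{id2608090150}, and hence so does $\mathcal V$. Proposition~\ref{prop:exclude-S4369-ST7} then shows that $S_{(4,369)}\notin\mathcal V$. Alternatively, one can check directly that $S_{(4,369)}$ fails \eqref{id2608090150}: with $x=3$ and $y=4$ we get $x^2=2$ but $x+xy=3+1=1$.

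There is no real obstacle here. The only point needing care is that both cited results are stated relative to $\mathsf{V}(ST_7)$, and this is exactly why the hypothesis $\mathcal V\subseteq\mathsf{V}(ST_7)$ is required.
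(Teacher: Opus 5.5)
Your proposal is correct and follows essentially the same route as the paper: the forward direction combines Proposition~\ref{prop:exclude-S4369-ST7} with Corollary~\ref{prop:define-SR6-ST7}, and the converse uses that \eqref{id2608090150} holds in $SR_6$ but fails in $S_{(4,369)}$. Your explicit check with $x=3$, $y=4$ (giving $x^2=2\neq 1=x+xy$) is correct.
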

\begin{proof}
Assume that $\mathcal V$ does not contain $S_{(4,369)}$.
By Proposition~\ref{prop:exclude-S4369-ST7} and Corollary~\ref{prop:define-SR6-ST7},
$\mathcal{V}$ is a subvariety of $\mathsf{V}(SR_6)$.
Conversely, if $\mathcal{V}$ is a subvariety of $\mathsf{V}(SR_6)$,
then it cannot contain $S_{(4,369)}$, since \eqref{id2608090150} holds in $SR_6$ but fails in $S_{(4,369)}$.
\end{proof}

\begin{proposition}\label{prop:exclude-S48-ST7}
Let $\mathcal V$ be a subvariety of $\mathsf{V}(ST_7)$. Then $\mathcal V$
does not contain $S_c(ab)$ if and only if $\mathcal V$ satisfies
the inequality~\eqref{eq3}.
\end{proposition}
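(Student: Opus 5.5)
The argument mirrors Proposition~\ref{prop:exclude-S4369-ST7}. One direction is immediate. $S_c(ab)$ fails \eqref{eq3}: take $x=a$, $y=b$, so $xy=ab$, and $a+ab=0\neq ab$, hence $a\nleq ab$. Therefore any $\mathcal V$ satisfying \eqref{eq3} cannot contain $S_c(ab)$.

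For the converse, assume $\mathcal V$ does not satisfy \eqref{eq3}. Then there are $S\in\mathcal V$ and $a,b\in S$ with $a\nleq ab$, and we work in the subalgebra $\langle a,b\rangle$. By \eqref{eq2} and \eqref{eq1}, $a^2=b^2$ is the additive maximum and multiplicative zero of $\langle a,b\rangle$; call it $0$. Products of three or more elements also equal $0$. So every element is a sum from $\{a,b,ab,0\}$.

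Next we need some relations. We have $a\neq b$, since otherwise $ab=a^2$ is the top element and $a\le ab$. From \eqref{eq:Cbridge} with $x_1=a$, $x_2=b$ and $x_3=x_4=a$ we get $a\le b+ab+a^2=0$, which is trivial. A better instance is $x_1=a$, $x_2=b$, $x_3=a$, $x_4=b$, which gives $a\le b+ab$. By symmetry $b\le a+ab$. Hence $a+ab=b+ab=a+b+ab$; call this element $e$. Thus
\[
\langle a,b\rangle=\{a,\ b,\ ab,\ a+b,\ e,\ 0\}.
\]

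Now take the equivalence $\rho$ whose only nonsingleton class is $I=\{a+b,\ e,\ 0\}$. This is an upward-closed set under sums, since $a+b\le e\le 0$. We then check that $\rho$ is a congruence. For multiplication, $a(a+b)=a^2+ab=0$ and $ae=0$, so products of anything with $I$ land in $\{0\}\subseteq I$. For addition, a sum with an element of $I$ stays in $I$.

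The quotient should be $\{0,a,b,ab\}$ with $a+b=0$, $ab$ incomparable to $a$ and $b$, $a^2=b^2=0$ and $ab\cdot x=0$, which is exactly $S_c(ab)$. The map is $I\mapsto 0$, $\{a\}\mapsto a$, $\{b\}\mapsto b$, $\{ab\}\mapsto ab$.

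The main obstacle is showing that the four classes are distinct. The key points are:
\begin{itemize}
\item[(1)] $ab\notin I$.
\item[(2)] $a+ab\notin\{a,b,ab\}$, i.e.\ $a,b,ab$ are pairwise incomparable.
\end{itemize}
We know $a\nleq ab$. If $ab\le a$, then multiplying by $b$ gives $ab\cdot b=0\le ab$, which gives nothing. So instead use \eqref{eq:Cbridge} again: with $x_1=ab$ (as an element), $x_2=a$, $x_3=b$ and $x_4$ arbitrary, we get $ab\le a+ab+0$, which is trivial. If this is not enough, the fallback is to pass to a further quotient, or to choose $a,b$ minimally. Concretely, if $ab\le a$, then $a+ab=a$, and then $b\le a+ab=a$ gives $ab\le a^2$, which is again uninformative.

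So this comparability step, and likewise ruling out $b\le a$ (which would give $a=a+b\ge\ldots$), is where care is needed. One option is to replace $\rho$ by the congruence generated by $I$ together with the classes forced by the failure $a\nleq ab$, and to verify directly that $a$, $b$, $ab$ remain separated in the quotient because $a\nleq ab$ survives. Once distinctness holds, $S_c(ab)\in\mathsf H\mathsf S(S)\subseteq\mathcal V$, which completes the proof.
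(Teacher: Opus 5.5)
Your construction matches the paper's: the same subalgebra $\langle a,b\rangle=\{a,b,ab,a+b,a+ab,a^2\}$ (with $a+ab=b+ab$ obtained from \eqref{eq:Cbridge}), the same congruence whose only nonsingleton class is $I=\{a+b,a+ab,a^2\}$, and the same isomorphism onto $S_c(ab)$. The proposal is nevertheless incomplete. You never show that $\{a\},\{b\},\{ab\},I$ are pairwise disjoint, that is, that $a,b,ab$ are distinct and none of them lies in $I$. You identify this as the main obstacle and then leave it to fallbacks you do not carry out: choosing $a,b$ minimally, passing to a further quotient, or using a generated congruence. Without this step the partition is not well defined. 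The quotient could then have fewer than four elements, so it need not be $S_c(ab)$.

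The gap closes with the computation you discarded. In your notation $0=a^2$ is the additive \emph{maximum} of $\langle a,b\rangle$, so $0\le ab$ is not vacuous. It forces $ab=0$, whence $a\le 0=ab$, contradicting the choice of $a,b$. Every unwanted coincidence reduces to this after multiplying by $a$ or $b$ and using \eqref{eq1}:
\begin{itemize}
\item if $b\le a$, then $0=b^2\le ab$ (you multiplied by $a$ where $b$ is needed);
\item if $a\le b$, then $0=a^2\le ab$;
\item if $ab\le a$, then $0=ab^2\le ab$;
\item if $ab\le b$, then $0=a^2b\le ab$;
\item if $a=0$ or $b=0$, then $ab=0$.
\end{itemize}
In addition, $ab\notin I$, since $a$ lies below every element of $I$ and $a\nleq ab$. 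Also $b\neq ab$, since $a\le b+ab$. Together with your observations $a\neq b$ and $a\neq ab$, these facts give $a,b,ab\notin I$ and $a,b,ab$ pairwise distinct. With that in place, your congruence check and the identification of the quotient with $S_c(ab)$ go through.
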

\begin{proof}
Suppose that $\mathcal V$ satisfies the inequality~\eqref{eq3}.
Then $\mathcal V$ does not contain $S_c(ab)$, since $S_c(ab)$ does not satisfy \eqref{eq3}.
Conversely, assume that $\mathcal V$ does not satisfy the inequality~\eqref{eq3}.
Then there exists an ai-semiring $S\in \mathcal{V}$ such that $a\nleq ab$ for some distinct elements $a,b\in S$.
Let $\langle a, b\rangle$ denote the subalgebra of $S$ generated by $a$ and $b$.
By \eqref{eq:Cbridge},
\[
\langle a, b\rangle=\{a, b, a^2, ab,a+b, a+ab\}.
\]
Consider the equivalence relation $\rho$ on $\langle a,b\rangle$ with equivalence classes
\[
R_1=\{a\}, \quad R_2=\{b\}, \quad R_3=\{ab\},\quad R_4=\{a^2,a+b,a+ab\}.
\]
Now it is easy to verify that $\rho$ is a semiring
congruence on $\langle a,b\rangle$, and that
the quotient algebra $\langle a,b\rangle/\rho$ is isomorphic to $S_c(ab)$
under the mapping $R_1\mapsto a$, $R_2\mapsto b$, $R_3\mapsto ab$, and $R_4\mapsto 0$.
Hence $\mathcal V$ contains $S_c(ab)$.
\end{proof}

\begin{corollary}\label{coro26090225}
Let $\mathcal V$ be a subvariety of $\mathsf{V}(ST_7)$. Then $\mathcal V$
does not contain $S_c(ab)$ if and only if $\mathcal V$ is a subvariety of $\mathsf{V}(TR_6)$.
\end{corollary}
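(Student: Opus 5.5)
This is the exact analogue of Corollary~\ref{coro26090226}, and I would prove it the same way, by combining Proposition~\ref{prop:exclude-S48-ST7} with Corollary~\ref{prop:define-TR6-ST7}.

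For the forward direction, assume $\mathcal V\subseteq\mathsf{V}(ST_7)$ does not contain $S_c(ab)$. By Proposition~\ref{prop:exclude-S48-ST7}, $\mathcal V$ satisfies the inequality~\eqref{eq3}. Every member of $\mathcal V$ therefore lies in the subvariety of $\mathsf{V}(ST_7)$ defined by~\eqref{eq3}. By Corollary~\ref{prop:define-TR6-ST7}, that subvariety is exactly $\mathsf{V}(TR_6)$, so $\mathcal V\subseteq\mathsf{V}(TR_6)$.

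For the converse, suppose $\mathcal V\subseteq\mathsf{V}(TR_6)$. By Lemma~\ref{R6identities}, $TR_6$ satisfies~\eqref{eq3}, so every member of $\mathcal V$ satisfies~\eqref{eq3}. However, $S_c(ab)$ fails~\eqref{eq3}: in Table~\ref{tab:S48} we have $a\cdot a=0$ and $a+0=0\neq a$, so $a\nleq a\cdot a$. Hence $S_c(ab)\notin\mathcal V$. Alternatively, one can simply quote the easy half of Proposition~\ref{prop:exclude-S48-ST7}.

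There is no real obstacle here. The only point needing care is the last check, namely confirming from the Cayley table that~\eqref{eq3} really fails in $S_c(ab)$. The witness $x=y=a$ does this, since $a\nleq a^2=0$ where $0$ is the additive maximum.
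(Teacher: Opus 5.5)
Your argument has the same structure as the paper's proof. The forward direction goes through Proposition~\ref{prop:exclude-S48-ST7} and Corollary~\ref{prop:define-TR6-ST7}. The converse uses the fact that \eqref{eq3} holds in $TR_6$ but fails in $S_c(ab)$.

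The one step you single out as needing care is done incorrectly. The witness $x=y=a$ does not refute \eqref{eq3}. Recall that $u\leq v$ means $u+v=v$. Since $a\cdot a=0$ and $0$ is the additive maximum of $S_c(ab)$, we get $a+a^2=a+0=0=a^2$, so $a\leq a^2$ does hold. Your computation $a+0=0\neq a$ only shows $a^2\nleq a$, which is the wrong direction. Your closing sentence is also self-contradictory: every element lies below the additive maximum, so $a\nleq 0$ is impossible.

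More generally, no choice with $xy=0$ can work. The correct witness is $x=a$, $y=b$. Then $xy=ab$ and $a+ab=0\neq ab$ by Table~\ref{tab:S48}, so $a\nleq ab$. With this correction, or by quoting the easy half of Proposition~\ref{prop:exclude-S48-ST7} as you also suggest, the proof is complete.
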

\begin{proof}
Suppose that $\mathcal V$ does not contain $S_c(ab)$.
By Proposition~\ref{prop:exclude-S48-ST7} and Corollary~\ref{prop:define-TR6-ST7},
$\mathcal{V}$ is a subvariety of $\mathsf{V}(TR_6)$.
Conversely, if $\mathcal{V}$ is a subvariety of $\mathsf{V}(TR_6)$,
then it cannot contain $S_c(ab)$, since \eqref{eq3} holds in $TR_6$ but fails in $S_c(ab)$.
\end{proof}

\begin{proposition}\label{prop:exclude-TR6-ST7}
Let $\mathcal V$ be a subvariety of $\mathsf{V}(ST_7)$. Then $\mathcal V$
does not contain $TR_6$ if and only if $\mathcal V$ satisfies
the inequality~\eqref{eq:Cpath}.
\end{proposition}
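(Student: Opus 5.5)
We prove the two directions separately. The forward direction ($\mathcal V$ satisfies \eqref{eq:Cpath} $\Rightarrow$ $TR_6\notin\mathcal V$) only needs one evaluation in $TR_6$ violating \eqref{eq:Cpath}. The converse follows the pattern of Proposition~\ref{subvariety-lattice}: from a failure of \eqref{eq:Cpath} in some $S\in\mathcal V$, exhibit $TR_6$ as a quotient of a finitely generated subalgebra of $S$.

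\emph{Forward direction.} Assume $\mathcal V$ satisfies \eqref{eq:Cpath}; it suffices to show that $TR_6$ does not. Take $x_1\mapsto 2$, $x_2\mapsto 6$, $x_3\mapsto 5$, $x_4\mapsto 4$. Then $x_1x_2\mapsto 2\cdot 6=3$, $x_2x_3\mapsto 6\cdot5=3$ and $x_3x_4\mapsto 5\cdot 4=3$, so the right-hand side is $2+3=3$. The left-hand side is $2\cdot 4=1$, and $1\nleq 3$. Hence $TR_6\notin\mathcal V$.

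\emph{Converse.} Suppose $\mathcal V$ does not satisfy \eqref{eq:Cpath}, so some $S\in\mathcal V$ has elements $a,b,c,d$ with
\[
ad\nleq a+ab+bc+cd .
\]
Let $\langle a,b,c,d\rangle$ be the subalgebra they generate. By Proposition~\ref{prop:basis-C7}, squares and products of three or more elements all equal the top element $a^2$, which is also a multiplicative zero. So every element is a finite sum from $\{a,b,c,d,ab,ac,ad,bc,bd,cd,a^2\}$, and we have a finite, explicitly listable algebra.

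Next we record the consequences of the failure. From $ad\nleq a+ab+bc+cd$ we get $ad\nleq ab+bc+cd$, $ad\neq a^2$, and that $ad$ is not below any sum of the listed generators that lies below $a+ab+bc+cd$. We also use \eqref{eq:Cbridge}, in the form $x_1\le x_2+x_2x_3+x_1x_4$, and the $\sigma_n$'s: for instance $ab+bc+ca$ equals the top by $\sigma_1$, so the odd triangles $ab,bc,ca$ and $ab,bd,da$ collapse to the top.

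Then we define an equivalence $\tau$ on $\langle a,b,c,d\rangle$ modelled on the evaluation
\[
a\mapsto 2,\quad b\mapsto 6,\quad c\mapsto 5,\quad d\mapsto 4
\]
into $TR_6$. The class $R_3$ consists of the sums lying below $a+ab+bc+cd$ together with $ab,bc,cd$ and their sums with single letters. The class $R_1$ consists of all elements $\ge ad$ or $\ge$ one of the triangle sums, together with $a^2$ and the products $ac,bd$, which are sent to $1$. The singleton classes are $\{c\}$ and $\{b\}$, plus small classes for $a$ and $d$, analogous to $R_2,R_4$ in Proposition~\ref{subvariety-lattice}. The clean way to organise this is to let $R_1$ be the set of elements $s$ with $s\nleq a+ab+bc+cd$. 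This set is an up-set and is closed under multiplication by anything, since products are $\ge$ factors only up to \eqref{eq:Cbridge}; this has to be checked. Its complement is the down-set of $a+ab+bc+cd$, which is then partitioned according to the $TR_6$-value of the evaluation above.

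\emph{Main obstacle.} The hard step is showing that $\tau$ is a well-defined congruence. We must rule out that an element of the down-set of $a+ab+bc+cd$ is equal in $S$ to an element assigned a different $TR_6$-value, and check compatibility with $+$ and $\cdot$. We would first reduce this to the single fact $ad\nleq a+ab+bc+cd$ together with the elementary inequalities derived from \eqref{eq2}, \eqref{eq1}, \eqref{eq:Cbridge} and $\sigma_1$. Once $\tau$ is a congruence, mapping each class to the corresponding element gives $\langle a,b,c,d\rangle/\tau\cong TR_6$, so $TR_6\in\mathcal V$.
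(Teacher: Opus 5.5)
Your forward direction is correct. Your converse takes a genuinely different route from the paper's, and the route is viable, but as written it does not prove the converse.

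\textbf{How the routes differ.} The paper argues purely equationally. A nontrivial $\mathcal V$ omitting $TR_6$ satisfies some nontrivial $\mathbf q\preceq\mathbf u$ that fails in $TR_6$. By Proposition~\ref{wordprop}, $\mathbf u=L_1(\mathbf u)+L_2(\mathbf u)$ is linear with bipartite $\mathbb{G}_{\mathbf u}$, and condition (iv) fails. Then, in four cases on $\mathbf q$, an explicit substitution together with \eqref{eq2}, \eqref{eq1} and \eqref{eq:Cbridge} turns $\mathbf q\preceq\mathbf u$ into \eqref{eq:Cpath}. Your semantic route bypasses Proposition~\ref{wordprop}. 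It would also give a sharper, local conclusion: $TR_6$ is a homomorphic image of the subalgebra generated by any quadruple witnessing the failure of \eqref{eq:Cpath}, with no products needed.

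\textbf{The gap.} The only nontrivial step of your route is that $\tau$ is well defined and a congruence, and this is only announced. Two smaller points also need fixing. Your first description of $R_3$ as ``the sums lying below $w=a+ab+bc+cd$'' is wrong, since $a,b,c,d,a+c,b+d\le w$ by \eqref{eq:Cbridge}; only your ``clean'' organisation is right. The multiplicative closure of the complement of the down-set of $w$ is immediate: $st\le w$ gives $s\le a+ab+st\le w$ by \eqref{eq:Cbridge}.

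\textbf{How to close it.} You do not need to check compatibility class by class. Let $\pi$ and $e$ evaluate four-variable terms at $(a,b,c,d)$ in $S$ and at $(2,6,5,4)$ in $TR_6$. It suffices to show $\ker\pi\subseteq\ker e$. Then $e$ induces a homomorphism $h\colon\langle a,b,c,d\rangle\to TR_6$. It is onto, since $e(ab)=3$ and $e(ad)=1$. Its kernel is your $\tau$, because $h(s)\le h(w)=3$ whenever $s\le w$.

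Suppose $\pi(\mathbf s)=\pi(\mathbf t)$ but $e(\mathbf s)\nleq e(\mathbf t)$. Then some word $\mathbf q$ of $\mathbf s$ has $\pi(\mathbf q)\le\pi(\mathbf t)$ and $e(\mathbf q)\nleq e(\mathbf t)$. So you must show that every such pair forces $ad\le w$. Here $e(\mathbf t)\neq 1$, so $\mathbf t$ is a sum from $\{a,b,c,d,ab,bc,cd\}$, and hence $\mathbf t\le a+b+c+d+ab+bc+cd=w$. There are three cases.

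\begin{enumerate}[$(1)$]
\item $e(\mathbf q)=1$. Then $\mathbf q\in\{ad,ac,bd\}$ or $\pi(\mathbf q)=a^2$. From $\mathbf q\le w$ you get $ad\le w$ directly, or via $\sigma_1$ applied to $ab+bc+ac$ or $bc+cd+bd$, or because $w=a^2$.
\item $\mathbf q\in\{ab,bc,cd\}$. Then $\mathbf t$ is a sum from $\{a,c\}$ or from $\{b,d\}$. Multiplying by $b$, respectively $c$, gives $a^2\le w$.
\item $\mathbf q$ is a letter. There are fourteen pairs. Each is settled by multiplying by $\mathbf q$ itself, which gives $a^2\le\mathbf q\mathbf t\le w$ (e.g.\ $b\le a+c\Rightarrow a^2\le ab+bc$). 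Otherwise it is settled by multiplying by one other letter: $a\le c\Rightarrow ad\le cd$; $d\le b\Rightarrow ad\le ab$; $a\le d\Rightarrow ac\le cd$; $d\le a+c\Rightarrow bd\le ab+bc$; and similarly for the rest.
\end{enumerate}

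With this check written out, your argument is a complete and independent proof.
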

\begin{proof}
The proof is substantially more involved than those of the preceding two propositions.
Assume that $\mathcal V$ satisfies the inequality~\eqref{eq:Cpath}.
Then $\mathcal V$ does not contain $TR_6$, since $TR_6$ fails to satisfy \eqref{eq:Cpath}.

For the converse, suppose that $\mathcal{V}$ does not contain $TR_6$.
If $\mathcal{V}$ is the trivial variety, then $\mathcal{V}$ does not contain $TR_6$.
Now assume that $\mathcal{V}$ is nontrivial. Then
there is a nontrivial inequality
$\bq\preceq\bu$ that holds in $\mathcal{V}$ but fails in $TR_6$, where
$\bu=\bu_1+\cdots+\bu_n$ with $\bu_i,\bq\in X_c^+$ for all $1\leq i\leq n$.
One can obtain that $L_2(\bu)$ is nonempty.
By Proposition~\ref{wordprop}, none of the following four conditions holds:
\begin{enumerate}[$(\rm i)$]
\item $\ell(\mathbf{u}_i)\geq3$ for some $\mathbf{u}_i\in\mathbf{u}$;
\item $\mathbf{u}$ contains a non-linear word;
\item the graph $\mathbb{G}_{\mathbf{u}}$ contains an odd cycle;
\item $\ell(\mathbf{q})=1$ and $c(\mathbf{q})\subseteq c(\mathbf{u})$.
\end{enumerate}
Hence
$\bu=L_1(\bu)+L_2(\bu)$, every word in $L_2(\bu)$ is linear,
$\mathbb{G}_{\bu}$ is bipartite, and either $\ell(\mathbf{q})\neq 1$ or $c(\mathbf{q})\nsubseteq c(\mathbf{u})$.
Let $(A,B)$ be a bipartition of $\mathbb{G}_{\bu}$. Then $A\cup B=c(L_2(\bu))$.
Consider the following four cases.

{\bf Case 1.} $\bq=z$ for some $z\in X$. Then $z\notin c(\bu)$.
Let us define a substitution $\varphi \colon P_f(X_c^+)\to P_f(X_c^+)$ by
\[
\varphi(t)=
\begin{cases}
x_1x_4,&t=z,\\
x_2,&t\in A,\\
x_3,&t\in B,\\
x_1,&\text{otherwise}.
\end{cases}
\]
Then $\varphi(\bq)=x_1x_4$, $\varphi(L_2(\bu))=x_2x_3$, and $\varphi(L_1(\bu))$ is an additive subterm of $x_1+x_2+x_3$.
Note that
\[
\varphi(\bu)=\varphi(L_1(\bu)+L_2(\bu))=\varphi(L_1(\bu))+\varphi(L_2(\bu)).
\]
It follows that $\varphi(\bu)$ is an additive subterm of $x_1+x_2+x_3+x_1x_2+x_2x_3+x_3x_4$.
Now we have
\[
\begin{aligned}
x_1x_4=\varphi(\bq)
&\preceq \varphi(\bu) \\
&\preceq x_1+x_2+x_3+x_1x_2+x_2x_3+x_3x_4 \\
&\overset{\eqref{eq:Cbridge}}{\approx} x_1+x_1x_2+x_2x_3+x_3x_4.
\end{aligned}
\]
This derives the inequality \eqref{eq:Cpath}.
Since $\varphi(\bq) \preceq \varphi(\bu)$ and \eqref{eq:Cbridge} hold in $\mathcal V$,
we obtain that $\mathcal{V}$ satisfies \eqref{eq:Cpath}.

{\bf Case 2.} $\bq=xy$ for some distinct variables $x,y\in X$, where
either $x\in A$, $y\in B$ or $x\in B$, $y\in A$.
Without loss of generality, assume that $x\in A$ and $y\in B$.
Since the inequality $\bq\preceq\bu$ is nontrivial, we have
$xy\notin L_2(\bu)$. Define a substitution
$\varphi:P_f(X_c^+)\to P_f(X_c^+)$ by
\[
\varphi(t)=
\begin{cases}
	x_1,&t=x,\\
	x_4,&t=y,\\
	x_3,&t\in A\setminus\{x\},\\
	x_2,&t\in B\setminus\{y\},\\
	x_1,&\text{otherwise}.
\end{cases}
\]
Then $\varphi(\bq)=x_1x_4$, $\varphi(L_1(\bu))$ is an additive subterm of
$x_1+x_2+x_3+x_4$,
and $\varphi(L_2(\bu))$ is an additive subterm of $x_1x_2+x_2x_3+x_3x_4$.
Note that
\[
\varphi(\bu)
=\varphi(L_1(\bu)+L_2(\bu))
=\varphi(L_1(\bu))+\varphi(L_2(\bu)).
\]
It follows that $\varphi(\bu)$ is an additive subterm of
\[
x_1+x_2+x_3+x_4+x_1x_2+x_2x_3+x_3x_4.
\]
The remainder of the argument is identical to Case 1 and is therefore omitted.

{\bf Case 3.} $\bq$ satisfies one of the following two conditions:
\begin{itemize}
\item $\bq=xy$ for some distinct variables $x,y\in X$,
where $x,y\in A$, or $x,y\in B$, or $x,y \notin A\cup B$,
or one of $x,y$ does not belong to $A\cup B$, the other belongs to $A$.

\item $\bq$ is either a square of a variable or a word of length at least three.
\end{itemize}
Define a substitution $\varphi\colon P_f(X_c^+)\to P_f(X_c^+)$ by
\[
\varphi(t)=
\begin{cases}
	x_3,&t\in B,\\
	x_2,&\text{otherwise}.
\end{cases}
\]
Then $\varphi(L_1(\bu))$ is an additive subterm of $x_2+x_3$ and $\varphi(L_2(\bu))=x_2x_3$.
Since
\[
\varphi(\bu)
=\varphi(L_1(\bu)+L_2(\bu))
=\varphi(L_1(\bu))+\varphi(L_2(\bu)),
\]
it follows that $\varphi(\bu)$ is an additive subterm of
\[
x_1+x_2+x_3+x_1x_2+x_2x_3+x_3x_4.
\]
Moreover, we have that $\varphi(\bq)$ is either a square of a variable or a word of length at least three.
Now we have
\[
\begin{aligned}
x_1x_4
&\overset{\eqref{eq2}}{\preceq} x^2
\overset{\eqref{eq1}}{\approx} \varphi(\bq)
\preceq \varphi(\bu) \\
&\preceq x_1+x_2+x_3+x_1x_2+x_2x_3+x_3x_4 \\
&\overset{\eqref{eq:Cbridge}}{\approx} x_1+x_1x_2+x_2x_3+x_3x_4.
\end{aligned}
\]
Since \eqref{eq1}, \eqref{eq:Cbridge}, and $\varphi(\bq) \preceq \varphi(\bu)$ hold in $\mathcal V$,
we obtain that $\mathcal{V}$ satisfies \eqref{eq:Cpath}.

{\bf Case 4.}
$\bq=xy$ for some distinct variables $x,y\in X$,
where one of $x,y$ does not belong to $A\cup B$, the other belongs to $B$.
Without loss of generality, we assume that $x\notin A\cup B$ and $y \in B$.
Define a substitution $\varphi\colon P_f(X_c^+)\to P_f(X_c^+)$ by
\[
\varphi(t)=
\begin{cases}
	x_1,&t=x,\\
	x_4,&t=y,\\
	x_3,&t\in A,\\
	x_2,&t\in B\setminus\{y\},\\
	x_1,&\text{otherwise}.
\end{cases}
\]
Then $\varphi(\bq)=x_1x_4$,
$\varphi(L_1(\bu))$ is an additive subterm of
$x_1+x_2+x_3+x_4$,
and $\varphi(L_2(\bu))$ is an additive subterm of $x_1x_2+x_2x_3+x_3x_4$.
Note that
\[
\varphi(\bu)
=\varphi(L_1(\bu)+L_2(\bu))
=\varphi(L_1(\bu))+\varphi(L_2(\bu)).
\]
It follows that $\varphi(\bu)$ is an additive subterm of
\[
x_1+x_2+x_3+x_4+x_1x_2+x_2x_3+x_3x_4.
\]
The remainder of the argument is identical to Case 1 and is therefore omitted.

This completes the proof.
\end{proof}

\begin{corollary}\label{coro26090270}
Let $\mathcal V$ be a subvariety of $\mathsf{V}(ST_7)$. Then $\mathcal V$
does not contain $TR_6$ if and only if $\mathcal V$ is a subvariety of $\mathsf{V}(SR_7)$.
\end{corollary}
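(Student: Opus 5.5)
This corollary is the exact analogue of Corollaries~\ref{coro26090226} and~\ref{coro26090225}. The proof will combine Proposition~\ref{prop:exclude-TR6-ST7} with Corollary~\ref{prop:define-N7-ST7}, and argue each direction separately.

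For the forward direction, assume that $\mathcal V\subseteq\mathsf{V}(ST_7)$ does not contain $TR_6$. Proposition~\ref{prop:exclude-TR6-ST7} then gives that $\mathcal V$ satisfies the inequality~\eqref{eq:Cpath}. Since $\mathcal V$ is a subvariety of $\mathsf{V}(ST_7)$, it also satisfies every identity valid in $\mathsf{V}(ST_7)$. Hence $\mathcal V$ lies in the subvariety of $\mathsf{V}(ST_7)$ defined by~\eqref{eq:Cpath}. By Corollary~\ref{prop:define-N7-ST7}, that subvariety is exactly $\mathsf{V}(SR_7)$, so $\mathcal V\subseteq\mathsf{V}(SR_7)$.

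For the converse, assume $\mathcal V\subseteq\mathsf{V}(SR_7)$. By Proposition~\ref{prop:basis-N7}, $SR_7$ satisfies~\eqref{eq:Cpath}, and therefore so does every member of $\mathcal V$. It then suffices to show that $TR_6$ fails~\eqref{eq:Cpath}; this was already asserted in the proof of Proposition~\ref{prop:exclude-TR6-ST7}, but I would confirm it with an explicit assignment. Proposition~\ref{wordprop} settles it. In~\eqref{eq:Cpath} the right-hand side $x_1+x_1x_2+x_2x_3+x_3x_4$ consists of linear words of length at most two, and its graph is a path, so it has no odd cycle. The left-hand side $x_1x_4$ has length $2$. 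Thus none of conditions (i)--(iv) of Proposition~\ref{wordprop} holds, and the inequality fails in $TR_6$. Concretely, one can take $x_1\mapsto 2$, $x_2\mapsto 6$, $x_3\mapsto 5$, $x_4\mapsto 4$. The right-hand side evaluates to $2+3+3+3=3$, while $x_1x_4\mapsto 2\cdot4=1\not\leq 3$. Therefore $TR_6\notin\mathcal V$.

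Neither direction presents a real obstacle, since all the substantive work was done in Proposition~\ref{prop:exclude-TR6-ST7}. The only point needing care is the explicit check that $TR_6$ violates~\eqref{eq:Cpath}, which the assignment above provides.
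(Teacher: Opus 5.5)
Your proposal is correct and follows the paper's proof: the forward direction combines Proposition~\ref{prop:exclude-TR6-ST7} with Corollary~\ref{prop:define-N7-ST7}, and the converse uses that \eqref{eq:Cpath} holds in $SR_7$ but fails in $TR_6$. The paper only asserts that failure; your check via Proposition~\ref{wordprop}, and the explicit assignment $x_1\mapsto 2$, $x_2\mapsto 6$, $x_3\mapsto 5$, $x_4\mapsto 4$ (giving $1\nleq 3$), are both correct and fill it in.
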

\begin{proof}
Suppose that $\mathcal V$ does not contain $TR_6$.
By Proposition~\ref{prop:exclude-TR6-ST7} and Corollary~\ref{prop:define-N7-ST7},
$\mathcal{V}$ is a subvariety of $\mathsf{V}(SR_7)$.
Conversely, if $\mathcal{V}$ is a subvariety of $\mathsf{V}(SR_7)$,
then it cannot contain $TR_6$, since \eqref{eq:Cpath} holds in $SR_7$ but fails in $TR_6$.
\end{proof}

In the following we determine the subvariety lattice $\mathcal{L}(\mathsf{V}(S_{(5,2269)}))$ of the variety $\mathsf{V}(S_{(5,2269)})$.
\begin{proposition}\label{pro06090152259}
The lattice $\mathcal{L}(\mathsf{V}(S_{(5,2269)}))$ consists of five varieties:
$\mathsf{V}(S_{(5,2269)})$, $\mathsf{V}(S_c(ab))$, $\mathsf{V}(S_{(4, 369)})$,
$\mathsf{V}(T_2)$, and $\mathbf{T}$.
\end{proposition}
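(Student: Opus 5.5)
Let $\mathcal V$ be a subvariety of $\mathsf{V}(S_{(5,2269)})$; the plan is to sort $\mathcal V$ according to whether it contains $S_{(4,369)}$ and whether it contains $S_c(ab)$, using the exclusion criteria of Section~6. First, all five listed varieties really are subvarieties. By Remark~\ref{remark26083101}, $\mathsf{V}(S_{(5,2269)})=\mathsf{V}(S_c(ab),S_{(4,369)})$, so it contains $\mathsf{V}(S_c(ab))$ and $\mathsf{V}(S_{(4,369)})$. Each of these contains $\mathsf{V}(T_2)$ by Lemma~\ref{lem26090180} and Proposition~\ref{subvariety-lattice}. The five are pairwise distinct: $S_{(4,369)}$ fails \eqref{id2608090150}, $S_c(ab)$ fails \eqref{eq3}, and $T_2$ satisfies both. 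Finally, $S_{(5,2269)}$ lies in $\mathsf{V}(ST_7)$, since by Corollary~\ref{prop:define-S5-ST7} it is defined within $\mathsf{V}(ST_7)$ by \eqref{eq31}. Hence every subvariety $\mathcal V$ is a subvariety of $\mathsf{V}(ST_7)$, and Corollaries~\ref{coro26090226} and~\ref{coro26090225} apply to it.

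\textbf{Case A: $\mathcal V$ contains neither $S_{(4,369)}$ nor $S_c(ab)$.} By Corollaries~\ref{coro26090226} and~\ref{coro26090225}, $\mathcal V\subseteq \mathsf{V}(SR_6)\cap\mathsf{V}(TR_6)$. By Corollary~\ref{coro26090215} this intersection is $\mathsf{V}(T_2)$. So $\mathcal V$ is $\mathbf T$ or $\mathsf{V}(T_2)$, because $\mathsf{V}(T_2)$ has only these two subvarieties.

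\textbf{Case B: $\mathcal V$ contains $S_{(4,369)}$ but not $S_c(ab)$.} By Corollary~\ref{coro26090225}, $\mathcal V\subseteq\mathsf{V}(TR_6)\cap\mathsf{V}(S_{(5,2269)})$. By Corollary~\ref{coro26090130} this intersection is $\mathsf{V}(S_{(4,369)})$, so $\mathcal V=\mathsf{V}(S_{(4,369)})$.

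\textbf{Case C: $\mathcal V$ contains $S_c(ab)$ but not $S_{(4,369)}$.} By Corollary~\ref{coro26090226}, $\mathcal V\subseteq\mathsf{V}(SR_6)\cap\mathsf{V}(S_{(5,2269)})$. By Corollary~\ref{coro26090138} this is $\mathsf{V}(S_c(ab))$, so $\mathcal V=\mathsf{V}(S_c(ab))$.

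\textbf{Case D: $\mathcal V$ contains both $S_{(4,369)}$ and $S_c(ab)$.} Then $\mathcal V\supseteq \mathsf{V}(S_c(ab),S_{(4,369)})=\mathsf{V}(S_{(5,2269)})$ by Remark~\ref{remark26083101}, so $\mathcal V=\mathsf{V}(S_{(5,2269)})$.

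These four cases are exhaustive, so exactly the five listed varieties occur. No genuine obstacle should arise, since the heavy lifting sits in the earlier exclusion propositions and intersection corollaries. The one point to check carefully is that those exclusion criteria, stated for subvarieties of $\mathsf{V}(ST_7)$, may legitimately be applied to $\mathcal V$. This holds because $S_{(5,2269)}\in\mathsf{V}(ST_7)$ by Corollary~\ref{prop:define-S5-ST7}; alternatively, $S_c(ab)\in\mathsf{V}(SR_6)$ and $S_{(4,369)}\in\mathsf{V}(TR_6)$ together with Remark~\ref{remark26083101} give the same conclusion.
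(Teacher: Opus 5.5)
Your proof is correct and takes essentially the paper's route: you split by which of the generators $S_c(ab)$ and $S_{(4,369)}$ the subvariety contains, then apply Corollaries~\ref{coro26090225} and~\ref{coro26090226} together with the intersection results of Corollaries~\ref{coro26090130} and~\ref{coro26090138}. The differences are minor: the paper uses two overlapping cases and finishes by quoting the subvariety lattices of $\mathsf{V}(S_{(4,369)})$ and $\mathsf{V}(S_c(ab))$, whereas you treat the ``contains neither'' case separately via Corollary~\ref{coro26090215}, and you make explicit two points the paper leaves implicit, namely that $S_{(5,2269)}\in\mathsf{V}(ST_7)$ and that the five listed varieties are distinct.
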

\begin{proof}
By Remark~\ref{remark26083101}, $\mathsf{V}(S_{(5,2269)})=\mathsf{V}(S_c(ab), S_{(4,369)})$.
Now let $\mathcal{V}$ be an arbitrary proper subvariety of $\mathsf{V}(S_{(5,2269)})$.
Then $\mathcal{V}$ cannot simultaneously contain $S_c(ab)$ and $S_{(4,369)}$.

If $\mathcal{V}$ does not contain $S_c(ab)$, then by Corollary~\ref{coro26090225},
$\mathcal{V}$ is a subvariety of $\mathsf{V}(TR_6)$.
Hence $\mathcal{V}$ is a subvariety of the variety $\mathsf{V}(S_{(5,2269)})\cap\mathsf{V}(TR_6)$.
Now it follows from Corollary~\ref{coro26090130} that $\mathcal{V}$ is a subvariety of $\mathsf{V}(S_{(4, 369)})$.
By Proposition~\ref{subvariety-lattice}, $\mathcal{V}$ coincides with $\mathsf{V}(S_{(4, 369)})$,
$\mathsf{V}(T_2)$ or $\mathbf{T}$.

If $\mathcal{V}$ does not contain $S_{(4,369)}$, then by Corollary~\ref{coro26090226},
$\mathcal{V}$ is a subvariety of $\mathsf{V}(SR_6)$.
Thus $\mathcal{V}$ is a subvariety of the variety $\mathsf{V}(S_{(5,2269)})\cap\mathsf{V}(SR_6)$.
By Corollary~\ref{coro26090138}, $\mathcal{V}$ is a subvariety of $\mathsf{V}(S_c(ab))$.
By Lemma~\ref{lem26090180}, $\mathcal{V}$ coincides with $\mathsf{V}(S_c(ab))$, $\mathsf{V}(T_2)$ or $\mathbf{T}$.
This completes the proof.
\end{proof}

To determine the lattice $\mathcal{L}(\mathsf{V}(ST_7))$ of subvarieties of $\mathsf{V}(ST_7)$,
it is natural to consider the following restriction mapping
\begin{equation}\label{mp26090188}
\pi \colon \mathcal{L}(\mathsf{V}(ST_7))\to \mathcal{L}(\mathsf{V}(S_{(5,2269)})), \quad \mathcal{V} \mapsto  \mathcal{V}\cap\mathsf{V}(S_{(5,2269)}).
\end{equation}
This mapping is surjective.
By Proposition~\ref{pro06090152259},
the preimages of $\pi$ give the following partition of $\mathcal{L}(\mathsf{V}(ST_7))$:
$\pi^{-1}(\mathbf{T})$, $\pi^{-1}(\mathsf{V}(T_2))$, $\pi^{-1}(\mathsf{V}(S_{(4, 369)}))$,
$\pi^{-1}(\mathsf{V}(S_c(ab)))$, and $\pi^{-1}(\mathsf{V}(S_{(5,2269)}))$.

The next proposition describes each preimage explicitly.
\begin{proposition}\label{prop:ST7-fibres}
Let $\pi$ be the mapping defined in \eqref{mp26090188}. Then
\begin{enumerate}[$(1)$]
\item $\pi^{-1}(\mathbf T)=\{\mathbf T\}$.
\item $\pi^{-1}(\mathsf{V}(T_2))=\{\mathsf{V}(T_2)\}$.
\item
$\pi^{-1}(\mathsf{V}(S_{(4, 369)}))
 =\{\mathsf{V}(S_{(4, 369)}),\mathsf{V}(TR_6)\}$.
\item
$\pi^{-1}(\mathsf{V}(S_c(ab)))
 =\{\mathsf{V}(S_c(ab)),\mathsf{V}(SR_6)\}$.
\item
$\pi^{-1}(\mathsf{V}(S_{(5,2269)}))
 =\{\mathsf{V}(S_{(5,2269)}),\mathsf{V}(SR_7),
 \mathsf{V}(ST_7)\}$.
\end{enumerate}
\end{proposition}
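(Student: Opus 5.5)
The plan is to compute each fibre from the three exclusion criteria. These are Corollaries~\ref{coro26090226}, \ref{coro26090225}, \ref{coro26090270}. A subvariety $\mathcal V$ of $\mathsf{V}(ST_7)$ omits $S_{(4,369)}$ iff $\mathcal V\subseteq\mathsf{V}(SR_6)$; it omits $S_c(ab)$ iff $\mathcal V\subseteq\mathsf{V}(TR_6)$; it omits $TR_6$ iff $\mathcal V\subseteq\mathsf{V}(SR_7)$. We also use the known lattices of $\mathsf{V}(TR_6)$ (Proposition~\ref{subvariety-lattice}), of $\mathsf{V}(SR_6)$ (Lemma~\ref{lem26090180}) and of $\mathsf{V}(S_{(5,2269)})$ (Proposition~\ref{pro06090152259}). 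One observation is used throughout: $\pi(\mathcal V)$ contains $S_{(4,369)}$ (resp.\ $S_c(ab)$) iff $\mathcal V$ does, since both lie in $\mathsf{V}(S_{(5,2269)})$.

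\textbf{Parts (1)--(2).} Take $\mathcal V\in\pi^{-1}(\mathbf T)\cup\pi^{-1}(\mathsf V(T_2))$. By the observation, $\mathcal V$ contains neither $S_{(4,369)}$ nor $S_c(ab)$. Hence $\mathcal V\subseteq\mathsf V(SR_6)\cap\mathsf V(TR_6)=\mathsf V(T_2)$ by Corollary~\ref{coro26090215}. So $\mathcal V\in\{\mathbf T,\mathsf V(T_2)\}$, and $\pi$ fixes both of these.

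\textbf{Part (3).} Take $\mathcal V$ with $\pi(\mathcal V)=\mathsf V(S_{(4,369)})$. Then $\mathcal V$ omits $S_c(ab)$, so $\mathcal V\subseteq\mathsf V(TR_6)$ and $\mathcal V$ lies in the chain of Proposition~\ref{subvariety-lattice}. It contains $S_{(4,369)}$, so $\mathcal V\in\{\mathsf V(S_{(4,369)}),\mathsf V(TR_6)\}$. Conversely, $\pi(\mathsf V(TR_6))=\mathsf V(S_{(4,369)})$ by Corollary~\ref{coro26090130}.

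\textbf{Part (4)} is symmetric. Here $\mathcal V$ omits $S_{(4,369)}$, so $\mathcal V\subseteq\mathsf V(SR_6)$ and $\mathcal V$ contains $S_c(ab)$. Lemma~\ref{lem26090180} then gives $\mathcal V\in\{\mathsf V(S_c(ab)),\mathsf V(SR_6)\}$, and Corollary~\ref{coro26090138} gives $\pi(\mathsf V(SR_6))=\mathsf V(S_c(ab))$.

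\textbf{Part (5).} First check membership. $\pi(\mathsf V(ST_7))=\mathsf V(S_{(5,2269)})$ trivially. For $SR_7$, Remark~\ref{remark26083102} shows $\mathsf V(SR_7)$ contains $S_c(ab)$ (via $SR_6$) and $S_{(4,369)}$. So it contains $\mathsf V(S_{(5,2269)})=\mathsf V(S_c(ab),S_{(4,369)})$, and $\pi(\mathsf V(SR_7))=\mathsf V(S_{(5,2269)})$. These three varieties are distinct: $\mathsf V(SR_7)$ is nonfinitely based (Corollary~\ref{cor:N7-nfb}) while $\mathsf V(S_{(5,2269)})$ is finitely based, and $TR_6\notin\mathsf V(SR_7)$ because \eqref{eq:Cpath} fails in $TR_6$.

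Now let $\mathcal V\supseteq\mathsf V(S_{(5,2269)})$. If $TR_6\notin\mathcal V$, then $\mathcal V\subseteq\mathsf V(SR_7)$ by Corollary~\ref{coro26090270}. Otherwise $\mathcal V$ contains $TR_6$ and $S_c(ab)$. In that case I would show $\mathcal V$ also contains $SR_6$, so that $\mathcal V=\mathsf V(SR_6,TR_6)=\mathsf V(ST_7)$.

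\textbf{Main obstacle.} Two claims remain, and I expect these to be the hard step.
\begin{enumerate}
\item[(a)] The interval between $\mathsf V(S_{(5,2269)})$ and $\mathsf V(SR_7)$ has no intermediate members.
\item[(b)] Every $\mathcal V\supseteq\mathsf V(S_{(5,2269)})$ with $TR_6\in\mathcal V$ contains $SR_6$.
\end{enumerate}
For (a), I would mimic Proposition~\ref{prop:exclude-S4369-ST7} and prove a fourth criterion: a subvariety of $\mathsf V(ST_7)$ omits $SR_6$ iff it satisfies \eqref{eq31}. The backward direction: if \eqref{eq31} holds, then Corollary~\ref{prop:define-S5-ST7} places the variety inside $\mathsf V(S_{(5,2269)})$, which omits $SR_6$ because $SR_6$ violates \eqref{eq31}. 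For the forward direction, take a nontrivial inequality $\bq\preceq\bu$ holding in $\mathcal V$ but failing in $SR_6$. Then do a case analysis, as in Proposition~\ref{prop:exclude-TR6-ST7}, using the description of the inequalities of $SR_6$ from \cite{LyuRenYue2026}. In each case, substitute so as to obtain \eqref{eq31} modulo \eqref{eq2}, \eqref{eq1} and \eqref{eq:Cbridge}. Equivalently, one can aim at the stronger inequality $\delta_1$ together with $\tau_m$-type paths.

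Given this criterion, (a) follows. Any $\mathcal V$ with $\mathsf V(S_{(5,2269)})\subseteq\mathcal V\subseteq\mathsf V(SR_7)$ either omits $SR_6$, in which case $\mathcal V\subseteq\mathsf V(S_{(5,2269)})$ and so $\mathcal V=\mathsf V(S_{(5,2269)})$; or it contains $SR_6$ and $S_{(4,369)}$, in which case $\mathcal V=\mathsf V(SR_7)$ by Remark~\ref{remark26083102}. The same criterion gives (b): if $\mathcal V$ omitted $SR_6$, then $\mathcal V$ would satisfy \eqref{eq31} and so lie in $\mathsf V(S_{(5,2269)})$, contradicting $TR_6\in\mathcal V$.

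The delicate cases in the forward direction are those where $\bq$ is a linear word of length two and $\mathbb G_{\bu}$ is bipartite. There the substitution must collapse a bipartite graph onto the path $x_1x_2+x_2x_3+x_3x_4$, as in Cases 2 and 4 of Proposition~\ref{prop:exclude-TR6-ST7}, while keeping the $L_1$-part harmless via \eqref{eq:Cbridge}.
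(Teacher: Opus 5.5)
Parts (1)--(4), and your reduction of (5) to claims (a) and (b), are correct and agree with the paper. The gap is the ``fourth criterion'' on which both (a) and (b) rest: as stated, it is false.

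\textbf{Counterexample.} The variety $\mathsf{V}(TR_6)$ omits $SR_6$, because it satisfies \eqref{eq3} while $SR_6$ does not (take $x=2$, $y=6$; then $xy=3$ and $2+3=1$ in $SR_6$). Yet $TR_6$ violates \eqref{eq31}. Put $x_1=2$, $x_2=6$, $x_3=5$, $x_4=4$; then $x_1x_2+x_2x_3+x_3x_4=3$ but $x_1x_4=1\nleq 3$ (compare Proposition~\ref{Sstar-subvariety}).

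\textbf{Where the sketch fails.} No case analysis of an arbitrary inequality valid in $\mathcal V$ and failing in $SR_6$ can produce \eqref{eq31}. Take $\mathcal V=\mathsf{V}(TR_6)$ and the inequality $x\preceq xy$. Everything you propose to combine it with, namely \eqref{eq2}, \eqref{eq1}, \eqref{eq:Cbridge} and the $\sigma_n$, holds in $TR_6$, while \eqref{eq31} does not. Your analysis breaks exactly in the case where $\bq$ is a single variable and $c(L_1(\bu))\cap c(L_2(\bu))=\emptyset$. Since (b) applies the criterion precisely to a variety containing $TR_6$, the argument as written does not prove (b).

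\textbf{Repair.} Restrict the criterion to subvarieties with $\mathsf{V}(S_{(5,2269)})\subseteq\mathcal V\subseteq\mathsf{V}(ST_7)$, which is the only setting in which you invoke it. An inequality $\bq\preceq\bu$ valid in such $\mathcal V$ but failing in $SR_6$ also holds in $S_c(ab)$ and $S_{(5,2269)}$. The right tool is then Lemma~\ref{lemma26083101}, not the description of the inequalities of $SR_6$.
\begin{itemize}
\item In cases (i), (iii), and (ii) with or without (iv), $\bq\preceq\bu$ follows from \eqref{eq2}, \eqref{eq1}, $\sigma_k$, \eqref{eq:Cbridge} and $\tau_m$. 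All of these hold in $SR_6$ ($\tau_m$ because $SR_6\in\mathsf{V}(SR_7)$), which is a contradiction. Validity in $S_{(5,2269)}$ is what pins down the shape of $\bq$ in the (ii) cases, as in the proof of Proposition~\ref{prop:basis-S5}.
\item In the remaining case, (iv) holds with $c(L_1(\bu))\cap c(L_2(\bu))=\emptyset$. Write $\bq=xy$ with $x\in A$, $y\in B$ for a bipartition $(A,B)$ of $\mathbb{G}_{\bu}$. The substitution $x\mapsto y_1$, $y\mapsto y_4$, $A\setminus\{x\}\mapsto y_3$, $B\setminus\{y\}\mapsto y_2$, all other variables $\mapsto y_2y_3$ turns $\bq\preceq\bu$ into \eqref{eq31}.
\end{itemize}
With this restricted criterion, (a) and (b) follow exactly as you describe.

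\textbf{Comparison with the paper.} For (a), the repaired argument is essentially the paper's. For (b), the paper argues differently. It uses Proposition~\ref{wordprop} to show that a proper subvariety of $\mathsf{V}(ST_7)$ containing $TR_6$ must satisfy \eqref{eq3}, hence equals $\mathsf{V}(TR_6)$, contradicting $S_c(ab)\in\mathcal V$. Your repaired route handles both claims with a single $S_c(ab)$-based analysis.
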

\begin{proof}
We prove the five assertions in order. The first assertion is straightforward.
Assertions (2), (3), and (4) rely on some results established earlier.
The final assertion is the most involved and will be treated in detail.

$(1)$ This is a direct consequence of Proposition~\ref{prop:minimal-ST7}.

$(2)$ It is easy to see that $\mathsf{V}(T_2)$ lies in $\pi^{-1}(\mathsf{V}(T_2))$.
For the converse, let $\mathcal{V}$ be an arbitrary variety in $\pi^{-1}(\mathsf{V}(T_2))$.
Then $\mathcal{V} \cap \mathsf{V}(S_{(5,2269)}) = \mathsf{V}(T_2)$;
in particular, $\mathcal{V}$ contains $\mathsf{V}(T_2)$.
Moreover, by Proposition~\ref{pro06090152259}, $\mathcal{V}$ contains neither $S_{(4,369)}$ nor $S_c(ab)$.
Corollaries~\ref{coro26090226} and~\ref{coro26090225} then imply that
$\mathcal{V}$ is a subvariety of $\mathsf{V}(SR_6) \cap \mathsf{V}(TR_6)$,
which equals $\mathsf{V}(T_2)$ by Corollary~\ref{coro26090215}.
Consequently, $V=\mathsf{V}(T_2)$.
Therefore, $\pi^{-1}(\mathsf{V}(T_2)) = \{\mathsf{V}(T_2)\}$.

$(3)$ First, $\{\mathsf{V}(S_{(4, 369)}),\mathsf{V}(TR_6)\}$ is contained in $\pi^{-1}(\mathsf{V}(S_{(4, 369)}))$,
since $\mathsf{V}(TR_6)$ is a subvariety of $\mathsf{V}(ST_7)$ that contains $\mathsf{V}(S_{(4, 369)})$.
Conversely, if $\mathcal{V}$ is a variety in $\pi^{-1}(\mathsf{V}(S_{(4, 369)}))$,
then $\mathcal{V} \cap \mathsf{V}(S_{(5,2269)}) = \mathsf{V}(S_{(4, 369)})$;
in particular, $\mathcal{V}$ contains $\mathsf{V}(S_{(4, 369)})$.
Moreover, by Proposition~\ref{pro06090152259}, $\mathcal{V}$ does not contain $S_c(ab)$.
Corollary~\ref{coro26090225} then implies that $\mathcal{V}$ is a subvariety of $\mathsf{V}(TR_6)$.
Thus $\mathcal{V}$ is a subvariety of $\mathsf{V}(TR_6)$ that contains $\mathsf{V}(S_{(4, 369)})$.
By Proposition~\ref{subvariety-lattice}, $\mathcal{V}$ is either $\mathsf{V}(TR_6)$ or $\mathsf{V}(S_{(4, 369)})$.
Consequently, $\pi^{-1}(\mathsf{V}(S_{(4, 369)}))=\{\mathsf{V}(S_{(4, 369)}),\mathsf{V}(TR_6)\}$.

$(4)$ Both $\mathsf{V}(S_c(ab))$ and $\mathsf{V}(SR_6)$ lie in $\pi^{-1}(\mathsf{V}(S_c(ab)))$,
since $\mathsf{V}(SR_6)$ is a subvariety of $\mathsf{V}(ST_7)$ containing $\mathsf{V}(S_c(ab))$.
For the converse, let $\mathcal{V}$ be an arbitrary variety in $\pi^{-1}(\mathsf{V}(S_c(ab)))$.
Then $\mathcal{V} \cap \mathsf{V}(S_{(5,2269)}) = \mathsf{V}(S_c(ab))$.
In particular, $\mathcal{V}$ contains $\mathsf{V}(S_c(ab))$.
Moreover, by Proposition~\ref{pro06090152259}, $\mathcal{V}$ does not contain $S_{(4, 369)}$.
Corollary~\ref{coro26090226} then implies that $\mathcal{V}$ is a subvariety of $\mathsf{V}(SR_6)$.
Hence $\mathcal{V}$ is a subvariety of $\mathsf{V}(SR_6)$ that contains $\mathsf{V}(S_c(ab))$.
By Lemma~\ref{lem26090180}, $\mathcal{V}$ is either $\mathsf{V}(SR_6)$ or $\mathsf{V}(S_c(ab))$.
Therefore, $\pi^{-1}(\mathsf{V}(S_c(ab)))=\{\mathsf{V}(S_c(ab)),\mathsf{V}(SR_6)\}$.

$(5)$ First, both $\mathsf{V}(S_{(5,2269)})$ and $\mathsf{V}(ST_7)$
lie in $\pi^{-1}(\mathsf{V}(S_{(5,2269)}))$.
By Remarks~\ref{remark26083101} and~\ref{remark26083102},
together with the fact that $S_c(ab) \in \mathsf{V}(SR_6)$,
we have that $\mathsf{V}(SR_7)$ contains $\mathsf{V}(S_{(5, 2269)})$,
so $\mathsf{V}(SR_7)$ also belongs to $\pi^{-1}(\mathsf{V}(S_{(5,2269)}))$.

For the converse, let $\mathcal{V}$ be an arbitrary variety in $\pi^{-1}(\mathsf{V}(S_{(5,2269)}))$.
Then the intersection $\mathcal{V} \cap \mathsf{V}(S_{(5,2269)})=\mathsf{V}(S_{(5,2269)})$,
and so $\mathcal{V}$ contains $\mathsf{V}(S_{(5,2269)})$.
Hence $\mathcal{V}$ is a subvariety of $\mathsf{V}(ST_7)$ that contains $\mathsf{V}(S_{(5,2269)})$.
If $\mathcal{V}$ is a proper subvariety of $\mathsf{V}(ST_7)$,
then there is a nontrivial inequality $\bq \preceq \bu$ that holds in $\mathcal{V}$ but fails in $\mathsf{V}(ST_7)$.

We claim that $\mathcal{V}$ does not contain $TR_6$.
Suppose, to the contrary, that it does.
Then $TR_6$ satisfies $\bq \preceq \bu$.
By Proposition~\ref{wordprop}, $\bu$ and $\bq$ satisfy one of the following four conditions:
\begin{enumerate}[$(\rm i)$]
\item $\ell(\mathbf{u}_i)\geq3$ for some $\mathbf{u}_i\in\mathbf{u}$;
\item $\mathbf{u}$ contains a non-linear word;
\item the graph $\mathbb{G}_{\mathbf{u}}$ contains an odd cycle;
\item $\ell(\mathbf{q})=1$ and $c(\mathbf{q})\subseteq c(\mathbf{u})$.
\end{enumerate}
If {(\rm i)} or {(\rm ii)} holds, then $\bq \preceq \bu$ is derivable from \eqref{eq2} and \eqref{eq1}, which are satisfied by $ST_7$.
Hence $\bq \preceq \bu$ holds in $ST_7$, contradicting the choice of $\bq \preceq \bu$.
If {(\rm iii)} holds, then $\bq \preceq \bu$ is derivable from some $\sigma_n$, which holds in $ST_7$.
Thus $\bq \preceq \bu$ is satisfied by $ST_7$, again a contradiction.
Thus (iv) must hold, and none of (i), (ii), or (iii) is true.
Therefore, $\ell(\mathbf{q})=1$, $c(\mathbf{q})\subseteq c(\mathbf{u})$,
$\bu=L_1(\bu)+L_2(\bu)$, every word in $L_2(\mathbf u)$ is linear,
and $\mathbb G_{\mathbf u}$ is bipartite.
We write $\bq=z$. Then $z\in c(L_2(\bu))$, and so $zt\in L_2(\bu)$ for some $t \in X$.

We claim that $c(L_1(\mathbf u)) \cap c(L_2(\mathbf u)) = \emptyset$.
Suppose otherwise, and choose $x \in c(L_1(\mathbf u)) \cap c(L_2(\mathbf u))$. Then
\[
\mathbf u \succeq x + xy + zt
 \overset{\eqref{eq:Cbridge}}{\succeq} z = \mathbf q.
\]
Since \eqref{eq:Cbridge} holds in $ST_7$, this would imply that $\bq \preceq \bu$ is satisfied by $ST_7$, contradicting the choice of $\bq \preceq \bu$.
Thus the intersection is empty.

Let $(A, B)$ be a bipartition of $\mathbb G_{\mathbf u}$. Then $z\in A$ or $z\in B$.
Without loss of generality, assume that $z\in A$.
Define a substitution $\varphi: P_f(X_c^+)\to P_f(X_c^+)$ by
\[
\varphi(t)=
\begin{cases}
x,&t\in A,\\
y,&t\in B,\\
xy,&\text{otherwise}.
\end{cases}
\]
(Note that this is well-defined since $c(L_1(\mathbf u)) \cap c(L_2(\mathbf u)) = \emptyset$.)
Then
\[
\varphi(\mathbf u)=\varphi(L_1(\bu))+\varphi(L_2(\bu))=xy+xy=xy
\]
and
$\varphi(\mathbf q)=x$.
Hence $\varphi(\mathbf q)\preceq \varphi(\mathbf u)$ is precisely the identity \eqref{eq3}.
Since $\mathcal V$ satisfies $\mathbf q\preceq\mathbf u$,
it also satisfies $\varphi(\mathbf q) \preceq \varphi(\mathbf u)$, hence \eqref{eq3}.
Corollary~\ref{prop:define-TR6-ST7} then implies that $\mathcal V$ is a subvariety of $\mathsf{V}(TR_6)$.
On the other hand, since $\mathsf{V}(TR_6)$ is a subvariety of $\mathcal V$, it now follows that $\mathcal V = \mathsf{V}(TR_6)$.
But then, by Corollary~\ref{coro26090130},
\[
\pi(\mathcal V)=\mathcal{V}\cap\mathsf{V}(S_{(5,2269)})
 =\mathsf{V}(TR_6)\cap\mathsf{V}(S_{(5,2269)})
 =\mathsf{V}(S_{(4,369)}),
\]
which is not $\mathsf{V}(S_{(5,2269)})$, contradicting $\mathcal V \in \pi^{-1}(\mathsf{V}(S_{(5,2269)}))$.
Therefore, $\mathcal{V}$ does not contain $TR_6$.

By Corollary~\ref{coro26090270}, $\mathcal{V}$ is a subvariety of $\mathsf{V}(SR_7)$.
If $\mathcal{V}$ is a proper subvariety of $\mathsf{V}(SR_7)$,
then there is a
nontrivial inequality $\mathbf q\preceq\mathbf u$ that holds
in $\mathcal V$ but fails in $SR_7$.
Since $\mathcal{V}$ contains $\mathsf{V}(S_{(5,2269)})$, and $\mathsf{V}(S_{(5,2269)})$ contains $S_c(ab)$,
it follows that $\mathbf q\preceq\mathbf u$ is satisfied by both $S_{(5,2269)}$ and $S_c(ab)$.
By Lemma~\ref{lemma26083101}, one of the following conditions holds:
\begin{enumerate}[$(\rm i)$]
\item $\ell(\mathbf u_i)\geq3$ for some $\mathbf u_i\in\mathbf u$;
\item $c(L_1(\mathbf u))\cap c(L_2(\mathbf u))\neq\emptyset$;
\item the graph $\mathbb{G}_{\bu}$ contains an odd cycle;
\item $\mathbf q\in \mathbb{G}_\bu^{odd}$.
\end{enumerate}

If (i) holds, then $\bq \preceq \bu$ is derivable from \eqref{eq2} and \eqref{eq1}, which are satisfied by $SR_7$.
Hence $\bq \preceq \bu$ holds in $SR_7$, contradicting the choice of $\bq \preceq \bu$.
If {(\rm iii)} holds, then $\bq \preceq \bu$ is derivable from some $\sigma_n$, which holds in $SR_7$.
Thus $\bq \preceq \bu$ is satisfied by $SR_7$, again a contradiction.

If (ii) holds while (i), (iii), and (iv) all fail, then, as in the proof of Proposition~\ref{prop:basis-S5},
$\bq \preceq \bu$ is derivable from \eqref{eq:Cbridge}, which holds in $SR_7$.
Hence $\bq \preceq \bu$ is satisfied by $SR_7$.
This again contradicts the choice of $\bq \preceq \bu$.

Now suppose that (ii) and (iv) simultaneously hold, and neither (i) nor (iii) is true.
Then $c(L_1(\mathbf u))\cap c(L_2(\mathbf u))\neq\emptyset$, $\mathbf q\in \mathbb{G}_\bu^{odd}$,
$\bu=L_1(\bu)+L_2(\bu)$, and $\mathbb{G}_{\bu}$ is bipartite; in particular, $\ell(\bq)=2$.
We claim that $\bq$ is linear. If this is not true, then $\bq=x^2$ for some $x\in X$.
Let $(A, B)$ be a bipartition of $\mathbb G_{\mathbf u}$. Then $x\in A$ or $x\in B$.
Without loss of generality, assume that $x\in A$.
Define a semiring homomorphism $\varphi: P_f(X_c^+)\to S_{(5, 2269)}$ by
\[
\varphi(t)=
\begin{cases}
2,&t\in A,\\
4,&t\in B,\\
5,&\text{otherwise}.
\end{cases}
\]
Then $\varphi(\bu)=5$ and $\varphi(\bq)=1$. Since $1\nleq 5$ in $S_{(5, 2269)}$,
it follows that $\varphi(\bq)\nleq \varphi(\bu)$,
which contradicts the fact that $\bq \preceq \bu$ holds in $S_{(5, 2269)}$.
Thus $\bq$ is a linear word of length two.

We write $\bq=xy$ for distinct variables $x, y\in X$.
Since $\bq\in \mathbb{G}_{\bu}^{odd}$, there is an odd path in $\mathbb{G}_{\bu}$ connecting $x$ and $y$.
Hence $L_2(\mathbf u)$ has an additive subterm of the form
\[
x_1x_2+x_2x_3+\cdots+x_{2m+1}x_{2m+2}
\]
for some $m\geq 1$, where $x_1=x$ and $x_{2m+2}=y$.
Since $c(L_1(\mathbf u))\cap c(L_2(\mathbf u))\neq\emptyset$,
we may choose $z\in c(L_1(\mathbf u))\cap c(L_2(\mathbf u))$.
Then $zt\in L_2(\mathbf u)$ for some $t\in X$, and so
\[
\begin{aligned}
\mathbf u
&\succeq z+zt+x_1x_2+x_2x_3+\cdots+x_{2m+1}x_{2m+2}\\
&\overset{\eqref{eq:Cbridge}}{\succeq}
 x_1+x_1x_2+x_2x_3+\cdots+x_{2m+1}x_{2m+2}\\
&\overset{\tau_m}{\succeq}\mathbf q.
\end{aligned}
\]
This shows that $\bq \preceq \bu$ is derivable from \eqref{eq:Cbridge} and $\tau_m$,
both of which hold in $SR_7$. Hence $\bq \preceq \bu$ is satisfied by $SR_7$,
contradicting the choice of $\bq \preceq \bu$.

Finally, (iv) holds, and none of (i), (ii), or (iii) is true.
Then $\mathbf q\in \mathbb{G}_\bu^{odd}$, $\bu=L_1(\bu)+L_2(\bu)$, $c(L_1(\mathbf u))\cap c(L_2(\mathbf u))=\emptyset$,
and $\mathbb{G}_{\bu}$ is bipartite.
Hence $\bq$ is linear.
We write $\bq=xy$ for distinct variables $x, y\in X$.
Since $\bq\in \mathbb{G}_{\bu}^{odd}$, there exists an odd path in $\mathbb{G}_{\bu}$ connecting $x$ and $y$.
Hence $L_2(\mathbf u)$ has an additive subterm of the form
\[
x_1x_2+x_2x_3+\cdots+x_{2m+1}x_{2m+2}
\]
for some $m\geq 1$, where $x_1=x$ and $x_{2m+2}=y$.
Let $(A, B)$ be a bipartition of $\mathbb G_{\mathbf u}$. Then either $x\in A$, $y\in B$ or $x\in B$, $y\in A$.
Without loss of generality, assume that $x\in A$ and $y\in B$.
Define a substitution
$\psi:P_f(X_c^+)\to P_f(X_c^+)$ by
\[
\psi(t)=
\begin{cases}
y_1,&t=x_1,\\
y_4,&t=x_{2m+2},\\
y_3,&t\in A\setminus\{x_1\},\\
y_2,&t\in B\setminus\{x_{2m+2}\},\\
y_2y_3,&\text{otherwise}.
\end{cases}
\]
Then $\psi(\mathbf u)=y_1y_2+y_2y_3+y_3y_4$ and $\psi(\mathbf q)=y_1y_4$.
Hence $\psi(\mathbf q)\preceq \psi(\mathbf u)$ is precisely the identity \eqref{eq31}.
Since $\mathcal V$ satisfies $\mathbf q\preceq\mathbf u$,
it also satisfies $\psi(\mathbf q) \preceq \psi(\mathbf u)$, hence \eqref{eq31}.
Corollary~\ref{prop:define-S5-ST7} then implies that $\mathcal V$ is a subvariety of $\mathsf{V}(S_{(5, 2269)})$.
Since the reverse inclusion is already known, we obtain $\mathcal V = \mathsf{V}(S_{(5,2269)})$.
Therefore, $\pi^{-1}(\mathsf{V}(S_{(5,2269)}))
 =\{\mathsf{V}(S_{(5,2269)}),\mathsf{V}(SR_7),
 \mathsf{V}(ST_7)\}$.
\end{proof}

We now state the main theorem of this section.

\begin{theorem}\label{thm:ST7-lattice}
The subvariety lattice of $\mathsf{V}(ST_7)$ consists of exactly nine varieties:
$\mathsf{V}(ST_7)$, $\mathsf{V}(TR_6)$, $\mathsf{V}(SR_7)$, $\mathsf{V}(SR_6)$, $\mathsf{V}(S_{(5,2269)})$,
$\mathsf{V}(S_{(4,369)})$, $\mathsf{V}(S_c(ab))$, $\mathsf{V}(T_2)$, and $\mathbf T$.
Among these, $\mathsf{V}(ST_7)$, $\mathsf{V}(TR_6)$, $\mathsf{V}(SR_7)$, and $\mathsf{V}(SR_6)$ are nonfinitely based;
the remaining five are finitely based.
Its Hasse diagram is shown in Figure~\ref{fig:ST7-lattice}.

\begin{figure}[ht]
\centering
\begin{tikzpicture}[
scale=0.6,
  transform shape,
  x=1cm,
  y=1cm,
  cover/.style={draw=black!90,line width=0.95pt,line cap=round,line join=round},
  vertex/.style={circle,fill=white,draw=black,minimum size=7pt,inner sep=0pt,outer sep=0pt},
  lab/.style={font=\normalsize,inner sep=1pt,outer sep=0pt}
]
% Revised according to the user's sketch:
% - the three side segments are kept vertical/parallel;
% - N7 is placed exactly at the midpoint of the segment from ST7 to S_(5,2269).
\coordinate (ST7)    at ( 0.0, 8.8);
\coordinate (S52269) at ( 0.0, 4.3);
\coordinate (N7)     at ($(ST7)!0.5!(S52269)$);
\coordinate (TR6)    at (-3.0, 6.55);
\coordinate (S4369)  at (-3.0, 2.6);
\coordinate (SR6)    at ( 3.0, 4.75);
\coordinate (Scab)   at ( 3.0, 2.6);
\coordinate (T2)     at ( 0.0, 0.9);
\coordinate (T)      at ( 0.0,-2.1);

% Cover relations.
\draw[cover] (ST7)    -- (TR6);
\draw[cover] (ST7)    -- (N7);
\draw[cover] (TR6)    -- (S4369);
\draw[cover] (N7)     -- (S52269);
\draw[cover] (N7)     -- (SR6);
\draw[cover] (S4369)  -- (S52269);
\draw[cover] (S52269) -- (Scab);
\draw[cover] (SR6)    -- (Scab);
\draw[cover] (S4369)  -- (T2);
\draw[cover] (Scab)   -- (T2);
\draw[cover] (T2)     -- (T);

% Vertices.
\node[vertex] at (ST7)    {};
\node[vertex] at (TR6)    {};
\node[vertex] at (N7)     {};
\node[vertex] at (S52269) {};
\node[vertex] at (SR6)    {};
\node[vertex] at (S4369)  {};
\node[vertex] at (Scab)   {};
\node[vertex] at (T2)     {};
\node[vertex] at (T)      {};

% Labels placed in open regions.
\node[lab, anchor=south] at ($(ST7)+(0,0.42)$) {$\mathsf{V}(ST_7)$};
\node[lab, anchor=east]  at ($(TR6)+(-0.38,0.02)$) {$\mathsf{V}(TR_6)$};
\node[lab, anchor=west]  at ($(N7)+(0.36,0.10)$) {$\mathsf{V}(SR_7)$};
\node[lab, anchor=west]  at ($(SR6)+(0.36,0.00)$) {$\mathsf{V}(SR_6)$};
\node[lab, anchor=east]  at ($(S4369)+(-0.40,0.02)$) {$\mathsf{V}(S_{(4,369)})$};
\node[lab, anchor=west]  at ($(Scab)+(0.38,0.02)$) {$\mathsf{V}(S_c(ab))$};
\node[lab, anchor=west]  at ($(S52269)+(0.36,0.06)$) {$\mathsf{V}(S_{(5,2269)})$};
\node[lab, anchor=east]  at ($(T2)+(-0.34,0.00)$) {$\mathsf{V}(T_2)$};
\node[lab, anchor=north] at ($(T)+(0,-0.36)$) {$\mathbf{T}$};
\end{tikzpicture}
\caption{The subvariety lattice of $\mathsf{V}(ST_7)$.}
\label{fig:ST7-lattice}
\end{figure}
\end{theorem}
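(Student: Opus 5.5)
The plan is to assemble the theorem from the fibre decomposition already established. The map $\pi$ in \eqref{mp26090188} sends every subvariety of $\mathsf{V}(ST_7)$ into the five-element lattice $\mathcal{L}(\mathsf{V}(S_{(5,2269)}))$ of Proposition~\ref{pro06090152259}. Hence $\mathcal{L}(\mathsf{V}(ST_7))$ is the disjoint union of the five preimages. Proposition~\ref{prop:ST7-fibres} lists these preimages explicitly, and their union is exactly the nine varieties in the statement. It remains to note that these nine are pairwise distinct. They lie in different fibres or, within a fibre, are separated by identities already used: \eqref{eq3} fails in $SR_6$, \eqref{eq:Cpath} fails in $TR_6$, and $\sigma_n$ and \eqref{eq31} are independent of the other bases. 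Distinctness also follows from the finite basis status proved below together with the fibre computation.

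For the Hasse diagram, I will read off the inclusions from the results already proved.
\begin{itemize}
\item $\mathsf{V}(T_2)\subseteq \mathsf{V}(S_{(4,369)})\subseteq\mathsf{V}(TR_6)$ holds by Proposition~\ref{subvariety-lattice}.
\item $\mathsf{V}(T_2)\subseteq\mathsf{V}(S_c(ab))\subseteq\mathsf{V}(SR_6)$ holds by Lemma~\ref{lem26090180}.
\item $\mathsf{V}(S_{(5,2269)})$ is the join of $\mathsf{V}(S_c(ab))$ and $\mathsf{V}(S_{(4,369)})$ by Remark~\ref{remark26083101}.
\item $\mathsf{V}(SR_7)$ is the join of $\mathsf{V}(SR_6)$ and $\mathsf{V}(S_{(4,369)})$ by Remark~\ref{remark26083102}, and it contains $\mathsf{V}(S_{(5,2269)})$.
\item $\mathsf{V}(ST_7)$ is the join of $\mathsf{V}(SR_6)$ and $\mathsf{V}(TR_6)$ by Proposition~\ref{prop:C7-join}.
\end{itemize}

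To check that the drawn edges are covers and that no other comparabilities exist, I will use the exclusion criteria.
\begin{itemize}
\item $\mathsf{V}(TR_6)$ and $\mathsf{V}(SR_7)$ are incomparable by Corollaries~\ref{coro26090225} and \ref{coro26090270}.
\item $\mathsf{V}(SR_6)$ does not contain $S_{(4,369)}$ by Corollary~\ref{coro26090226}, so it is incomparable with $\mathsf{V}(S_{(4,369)})$, $\mathsf{V}(S_{(5,2269)})$ and $\mathsf{V}(TR_6)$.
\end{itemize}
Since the lattice is finite and all elements are known, checking covers reduces to confirming that no listed variety lies strictly between the endpoints of each edge. This is immediate from the fibres.

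For the finite basis claims, nonfinite basedness comes from four earlier results: Corollary~\ref{cor:C7-nfb} for $\mathsf{V}(ST_7)$, Proposition~\ref{nfb} for $\mathsf{V}(TR_6)$, Corollary~\ref{cor:N7-nfb} for $\mathsf{V}(SR_7)$, and Lemma~\ref{lem26090180} (a limit variety is nonfinitely based) for $\mathsf{V}(SR_6)$. Finite basedness of the other five is also already available.
\begin{itemize}
\item $\mathbf T$ is trivially finitely based.
\item $\mathsf{V}(T_2)$ is defined by \eqref{eq50} and \eqref{eq51}.
\item $\mathsf{V}(S_{(4,369)})$ is finitely based by Lemma~\ref{lemma26082720}.
\item $\mathsf{V}(S_{(5,2269)})$ is finitely based by Proposition~\ref{prop:basis-S5}.
\item $\mathsf{V}(S_c(ab))$ is a proper subvariety of the limit variety $\mathsf{V}(SR_6)$, so it is finitely based by Lemma~\ref{lem26090180}.
\end{itemize}

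I expect no real obstacle, since the hard work is in Proposition~\ref{prop:ST7-fibres}. The only step needing care is the verification of the Hasse diagram, in particular that $\mathsf{V}(SR_7)$ covers both $\mathsf{V}(SR_6)$ and $\mathsf{V}(S_{(5,2269)})$. I will handle this by observing that any variety strictly between them would have to lie in one of the listed fibres, and none does.
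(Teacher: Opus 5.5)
Your proposal is correct and matches the paper's proof: the paper also reads the nine varieties off the fibres of $\pi$ in Proposition~\ref{prop:ST7-fibres}, and takes the finite basis status from Lemma~\ref{lem26090180}, Theorem~\ref{limit-variety}, Proposition~\ref{prop:basis-S5}, and Corollaries~\ref{cor:N7-nfb} and~\ref{cor:C7-nfb}; it leaves implicit the distinctness and Hasse-diagram checks that you spell out. One small slip: not containing $S_{(4,369)}$ only shows that $\mathsf{V}(SR_6)$ lies above none of $\mathsf{V}(S_{(4,369)})$, $\mathsf{V}(S_{(5,2269)})$, $\mathsf{V}(TR_6)$, so for the reverse non-inclusions you should add that \eqref{eq3} fails in $SR_6$, and that $\mathsf{V}(SR_6)$ is nonfinitely based while every subvariety of $\mathsf{V}(S_{(5,2269)})$ is finitely based.
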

\begin{proof}
This follows from Proposition~\ref{prop:ST7-fibres}, Lemma~\ref{lem26090180}, Theorem~\ref{limit-variety}, Proposition~\ref{prop:basis-S5},  and Corollaries~\ref{cor:N7-nfb} and~\ref{cor:C7-nfb}.
\end{proof}

%\begin{remark}\label{rem:ST7-lattice-shape}
%The lattice $\operatorname{Sub}(\mathsf{V}(ST_7))$ is not distributive.
%Indeed,
%\[
%\{\mathsf{V}(S_{(4,369)}),\mathsf{V}(S_{(5,2269)}),\mathsf{V}(SR_7),
%\mathsf{V}(TR_6),\mathsf{V}(ST_7)\}
%\]
%is a sublattice isomorphic to the pentagon lattice $N_5$. Moreover, the
%interval
%\[
%[\mathsf{V}(T_2),\mathsf{V}(S_{(5,2269)})]
%=\{\mathsf{V}(T_2),\mathsf{V}(S_{(4,369)}),
%\mathsf{V}(S_c(ab)),\mathsf{V}(S_{(5,2269)})\}
%\]
%is a four-element Boolean lattice, that is, a Boolean square.
%\end{remark}

\begin{corollary}
The varieties $\mathsf{V}(SR_6)$ and $\mathsf{V}(TR_6)$ are the only limit subvarieties of $\mathsf{V}(ST_7)$.
\end{corollary}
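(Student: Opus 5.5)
The plan is to read the corollary directly off Theorem~\ref{thm:ST7-lattice}, which lists all nine subvarieties of $\mathsf{V}(ST_7)$ and their finite basis status. Recall that a limit variety is a nonfinitely based variety all of whose proper subvarieties are finitely based. So the candidates are only the four nonfinitely based members $\mathsf{V}(ST_7)$, $\mathsf{V}(SR_7)$, $\mathsf{V}(TR_6)$, $\mathsf{V}(SR_6)$. For each one I will check whether it has a nonfinitely based proper subvariety.

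First, $\mathsf{V}(TR_6)$ is a limit variety by Theorem~\ref{limit-variety}, and $\mathsf{V}(SR_6)$ is a limit variety by Lemma~\ref{lem26090180}. This also follows from the lattice: the subvarieties of $\mathsf{V}(TR_6)$ form the chain of Proposition~\ref{subvariety-lattice}, and those of $\mathsf{V}(SR_6)$ are $\mathsf{V}(S_c(ab))$, $\mathsf{V}(T_2)$, $\mathbf T$; all of these are finitely based by Theorem~\ref{thm:ST7-lattice}.

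Next I exclude the other two candidates. $\mathsf{V}(ST_7)$ properly contains $\mathsf{V}(TR_6)$, which is nonfinitely based, so it is not a limit variety. For $\mathsf{V}(SR_7)$, I will show that it properly contains $\mathsf{V}(SR_6)$. By Remark~\ref{remark26083102}, $\mathsf{V}(SR_7)=\mathsf{V}(SR_6,S_{(4,369)})$, so it contains $\mathsf{V}(SR_6)$. The containment is proper, since $S_{(4,369)}$ fails \eqref{id2608090150} while $SR_6$ satisfies it; equivalently, in the Hasse diagram $\mathsf{V}(SR_6)$ lies strictly below $\mathsf{V}(SR_7)$. Hence $\mathsf{V}(SR_7)$ has a nonfinitely based proper subvariety and is not a limit variety.

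No step is a real obstacle. The only point needing care is the proper inclusion $\mathsf{V}(SR_6)\subsetneq\mathsf{V}(SR_7)$, and the remark above settles it.
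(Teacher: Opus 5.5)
Your proposal is correct and takes the same approach as the paper, which simply states that the corollary is a direct consequence of Theorem~\ref{thm:ST7-lattice}. You spell out the omitted check: among the four nonfinitely based members, $\mathsf{V}(ST_7)$ and $\mathsf{V}(SR_7)$ are excluded because they properly contain $\mathsf{V}(TR_6)$ and $\mathsf{V}(SR_6)$ respectively, while $\mathsf{V}(TR_6)$ and $\mathsf{V}(SR_6)$ have only finitely based proper subvarieties.
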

\begin{proof}
This is a direct consequence of Theorem~\ref{thm:ST7-lattice}.
\end{proof}

\section{Conclusion}
We have shown that, for finite ai-semirings, being nonfinitely based does not imply being SNFB:
the ai-semirings $SR_6$ and $TR_6$ are the first two counterexamples.
It remains an open problem to determine, in general, when a finite nonfinitely based ai-semiring is SNFB.
We have also completely classified the subvarieties of the larger variety $\mathsf{V}(SR_6, TR_6)$, showing that it has exactly nine subvarieties, four of which are nonfinitely based and the remaining five finitely based.

From the proof of Theorem~\ref{thm:MR6}, $TR_6$ lies in $\mathsf{V}(S_{53})$.
Now let $I$ denote the set $\{k \in \mathbf{N} \mid k \geq 2\}$.
Then $I$ is an additive order filter and a multiplicative ideal of $\mathbf{N}$,
and so the corresponding Rees congruence $\rho_{_I}$ is a semiring congruence on $\mathbf{N}$.
It is easily verified that the quotient algebra $\mathbf{N}/\rho_{_I}$ is isomorphic to $S_{53}$.
Consequently, $S_{53}$ belongs to $\mathsf{V}(\mathbf{N})$.
Therefore, $\mathsf{V}(TR_6)$ is a limit subvariety of $\mathsf{V}(\mathbf{N})$.

The explicit examples of limit varieties of ai-semirings previously known from the literature are of several types.
Ren et al.~\cite{RenJacksonZhaoLei2023} constructed two types:
one consists of varieties generated by flat extensions of certain groups,
and the other is the variety $\mathsf{V}(S_c(abc))$, where $S_c(abc)$ is a certain eight-element ai-semiring.
Gao and Ren~\cite{GaoRen2025} gave another such example, which is not finitely generated.
Lyu et al.~\cite{LyuRenYue2026} recently showed that $\mathsf{V}(SR_6)$ is a limit variety.
None of these previously known examples lies in $\mathsf{V}(\mathbf{N})$.
Indeed, they all fail the inequality $x+y\preceq xy$, which is satisfied by both $\mathbf{N}$ and $TR_6$.

Thus $\mathsf{V}(TR_6)$ is a new limit variety of ai-semirings, and
provides the first explicit limit subvariety of $\mathsf{V}(\mathbf{N})$, addressing an open direction raised by Ren et al.~\cite[Section~6]{RenJacksonZhaoLei2023}:
``Understanding further limit varieties remains of interest. In this regard it would be
particularly interesting to explore subvarieties of the variety generated by the max-plus
algebra, toward possible explicit identification of a limit subvariety.''
At present, it is not known whether $\mathsf{V}(\mathbf{N})$ has any other limit subvarieties.

Since $SR_6$ and $TR_6$ share the same multiplicative reduct, it is natural to ask how many additions can be defined on this semigroup so that the resulting algebras become ai-semirings.
In fact, there are six such additions; the present paper has investigated two of them, namely $SR_6$ and $TR_6$.
The finite basis problem and subvariety lattices of the varieties generated by the remaining four algebras await further investigation,
and we hope to address them in future work.

\subsection*{Acknowledgment}
The authors would like to thank Mengya Yue for her helpful discussions related to this work. 
They also thank Professor Marcel Jackson for his valuable comments on an earlier version of the paper.
Miaomiao Ren, corresponding author, is supported by National Natural Science Foundation of China (12371024, 12571020).

\end{document}